\newbox\tr@tto
\def\medint{\displaystyle\copy\tr@tto\kern-10.4pt\int}
\def\Xint#1{\mathchoice
   {\XXint\displaystyle\textstyle{#1}}%
   {\XXint\textstyle\scriptstyle{#1}}%
   {\XXint\scriptstyle\scriptscriptstyle{#1}}%
   {\XXint\scriptscriptstyle\scriptscriptstyle{#1}}%
   \!\int}
\def\XXint#1#2#3{{\setbox0=\hbox{$#1{#2#3}{\int}$}
     \vcenter{\hbox{$#2#3$}}\kern-.5\wd0}}
\def\dashint{\Xint-}
\newcommand{\R}{{\mathbb R}}
\newcommand{\Le}{{\mathcal L}}
\newcommand{\E}{\mathop{\rm Ent}}
\newcommand{\M}{{\mathcal  M}}
\renewcommand{\H}{{\mathcal H}}
\newcommand{\rank}{{\mathrm{rank}}}
\newcommand{\supp}{\mathop{\rm supp}}
\def\ve{\varepsilon}
\newcommand{\loc}{{\rm loc}}
\newcommand{\esssup}{\mathop{\rm ess\,sup}}
\newcommand{\diam}{\mathop{\rm diam}}
\newcommand{\dist}{\mathop{\rm dist}}
\newcommand{\LL}{\mathrm{L}}
\newcommand{\WW}{\mathrm{W}}
\newcommand{\CC}{\mathrm{C}}
\newcommand{\BV}{\mathrm{BV}}
\newtheorem{lem}{Lemma}[section]
\newtheorem{ttt}[lem]{Theorem}
\newtheorem{cor}[lem]{Corollary}
\theoremstyle{definition}
\newtheorem{rem}[lem]{Remark}
\newtheorem{df}[lem]{Definition}
\newcommand{\dd}{{\rm d}}
\title{ON THE MORSE--SARD PROPERTY AND LEVEL SETS OF $\WW^{n,1}$
SOBOLEV FUNCTIONS ON $\R^n$}
\author{Jean Bourgain, Mikhail V.~Korobkov\footnote{The author was
supported by the Russian Foundation for Basic Research (project
No.~12-01-00390-a) and by the Integration Project SB-FEB RAS
(project No.~56). } \, and Jan Kristensen\footnote{Work supported
by the EPSRC Science and Innovation award to the Oxford Centre for
Nonlinear PDE (EP/E035027/1).}}
\date{}
\begin{document}

\maketitle

%
%

\begin{abstract}
We establish Luzin $N$ and Morse--Sard properties for functions
from the Sobolev space $\WW^{n,1}(\R^n)$. Using these results we
prove that almost all level sets are finite disjoint unions of
$\CC^1$-smooth compact manifolds of dimension $n-1$. These results
remain valid also within the larger space of functions of bounded
variation $\BV_n(\R^n)$. For the proofs we establish and use some
new results on Luzin--type approximation of Sobolev and
$\BV$--functions by $\CC^k$--functions, where the exceptional sets
have small Hausdorff content.

\medskip
\noindent {\bf Key words:} {\it $\BV_n$ and
$\WW^{n,1}$--functions, Luzin $N$--property, Morse--Sard property,
level sets, approximation by smooth functions.}
\end{abstract}

\section*{Introduction}

The starting point of the paper is the following classical result
(see also \cite{Fed} for more general expositions):
\bigskip

\noindent {\bf Theorem (Morse-Sard, 1942, \cite{Mo}, \cite{S})}.
{\sl Let
 $f \colon \R^n\to\R^m$ be a
$\CC^k$--smooth mapping with $k\ge\max(n-m+1,1)$. Then
\begin{equation}\label{t1}
\Le^m(f(Z_f))=0, \end{equation} where  $\Le^m$ denotes the
$m$-dimensional Lebesgue measure and $Z_f$ denotes the set of
critical points of $f$:\ \ $Z_f=\{x\in\R^n\,:\,\rank \nabla
f(x)<m\}$.}
\bigskip

The order of smoothness in the assumptions of this theorem is
sharp on the scale $\CC^j$ (see, e.g., \cite{Wh}). However, some
analogs of the Morse--Sard theorem remain valid for functions
lacking the required smoothness in the classical theorem.
Although~(\ref{t1}) may be no longer valid then,
Dubovitski\u{\i}~\cite{Du} obtained some results on the structure
of level sets in the case of reduced smoothness (also
see~\cite{B}).

Another direction of the research was the generalization of the
Morse--Sard theorem to functions in more refined scales of spaces,
and especially in H\"{o}lder and Sobolev spaces (for example,
see~\cite{Bates,B,DeP,Fig,Nor}). In particular, De~Pascale
(\cite{DeP}, see also \cite{Fig}) proved that~(\ref{t1}) holds
when $f\in \WW^{k,p}(\R^n,\R^m)$ with $p>n$, $k\ge\max(n-m+1,2)$.
Note that in this case $v$ is $\CC^1$--smooth by virtue of the
Sobolev imbedding theorem, and so the critical set is defined as
usual.

For a historical review for the plane case $n=2,m=1$ see for
instance \cite{BKK}. We mention only the paper~\cite{PZ} where it
was proved that~(\ref{t1}) holds for Lipschitz functions $f$ of
class $\BV_{2}(\R^2)$, where $\BV_{2}(\R^2)$ is the space of
functions $f\in \LL^{1}(\R^2)$ such that all its partial
(distributional) derivatives of the second order are $\R$-valued
Radon measures on $\R^2$.

In this paper we  consider the case of $\R$-valued Sobolev
functions $v\in \WW^{n,1}(\R^n)$. It is known (see, e.g.,
\cite{Dor}) that such a function  admits a continuous
representative which is \linebreak(Fr\'{e}chet--)differentiable
$\H^1$--almost everywhere. The critical set $Z_v$ is defined as
the set of points~$x$, where $v$ is differentiable with total
(Fr\'{e}chet--)differential $v'(x)=0$. As our  main result we
prove that $\Le^1(v(Z_v))=0$ (see Theorem~\ref{MS}).

Also we show that for any $v\in \WW^{n,1}(\R^n)$ and $\varepsilon
>0$ there exists $\delta >0$ such that for all subsets
$E\subset\R^n$ with $\H_\infty^1(E) < \delta$ we have
$\Le^1(v(E))< \varepsilon$, where $\H^1_\infty$ is the Hausdorff
content. In particular, it follows that $\Le^1(v(E))=0$ whenever
$\H^1(E)=0$ (see Theorem~\ref{th3.3}). So the image of the
exceptional ``bad'' set, where the differential is not defined,
has zero Lebesgue measure. This ties nicely with our definition of
the critical set and our version of the Morse--Sard result.

Finally, using these results we prove that almost all level sets
of $\WW^{n,1}$--functions defined on $\R^n$, are finite disjoint
unions of $\CC^1$--smooth compact manifolds of dimension $n-1$
without boundary (see Theorem~\ref{Th2.1}\,).

The proof of the last result relies in turn on new Luzin--type
approximation results for $\WW^{l,1}$ Sobolev functions by
$\CC^k$--functions, $k \leq l$, where the exceptional sets are of
small Hausdorff content (see Theorem~\ref{Th_ap}\,). The $\LL^p$
analogs of such results are well-known when $p>1$, see, e.g.,
\cite{Boj}, \cite{Ziem}, \cite{Tai}, where Bessel and Riesz
capacities are used instead of Hausdorff content. In fact, the
exceptional set can be precisely characterized in terms of the
Bessel and Riesz capacities when $f\in\WW^{l,p}(\R^n)$ and
$p>1$.

We extend our results also to the space $\BV_n(\R^n)$ consisting
of functions $v\in \LL^{1}(\R^n)$ such that all its partial
(distributional) derivatives of the $n$-th order are $\R$-valued
Radon measures on $\R^n$ (see Section~\ref{bvs}).

For the plane case $n=2$ these results were obtained in~\cite{BKK}
by
different methods that do not easily extend to the
multidimensional case
$n > 2$ that is the main focus here.

Our proofs rely on the results of \cite{M} on advanced
versions of Sobolev imbedding theorems (see Theorem~\ref{lb2}), of
\cite{Ad} on Choquet integrals of Hardy-Littlewood maximal
functions  with respect to Hausdorff content (see
Theorem~\ref{lb7}), and of \cite{Yom} on the entropy estimate of
near--critical values of differentiable functions (see
Theorem~\ref{lb8}). The key step in the proof of the assertion of
the Morse--Sard Theorem is contained in Lemma~\ref{lb11}.

\section{Preliminaries}

\noindent By  an {\it $n$-dimensional interval} we mean a closed
cube $I=[a,b]^{n}\subset\R^n$ with sides parallel to the
coordinate axis. Furthermore we write $\ell (I) = b-a$ for its
sidelength.

We denote by $\Le^n(F)$ the outer Lebesgue measure of a set
$F\subset\R^n$. Denote by $\H^k$, $\H^{k}_{\infty}$ the
$k$--dimensional Hausdorff measure, Hausdorff content,
respectively: for any $F \subset \R^n$,
$\H^k(F)=\lim\limits_{\alpha\searrow 0}\H^k_\alpha(F)=
\sup_{\alpha >0} \H^{k}_{\alpha}(F)$, where for each $0< \alpha
\leq \infty$,
$$
\H^k_\alpha(F)=\inf\bigl\{ \sum_{i=1}^\infty(\diam F_i)^k\ :\
\diam F_i\le\alpha,\ \ F \subset\bigcup\limits_{i=1}^\infty
F_i\bigr\}.
$$
It is well known that $\H^n(F)\sim\H^n_\infty(F)\sim\Le^n(F)$ for
sets~$F\subset\R^n$.

To simplify the notation, we write $\|f\|_{\LL^1}$ instead of
$\|f\|_{\LL^1(\R^n)}$, etc.

The space $\WW^{k,1} (\R^n)$ is as usual defined as consisting of
those functions $f\in \LL^1(\Omega)$ whose distributional partial
derivatives of order $l\le k$ belongs to $\LL^1(\R^n)$ (for
detailed definitions and differentiability properties of such
functions see, e.g., \cite{EG}, \cite{Ziem}, \cite{Dor}). Denote
by $\nabla^k f$ the vector-valued function consisting of all
$k$-th order partial derivatives of $f$ arranged in some fixed
order.
We use the norm
$$
\|f\|_{\WW^{k,1}}=\|f\|_{\LL^1}+\|\nabla
f\|_{\LL^1}+\dots+\|\nabla^kf\|_{\LL^1}.
$$

Working with Sobolev functions we always assume that the precise
representatives are chosen. If $w\in L^1_{\loc}(\Omega)$, then the
precise representative $w^*$ is defined by
\begin{equation}
\label{lrule}w^*(x)=\left\{\begin{array}{rcl} {\displaystyle
\lim\limits_{r\to
0} \dashint_{B(x,r)}{w}(z)\,\dd z}, & \mbox{ if the limit exists
and is finite,}\\
 0 \qquad\qquad\quad & \; \mbox{ otherwise },
\end{array}\right.
\end{equation}
where the dashed integral as usual denotes the integral mean,
$$
\dashint_{B(x,r)}{ w}(z)dz=\frac{1}{\Le^n(B(x,r))}\int_{B(x,r)}{
w}(z)\,\dd z,
$$
and $B(x,r)=\{y: |y-x|<r\}$ is the open ball of radius $r$
centered at $x$.

The following well-known assertion follows immediately from the
definition of Sobolev spaces.

\begin{lem}\label{lemc3}{\sl
Let $f\in\WW^{l,1}(\R^n)$. Then for any $\varepsilon >0$ there
exist functions $f_0\in C_0^\infty(\R^n)$,
$f_1\in\WW^{l,1}(\R^n)$, such that $f = f_0 +f_1$ \ and \
$\|f_1\|_{\WW^{l,1}}<\varepsilon$.}
\end{lem}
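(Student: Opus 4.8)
The plan is to reduce to the density of smooth functions in $\WW^{l,1}(\R^n)$. First I would recall that $\CC_0^\infty(\R^n)$ is dense in $\WW^{l,1}(\R^n)$ with respect to the norm $\|\cdot\|_{\WW^{l,1}}$; this is the standard mollification-and-truncation argument (convolve $f$ with a smooth approximate identity to get smoothness, then multiply by a cutoff $\varphi_R$ equal to $1$ on $B(0,R)$ to get compact support, letting $R\to\infty$). Thus, given $f\in\WW^{l,1}(\R^n)$ and $\varepsilon>0$, I would pick $f_0\in\CC_0^\infty(\R^n)$ with $\|f-f_0\|_{\WW^{l,1}}<\varepsilon$.

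Then I would simply set $f_1 := f - f_0$. Clearly $f = f_0 + f_1$, and since $f\in\WW^{l,1}(\R^n)$ and $f_0\in\CC_0^\infty(\R^n)\subset\WW^{l,1}(\R^n)$, linearity of the space gives $f_1\in\WW^{l,1}(\R^n)$. The required bound $\|f_1\|_{\WW^{l,1}}<\varepsilon$ is exactly the approximation estimate. This completes the proof.

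I do not expect any genuine obstacle here — the statement is essentially a restatement of density of test functions in the Sobolev space, and the only point requiring care is the standard verification that mollification converges in each of the $\LL^1$-norms of the derivatives $\nabla^j f$ for $0\le j\le l$ (which follows since $\nabla^j(f*\rho_\delta) = (\nabla^j f)*\rho_\delta \to \nabla^j f$ in $\LL^1$) and that the truncation error $\nabla^j((1-\varphi_R)f)$ tends to $0$ in $\LL^1$ as $R\to\infty$ (which follows from the Leibniz rule together with dominated convergence and the integrability of all derivatives up to order $l$). If one prefers to avoid even this, one may instead invoke the cited references (\cite{EG}, \cite{Ziem}) where density of $\CC_0^\infty$ in $\WW^{l,1}(\R^n)$ is proved, and the lemma is then immediate.
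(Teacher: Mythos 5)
Your proof is correct and is exactly the standard density argument the paper has in mind; indeed the paper states the lemma without proof, remarking only that it "follows immediately from the definition of Sobolev spaces." Setting $f_1 := f - f_0$ for a test-function approximant $f_0$ is precisely the intended reading.
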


We need a version of the Sobolev Embedding Theorem that gives
inclusions in Lebesgue spaces with respect to suitably general
positive
measures. Very general and precise statements are known, but here
we
restrict attention to the following class of measures:

\begin{df}
\label{db1} Let $\mu$ be a positive measure on $\mathbb R^n$. We
say that $\mu$ has property $(*-l)$ for some $l\le n$,  if
\begin{equation}
\label{cb2} \mu(I) \leq (\ell(I))^{(n-l)}
\end{equation}
for any $n$-dimensional interval $I\subset\mathbb R^n$.
\end{df}

\begin{ttt}[see \cite{M}, \S1.4.3]
\label{lb2} {\sl If $f\in\WW^{l,1}(\R^n)$ and $\mu$ has property
$(*-l)$, then
\begin{equation}
\label{cb3} \int |f| d\mu \leq C\Vert \nabla^lf\Vert_{\LL^1},
\end{equation}
where $C$ does not depend on $\mu,\ f$. }
\end{ttt}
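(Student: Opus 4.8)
The plan is to reduce the estimate \eqref{cb3} to the classical Sobolev--Poincar\'e type inequalities on cubes together with a Whitney-type decomposition of $\R^n$ adapted to the measure $\mu$. Since the statement is attributed to Maz'ya's book, I would expect the argument to go through potential estimates, but let me sketch a self-contained route. First I would use a partition-of-unity / dyadic decomposition: write $\R^n$ as a union of dyadic cubes and, on each cube $Q$, control $\int_Q |f|\,d\mu$ by $\mu(Q)$ times the average of $|f|$ over $Q$ plus an oscillation term involving $\nabla f$; iterating $l$ times brings in $\nabla^l f$. The hypothesis $\mu(I)\le \ell(I)^{n-l}$ is exactly the growth condition needed so that, when one sums the local contributions against the Sobolev embedding $\WW^{l,1}(Q)\hookrightarrow \LL^{n/(n-l)}(Q)$ (or its iterated/limiting form), the measure-theoretic weights telescope into a single $\|\nabla^l f\|_{\LL^1(\R^n)}$ with a dimensional constant.

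More concretely, the key analytic input is the representation of $f$ (or its precise representative) via a Riesz potential of its top-order derivatives: $|f(x)| \lesssim I_l(|\nabla^l f|)(x) := \int_{\R^n} \frac{|\nabla^l f(y)|}{|x-y|^{n-l}}\,\dd y$, valid for $f\in C_0^\infty$ and hence, by Lemma~\ref{lemc3} and a density/truncation argument, in the relevant sense for $f\in\WW^{l,1}(\R^n)$. Then the whole problem becomes the ``trace inequality'' $\int I_l(g)\,d\mu \le C\|g\|_{\LL^1}$ for $g\ge 0$. By Fubini this is equivalent to the uniform bound $\sup_{y}\int_{\R^n}\frac{d\mu(x)}{|x-y|^{n-l}} \le C$. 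One estimates this last integral by slicing the domain into dyadic annuli $\{2^{-k-1}\le |x-y|< 2^{-k}\}$ (and their large-scale counterparts), covering each annulus by boundedly many intervals $I$ of sidelength comparable to $2^{-k}$, and applying $\mu(I)\le \ell(I)^{n-l} \sim 2^{-k(n-l)}$; the factor $|x-y|^{-(n-l)} \sim 2^{k(n-l)}$ cancels the measure bound, and the resulting series $\sum_k 1$ over a bounded range — wait, this naive count gives divergence, so one must be more careful and exploit that the number of scales contributing is controlled, or use a summation by parts grouping the annuli into finitely many dyadic blocks where the covering number stays bounded. This is the standard Maz'ya/Adams argument and it does close, but the bookkeeping of the dyadic sum is where one has to be attentive.

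The main obstacle is precisely making the dyadic annulus sum converge: a crude bound $\mu(\text{annulus at scale }2^{-k}) \lesssim 2^{-k(n-l)}$ paired with $|x-y|^{-(n-l)} \sim 2^{k(n-l)}$ produces $\sum_k 1 = \infty$, so one needs the sharper, scale-localized form of the trace inequality — i.e. one does \emph{not} prove the pointwise bound $\sup_y\int |x-y|^{-(n-l)}d\mu(x)<\infty$ (which is false in general under only \eqref{cb2}), but rather works directly with the decomposition of $f$ on dyadic cubes, using the telescoping of averages $\dashint_Q f$ across a chain of cubes and summing $\sum_Q \mu(Q)\cdot \ell(Q)^{l-n}\cdot\ell(Q)^{?}\,\|\nabla^l f\|_{\LL^1(Q^*)}$, where the exponents balance exactly because of \eqref{cb2}. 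In the borderline case $p=1$ one uses the endpoint Sobolev embedding $\WW^{1,1}(Q)\hookrightarrow\LL^{n/(n-1)}(Q)$ iteratively (or, equivalently, the coarea/isoperimetric inequality) rather than any $\LL^p$ duality, since there is no reflexivity to exploit. Given that the paper explicitly cites \cite[\S1.4.3]{M} for this, I would in the write-up simply invoke Maz'ya's theorem; but if a proof were demanded, the route above — Riesz potential representation, reduction to a trace inequality, scale-by-scale Whitney estimation using \eqref{cb2} with the exponents matched — is the one I would carry out, with the delicate dyadic summation flagged as the point requiring care.
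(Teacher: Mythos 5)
The paper offers no proof of Theorem~\ref{lb2}: it is stated as a citation to Maz'ya \cite[\S1.4.3]{M}, so there is no internal argument to compare your sketch against. Judged on its own terms, your proposal contains a genuine internal inconsistency. You correctly diagnose the central obstruction: under only the growth condition $\mu(I)\le \ell(I)^{n-l}$, the potential $I_l\mu(y)=\int|x-y|^{-(n-l)}\,d\mu(x)$ is generically $+\infty$ (your dyadic annulus computation gives $\sum_k 1 = \infty$, and a concrete example is $\mu = \H^1\lfloor L$ for a line $L\subset\R^2$ with $l=1$), so the Fubini reduction to a ``trace inequality'' $\sup_y I_l\mu(y)<\infty$ cannot work. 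In Adams' scale this is exactly the statement that $I_l : \LL^1(\R^n)\to \LL^{1,\infty}(\mu)$ is only of weak type at this endpoint, never of strong type via the pointwise potential bound. Having seen this, your closing paragraph nevertheless announces that the route you ``would carry out'' is precisely the Riesz--potential/trace--inequality one, with the divergent sum ``flagged as the point requiring care.'' Flagging it does not repair it; that route is closed.

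The repair is the one you mention in passing but do not develop: for $p=1$ the correct mechanism is the level--set/coarea structure, not potential duality. For $l=1$ one writes $\int|u|\,d\mu=\int_0^\infty\mu(\{|u|>t\})\,dt$, uses a boxing--type inequality to get $\mu(\{|u|>t\})\lesssim \H^{n-1}_\infty(\{|u|>t\})\lesssim P(\{|u|>t\})$ from the cube bound $\mu(Q)\le\ell(Q)^{n-1}$, and integrates the coarea formula; for higher $l$ one iterates or passes through the strong capacitary inequality. In fact, in the context of this particular paper, Theorem~\ref{lb2} can be deduced cleanly from Theorem~\ref{lb7}: the growth condition immediately gives $\mu(E)\le c\,\H^{n-l}_\infty(E)$ for every Borel set $E$ (cover $E$, enclose each piece in a cube of comparable side, apply \eqref{cb2}), and since $|f|\le \M f$ off an $\H^{n-l}$--null set (hence $\mu$--a.e.),
\begin{equation*}
\int |f|\,d\mu \;\le\; \int \M f\,d\mu \;=\; \int_0^\infty\mu(\{\M f>t\})\,dt \;\lesssim\; \int_0^\infty\H^{n-l}_\infty(\{\M f>t\})\,dt \;\le\; C\,\|\nabla^l f\|_{\LL^1}.
\end{equation*}
This is the argument your sketch should have converged on, rather than reverting to the trace bound you had already shown to fail.
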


For a function $u\in \LL^1(I)$, $I\subset\R^n$,  define the
polynomial $P_{I,k}[u]$ of degree at most~$k$ by the following
rule:
\begin{equation}
\label{0}\int_Iy^\alpha \left( u(y)-P_{I,k}[u](y) \right) \,\dd
y=0
\end{equation}
for any multi-index $\alpha=(\alpha_1,\dots,\alpha_n)$ of length
$|\alpha|=\alpha_1+\dots+\alpha_n\le k$.

We will often use the following simple technical assertion.

\begin{lem}\label{lb3}{\sl
Suppose $v\in\WW^{n,1}(\R^n)$. Then $v$ is a continuous function
and for any $k=0,\dots,n-1$ and for any $n$-dimensional interval
$I\subset \R^n$ the estimate
\begin{equation}
\label{1} \sup\limits_{y\in I}|v(y)-P_{I,k}[v](y)|\le
C\biggl(\frac{\|\nabla^{k+1}v\|_{\LL^1(I)}}{\ell(I)^{n-k-1}}+
\|\nabla^nv\|_{\LL^1(I)}\biggr)
\end{equation}
holds, where $C$ depends on $n$ only. Moreover, the function
$v_{I,k}(y)=v(y)-P_{I,k}[v](y)$, $y\in I$, can be extended
from~$I$ to the whole of $\R^n$ such that
$v_{I,k}\in\WW^{n,1}(\R^n)$ and
\begin{equation}
\label{1'} \|\nabla^nv_{I,k}\|_{L^1(\R^n)}\le C_0 R(I,k),
\end{equation}
 where $C_0$ also depends on $n$ only and $R(I,k)$ denotes the
right
hand side of the estimates~(\ref{1}) (in brackets). }\end{lem}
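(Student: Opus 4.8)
The plan is to first establish the pointwise estimate~(\ref{1}) for smooth functions on the unit interval by a Sobolev/Poincar\'e argument, then rescale, and finally pass from smooth functions to general $v\in\WW^{n,1}$ by approximation (Lemma~\ref{lemc3}). To set things up, fix an $n$-dimensional interval $I$ and a degree $k\le n-1$, and write $w=v_{I,k}=v-P_{I,k}[v]$. By the defining relations~(\ref{0}), $w$ has vanishing moments up to order $k$ on $I$; in particular $\int_I y^\alpha w\,\dd y=0$ for $|\alpha|\le k$, which is exactly the normalization that makes a Poincar\'e-type inequality available. The continuity of $v$ is immediate from Theorem~\ref{lb2}: applying~(\ref{cb3}) with $l=n$ and $\mu$ a suitably normalized Dirac-type measure (or rather, using the embedding $\WW^{n,1}(\R^n)\hookrightarrow \CC(\R^n)\cap\LL^\infty(\R^n)$, which follows since point masses trivially satisfy $(*-n)$ after the reduction to finite measures on intervals), one gets $v\in\CC(\R^n)$ with the sup bound controlled by $\|\nabla^n v\|_{\LL^1}$.

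For the estimate~(\ref{1}) itself, I would argue on the reference cube $Q=[0,1]^n$. For $u\in\CC^\infty(Q)$ with $\int_Q y^\alpha u\,\dd y=0$ for all $|\alpha|\le k$, one has the iterated Poincar\'e inequality
\begin{equation}
\label{pp1}
\|u\|_{\LL^\infty(Q)}\le C\bigl(\|\nabla^{k+1}u\|_{\LL^1(Q)}+\|\nabla^n u\|_{\LL^1(Q)}\bigr),
\end{equation}
which can be seen as follows: since the $k$-th order moments of $u$ vanish, a standard Poincar\'e--Sobolev chain (Sobolev--Poincar\'e for $\WW^{j,1}\hookrightarrow\LL^{n/(n-1)}$-type embeddings applied iteratively, absorbing lower-order terms via the moment conditions) bounds $\|u\|_{\WW^{k+1,1}(Q)}$ by $\|\nabla^{k+1}u\|_{\LL^1(Q)}$, and then continuing to differentiate up to order $n$ and using the embedding $\WW^{n,1}(Q)\hookrightarrow\LL^\infty(Q)$ yields~(\ref{pp1}) with the two displayed terms on the right (the $\nabla^{k+1}$ term dominates the intermediate derivatives by interpolation/Poincar\'e, while $\nabla^n$ supplies the top-order control for the $\LL^\infty$ embedding). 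Now for a general interval $I$ with $\ell(I)=l$, apply this to $\tilde u(y)=w(a+ly)$, $y\in Q$, where $a$ is the corner of $I$; scaling the derivatives, $\|\nabla^{k+1}\tilde u\|_{\LL^1(Q)}=l^{k+1-n}\|\nabla^{k+1}w\|_{\LL^1(I)}$ and $\|\nabla^n\tilde u\|_{\LL^1(Q)}=\|\nabla^n w\|_{\LL^1(I)}$ (the $l^{n}$ from the change of variables cancels the $l^{-n}$ from $n$ derivatives), and $\nabla^{k+1}w=\nabla^{k+1}v$ since $P_{I,k}$ has degree $\le k$. This gives exactly~(\ref{1}) with a dimensional constant $C$. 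For general $v\in\WW^{n,1}$, take $f_m\in\CC^\infty$ with $f_m\to v$ in $\WW^{n,1}$ (e.g.\ mollification, or Lemma~\ref{lemc3}); both sides of~(\ref{1}) are continuous under $\WW^{n,1}$-convergence (the left side because $v\mapsto \|v-P_{I,k}[v]\|_{\LL^\infty(I)}$ is, using the already-established embedding into $\CC(I)$ and boundedness of $u\mapsto P_{I,k}[u]$ on $\LL^1(I)$), so~(\ref{1}) passes to the limit.

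For the extension statement, I would use a bounded linear extension operator: the restriction $w|_I\in\WW^{n,1}(I)$ (indeed $\nabla^n w|_I=\nabla^n v|_I\in\LL^1(I)$, and the lower-order derivatives are controlled by~(\ref{1}) and its analogues for intermediate orders), and by Stein's extension theorem (or the Calder\'on extension theorem for cubes, whose norm can be tracked under scaling) there is $E\colon\WW^{n,1}(I)\to\WW^{n,1}(\R^n)$ with $\|\nabla^n Ew\|_{\LL^1(\R^n)}\le C\sum_{j=0}^{n}\ell(I)^{j-n}\|\nabla^j w\|_{\LL^1(I)}$. Each intermediate norm $\ell(I)^{j-n}\|\nabla^j w\|_{\LL^1(I)}$ for $j\le k$ is bounded by $R(I,k)$ (using~(\ref{1}) together with a Poincar\'e estimate for $\nabla^j w$ in terms of $\nabla^{k+1}w$, again exploiting the vanishing moments), while for $k+1\le j\le n$ it is bounded by $\ell(I)^{j-n}\|\nabla^j v\|_{\LL^1(I)}\le R(I,k)$ after interpolating $\|\nabla^j v\|_{\LL^1(I)}$ between $\|\nabla^{k+1}v\|_{\LL^1(I)}$ and $\ell(I)^{\,n-k-1}\|\nabla^n v\|_{\LL^1(I)}$ by a Gagliardo--Nirenberg inequality on $I$ (scale-invariant in exactly this combination). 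Hence $\|\nabla^n v_{I,k}\|_{\LL^1(\R^n)}\le C_0 R(I,k)$ as claimed. The main obstacle, and the point needing the most care, is tracking the $\ell(I)$-powers through both the Poincar\'e chain and the extension operator so that the final bound is genuinely scale-invariant and the constant depends on $n$ only; everything else is routine once the reference-cube inequality~(\ref{pp1}) and a scaling-robust extension are in hand.
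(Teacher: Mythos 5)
Your proposal is correct and follows essentially the same strategy as the paper: reduce to the unit cube, use a Poincar\'e-type inequality that exploits the vanishing moments of $v-P_{I,k}[v]$ to drop the polynomial term, rescale, and apply a bounded extension operator whose $\nabla^n$-bound you control by the same Poincar\'e inequality. The only difference is one of packaging: where you build the cube inequality from iterated Sobolev--Poincar\'e and Ehrling/Gagliardo--Nirenberg interpolation and track the $\ell(I)$-powers through a Stein--Calder\'on extension by hand, the paper simply invokes Maz'ya \S 1.1.15, which already provides both the estimate $\|u\|_{\WW^{n,1}(I)}\le c\bigl(\|P_{I,k}[u]\|_{\LL^1(I)}+\|\nabla^{k+1}u\|_{\LL^1(I)}+\|\nabla^n u\|_{\LL^1(I)}\bigr)$ and a unit-cube extension with $\|\nabla^n u\|_{\LL^1(\R^n)}\le c\|u\|_{\WW^{n,1}(I)}$, so the whole lemma falls out by substituting $u=v-P_{I,k}[v]$.
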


\begin{proof}
The existence of a continuous representative for $v$ follows from
Remark~2 of \S1.4.5 in~\cite{M}. Because of coordinate invariance
it is sufficient to prove the estimate~(\ref{1})--(\ref{1'}) for
the case when $I$ is a unit cube: $I=[0,1]^n$. By results of
\cite[\S 1.1.15]{M} for any $u\in \WW^{n,1}(I)$ the estimates
\begin{equation}
\label{1.1} \sup\limits_{y\in I}|u(y)|\le c\|u\|_{\WW^{n,1}(I)}\le
c\bigl( \| P_{I,k}[u]
\|_{\LL^{1}(I)}+\|\nabla^{k+1}u\|_{\LL^1(I)}+\|\nabla
^nu\|_{\LL^1(I)}\bigr),
\end{equation}
hold, where $c=c(n,k)$ is a constant. Taking
$u(y)=v(y)-P_{I,k}[v](y)$, the first term on the right hand side
of (\ref{1.1}) vanishes and so the inequality~(\ref{1.1}) turns to
the estimates~(\ref{1})--(\ref{1'}) (here we used also the
following fact: every function $u\in \WW^{n,1}(I)$ can be extended
to a function $u\in \WW^{n,1}(\R^n)$ such that the estimate $
\|\nabla^nu\|_{\LL^1(\R^n)}\le  c \|u\|_{\WW^{n,1}(I)}$ holds, see
\cite[\S 1.1.15]{M}).
\end{proof}

The following two results are crucial for our proof.

\begin{ttt}[\cite{Ad}]\label{lb7}{\sl
If $f\in\WW^{k,1}(\R^n)$, where $k \in \{ 1,\dots,n-1 \}$, then
$$
\int_0^\infty\H^{n-k}_\infty( \{x\in\R^n : \M
f(x)\ge\lambda\})\,\dd\lambda\le
C\int\limits_{\R^n}{|\nabla^kf(y)|}\,\dd y,
$$
where $C$ depends on $n,k$ only and
$$
\M f(x)=\sup\limits_{r>0}\,r^{-n}\int\limits_{B(x,r)}|f(y)|\,\dd y
$$
is the usual Hardy-Littlewood maximal function of~$f$. }
\end{ttt}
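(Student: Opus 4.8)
We must show the Choquet-type inequality
$$\int_0^\infty\H^{n-k}_\infty(\{x:\M f(x)\ge\lambda\})\,\dd\lambda\le C\|\nabla^kf\|_{\LL^1}$$
for $f\in\WW^{k,1}(\R^n)$. The natural strategy is to combine two classical ingredients: first, the strong-type bound for the Hardy–Littlewood maximal operator on Choquet spaces built over Hausdorff content, which is exactly Adams' theorem; and second, a Sobolev-type inequality controlling the Choquet $\LL^1$-norm of $f$ (with respect to $\H^{n-k}_\infty$) by $\|\nabla^kf\|_{\LL^1}$. The point is that $\H^{n-k}_\infty$ is, up to constants, a Choquet capacity for which $\M$ is bounded from $\LL^1_{\mathrm{Choq}}$ to $\LL^1_{\mathrm{Choq}}$ — this is the content of Adams' work ``A note on Choquet integrals with respect to Hausdorff capacity'' — and the left-hand side here is precisely the Choquet integral $\int_{\R^n}\M f\,\dd\H^{n-k}_\infty$ written via the layer-cake (distribution function) formula.

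First I would recall the two facts to be quoted. (i) \emph{Adams' maximal inequality:} for $0<d\le n$ there is $C=C(n,d)$ with $\int_0^\infty\H^d_\infty(\{\M g>\lambda\})\,\dd\lambda\le C\int_0^\infty\H^d_\infty(\{|g|>\lambda\})\,\dd\lambda$ for all $g\in\LL^1_{\loc}$; equivalently, $\M$ is bounded on the Choquet space $\LL^1(\H^d_\infty)$. (ii) \emph{Sobolev imbedding into Choquet $\LL^1$:} for $f\in\WW^{k,1}(\R^n)$, $1\le k\le n-1$, one has $\int_0^\infty\H^{n-k}_\infty(\{|f|>\lambda\})\,\dd\lambda\le C\|\nabla^kf\|_{\LL^1}$. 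Granting (i) and (ii), the proof is immediate: apply (i) with $d=n-k$ and $g=f$, then (ii), giving
$$\int_0^\infty\H^{n-k}_\infty(\{\M f>\lambda\})\,\dd\lambda\le C\int_0^\infty\H^{n-k}_\infty(\{|f|>\lambda\})\,\dd\lambda\le C\|\nabla^kf\|_{\LL^1},$$
which is the claim.

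The substantive work is establishing (ii), i.e. the fact that $\H^{n-k}_\infty$-Choquet $\LL^1$-control of $f$ follows from $\LL^1$-control of $\nabla^kf$. By Lemma~\ref{lemc3} it suffices to prove it for $f\in C_0^\infty(\R^n)$ and then pass to the limit (Choquet integrals with respect to $\H^d_\infty$ are subadditive, and convergence in $\WW^{k,1}$ combined with a Fatou-type argument handles the passage). For smooth compactly supported $f$, the key observation is that for every $n$-dimensional interval $I$, the set $\{x\in I:|f(x)|>\lambda\}$ has content controlled by comparing $\lambda$ to the averaged polynomial approximation of $f$ on $I$ via Lemma~\ref{lb3}, or more directly: one writes the Riesz-potential representation $|f(x)|\lesssim I_k(|\nabla^kf|)(x)$ and then invokes the known estimate $\H^{n-k}_\infty(\{I_k g>\lambda\})\le C\lambda^{-1}\|g\|_{\LL^1}$ for $g\ge0$ — this is a standard potential-theoretic bound (capacitary weak-type estimate for Riesz potentials against Hausdorff content), after which integrating in $\lambda$ against the layer-cake formula and using subadditivity of the content gives the linear bound $C\|\nabla^kf\|_{\LL^1}$. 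Alternatively, (ii) can be read off directly from Theorem~\ref{lb2}: a measure $\mu$ concentrated suitably relative to $\H^{n-k}_\infty$ has property $(*-k)$, and $\int|f|\,\dd\mu\le C\|\nabla^kf\|_{\LL^1}$; taking the supremum over such $\mu$ and invoking the duality between Hausdorff content and the class of measures with property $(*-k)$ (Frostman's lemma) converts this into the Choquet integral bound.

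The main obstacle is the interplay between the Choquet integral (which is \emph{not} additive) and the linear structure needed for the Sobolev/potential estimate: one cannot naively decompose $f$ and add. The cleanest route around this is the Riesz potential representation plus the weak-type $(1,1)$ bound for $I_k$ against $\H^{n-k}_\infty$, since the weak-type estimate is genuinely linear in the input $|\nabla^kf|$ and the $\lambda$-integration of a weak-type bound against a subadditive content still yields a strong-type conclusion by the standard truncation/layer-cake trick. I would therefore expect the proof to spend most of its effort verifying the capacitary weak-type estimate for Riesz potentials (or equivalently citing Adams) and checking that the limiting procedure from $C_0^\infty$ to general $\WW^{k,1}$ is valid for Choquet integrals — the rest being the two-line combination above.
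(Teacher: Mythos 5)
The paper does not prove Theorem~\ref{lb7}; it quotes it directly from~\cite{Ad}, so there is no in-paper argument to compare against. Your high-level decomposition is nevertheless the natural one: (i) boundedness of $\M$ on the Choquet space $\LL^1(\H^{n-k}_\infty)$ and (ii) the Sobolev-type estimate $\int_0^\infty\H^{n-k}_\infty(\{|f|>\lambda\})\,\dd\lambda\le C\|\nabla^kf\|_{\LL^1}$, and combining them as you do is correct in principle. The difficulties are in your two proposed substantiations of~(ii).

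Your first route, ``Riesz-potential representation, capacitary weak-type bound for $I_k$, then layer-cake integration,'' does not close. Integrating the weak-type bound $\H^{n-k}_\infty(\{I_kg>\lambda\})\le C\lambda^{-1}\|g\|_{\LL^1}$ over $\lambda$ is logarithmically divergent, and truncating (using that $f$ is bounded with bounded support) yields a logarithmic factor, not the asserted linear bound. The ``standard truncation/layer-cake trick'' upgrades weak $(1,1)$ to $\LL^p$ for $p>1$, not to strong $(1,1)$; this is the same obstruction that prevents $\M$ from being bounded on $\LL^1(\R^n)$ even though it is weak $(1,1)$. Strong $\LL^1$ estimates against Hausdorff content are genuine endpoint results, and their proofs must exploit the scaling $\H^{n-k}_\infty(B(x,r))\lesssim r^{n-k}$ with exponent $n-k<n$ to gain summability across dyadic scales. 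Your second route, via Theorem~\ref{lb2} and ``Frostman duality,'' points the right way but also overclaims: Frostman's lemma gives $\H^d_\infty(E)\sim\sup_\mu\mu(E)$ for a fixed set $E$, whereas you need the Choquet-integral identity $\int f\,\dd\H^d_\infty\sim\sup_\mu\int f\,\dd\mu$, which requires a single Frostman measure to nearly saturate the content of all superlevel sets $\{f>\lambda\}$ simultaneously. That is a nontrivial theorem about Hausdorff content (usually obtained via its dyadic variant), not an automatic corollary of the set-level comparison. In short, your outline correctly isolates the two halves of the argument, but both proposed derivations of~(ii) contain a real gap; the citation~\cite{Ad} is doing substantive work that you have not reconstructed.
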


\begin{ttt}[\cite{Yom}]\label{lb8}{\sl
For $A\subset{\R}^m$ and $\varepsilon>0$ let \ $\E(\varepsilon,A)$
denote the minimal number of balls of radius $\varepsilon$
covering $A$. Then for any polynomial $P\colon \R^n\to\R$ of
degree at most $k$, for each ball $B\subset \R^n$ of radius $r>0$,
and any number $\varepsilon >0$ the estimate
$$
\E(\varepsilon r,\{P(x):x\in B,\ |\nabla P(x)|\le\varepsilon\})\le
C_*
$$
holds, where $C_*$ depends on $n,k$ only.}
\end{ttt}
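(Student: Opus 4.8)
The statement above is Yomdin's entropy estimate for near-critical values of polynomials; here is how I would prove it.

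\medskip

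\noindent\textbf{Reduction to the unit ball.} Write $B=B(x_0,r)$ and put $Q(y)=P(x_0+ry)$ for $y\in\overline{B(0,1)}$. Then $Q$ is a polynomial of degree at most $k$, and since $\nabla Q(y)=r\,\nabla P(x_0+ry)$, the affine bijection $y\mapsto x_0+ry$ shows that
$$
\{P(x):x\in B,\ |\nabla P(x)|\le\varepsilon\}=\{Q(y):y\in B(0,1),\ |\nabla Q(y)|\le r\varepsilon\}.
$$
Because $B(0,1)\subset\overline{B(0,1)}$, it therefore suffices to prove that for every polynomial $Q\colon\R^n\to\R$ of degree at most $k$ and every $\eta>0$ one has $\E\bigl(\eta,\{Q(y):y\in\overline{B(0,1)},\ |\nabla Q(y)|\le\eta\}\bigr)\le C_*(n,k)$, where $C_*$ depends only on $n$ and $k$.

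\medskip

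\noindent\textbf{Main argument.} Fix $Q$ and $\eta>0$ and consider the compact near-critical set
$$
A_\eta=\bigl\{y\in\overline{B(0,1)}:|\nabla Q(y)|^2\le\eta^2\bigr\},
$$
the intersection of the closed unit ball with a sublevel set of the polynomial $|\nabla Q|^2$ of degree at most $2(k-1)$. By Milnor--Thom type bounds on the topology of basic semialgebraic sets, the number of connected components of $A_\eta$ is at most some $N=N(n,k)$, \emph{uniformly in} $\eta$ — the bound depends only on the dimension and on the degrees of the defining polynomials. Let $C$ be one such component. It is a connected semialgebraic set of complexity bounded in terms of $n$ and $k$, hence $L$-quasiconvex with $L=L(n,k)$: any two of its points are joined by a rectifiable path inside $C$ of length at most $L\cdot\diam C\le 2L$. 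Since $|\nabla Q|\le\eta$ on $C$, integrating $\nabla Q$ along such a path gives $|Q(y')-Q(y'')|\le 2L\eta$ for all $y',y''\in C$, so $Q(C)$ is an interval of length at most $2L\eta$. Consequently $Q(A_\eta)$ lies in a union of at most $N$ intervals of length at most $2L\eta$, and such a union is covered by at most $N(L+1)$ balls of radius $\eta$. Taking $C_*(n,k)=N(n,k)\bigl(L(n,k)+1\bigr)$ completes the proof.

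\medskip

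\noindent\textbf{The main obstacle.} The two steps where genuine work is hidden are the inputs from semialgebraic geometry. The $\eta$-uniform bound on the number of connected components of $A_\eta$ is classical (Oleinik--Petrovsky, Milnor, Thom); the more delicate one is the uniform quasiconvexity constant $L(n,k)$ for connected components of semialgebraic sets of bounded complexity. This is the heart of the matter: it is exactly what prevents a near-critical component from ``meandering'' so that $Q$ oscillates by far more than $\diam C\cdot\sup_C|\nabla Q|$ along it, and it is the kind of estimate furnished by the metric theory of tame sets (Yomdin and Comte). An equivalent packaging of the argument is to show first that every near-critical value lies within $C_1(n,k)\,\eta$ of an honest critical value, and then to note that $Q$ is constant on each connected component of its critical set $\{\nabla Q=0\}\cap\overline{B(0,1)}$ (integrate $\nabla Q\equiv0$ along semialgebraic paths), of which there are at most $N_0(n,k)$ by Milnor--Thom, so that $Q$ has at most $N_0(n,k)$ critical values; but the first of these two steps again rests on the same quantitative control of semialgebraic sets. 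See \cite{Yom} for the details of this circle of ideas.
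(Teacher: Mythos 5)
The paper does not prove Theorem~\ref{lb8}; it is stated as a citation to Yomdin~\cite{Yom} and used as a black box, so there is no in-paper argument to compare against. Taken on its own terms, your outline is correct. The rescaling to the unit ball is clean, and the final counting (at most $N$ connected components, each mapping onto an interval of length $\le 2L\eta$, hence at most $N(L+1)$ balls of radius $\eta$) is right. The whole weight of the argument rests on the two semialgebraic inputs you identify: the Milnor--Thom-type bound on the number of connected components of the basic closed semialgebraic set $A_\eta$, and the uniform geodesic-diameter (quasiconvexity) bound $L=L(n,k)$ for connected components of semialgebraic sets of bounded description complexity. You are right that the second is the substantive one; it is what is furnished by the quantitative metric theory of semialgebraic sets (Yomdin's earlier work, and, in explicit form, D'Acunto--Kurdyka's bound on geodesic diameter of connected real algebraic sets in terms of dimension, degree, and Euclidean diameter). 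Deferring it is reasonable in an outline, but be aware that filling it in honestly is essentially as hard as proving the theorem, and that Yomdin's original 1983 paper does not proceed via the geodesic-diameter lemma but via a multidimensional variation (Vitushkin-type) argument; so what you have is a legitimate modern reproof rather than a reconstruction of~\cite{Yom}.

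One small caveat on your ``equivalent packaging'': the claim that every near-critical value lies within $C_1(n,k)\,\eta$ of an honest critical value can fail as stated, since the critical set $\{\nabla Q=0\}\cap\overline{B(0,1)}$ may be empty while the near-critical set is all of $\overline{B(0,1)}$ (take $Q$ linear and $\eta\ge|\nabla Q|$). The main argument you give does not suffer from this and handles that regime automatically, because when $\eta$ dominates $\sup_{\overline{B(0,1)}}|\nabla Q|$ the whole image already has diameter $\le 2\eta$.
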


To apply Theorem~\ref{lb7}, we need also the following simple
estimate and its corollary.

\begin{lem}[see Lemma~2 in \cite{Dor}]\label{lb9'}{\sl
Let $u\in\WW^{1,1}(\R^n)$.  Then for any ball $B(z,r)\subset\R^n$,
$B(z,r)\ni x$, the estimate
$$
\biggl|u(x)-\dashint_{B(z,r)}u(y)\,dy\biggr|\le Cr(\M \nabla u)(x)
$$
holds, where $C$ depends on $n$ only and  $\M\nabla u$ is a
Hardy-Littlewood maximal function of~$\nabla u$. }\end{lem}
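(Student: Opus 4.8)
The plan is to reduce the estimate to a telescoping sum over a sequence of dyadically shrinking balls converging to $x$, combined with the classical Poincaré inequality on each ball. First I would fix $x\in B(z,r)$ and, for $j\ge 0$, set $r_j = 2^{-j}r$ and $B_j = B(x,r_j)$, so that $B_0 = B(x,r)$ and $B_j \to \{x\}$. Since the precise representative $u^*(x)$ equals $\lim_{j\to\infty}\dashint_{B_j}u\,\dd y$ at every Lebesgue point, and such points are of full measure, we may write, at least for a.e.\ $x$ (and then by continuity considerations for all relevant $x$),
\[
u(x)-\dashint_{B_0}u\,\dd y \;=\; \sum_{j=0}^{\infty}\Bigl(\dashint_{B_{j+1}}u\,\dd y-\dashint_{B_j}u\,\dd y\Bigr).
\]

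Next I would bound each term of this series. Since $B_{j+1}\subset B_j$ and $\Le^n(B_j)\le 2^n\Le^n(B_{j+1})$, we have
\[
\Bigl|\dashint_{B_{j+1}}u\,\dd y-\dashint_{B_j}u\,\dd y\Bigr|
\le \dashint_{B_{j+1}}\Bigl|u(y)-\dashint_{B_j}u\Bigr|\,\dd y
\le 2^n\dashint_{B_j}\Bigl|u(y)-\dashint_{B_j}u\Bigr|\,\dd y.
\]
By the classical Poincaré inequality on the ball $B_j$, the right-hand side is at most $C r_j\,\dashint_{B_j}|\nabla u(y)|\,\dd y \le C r_j\,(\M\nabla u)(x)$, using that $B_j\ni x$ so that the average of $|\nabla u|$ over $B_j$ is controlled by the Hardy--Littlewood maximal function at $x$ (up to a dimensional constant absorbing the volume normalization). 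Summing over $j$ and using $\sum_j r_j = \sum_j 2^{-j}r = 2r$ gives the bound $Cr(\M\nabla u)(x)$, as claimed. Finally, I would note that the same argument bounds the difference between the average over $B(z,r)$ and the average over $B(x,r)$ — or more directly, one compares $\dashint_{B(z,r)}u$ with $\dashint_{B(x,r)}u$ using that both balls have comparable radii and $B(x,r)\subset B(z,2r)$, reducing the general case $B(z,r)\ni x$ to the concentric case treated above at the cost of enlarging the constant.

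The main obstacle is a minor technical one: justifying the telescoping identity and the use of the Lebesgue-point limit for the particular point $x$ under consideration, rather than merely for a.e.\ point. Since $u\in\WW^{1,1}(\R^n)$ need not be continuous (this requires $\WW^{n,1}$, cf.\ Lemma~\ref{lb3}), one should either restrict to Lebesgue points — which suffices since the statement is really needed $\H^1$- or $\Le^n$-a.e.\ — or invoke the standard fact that for $\WW^{1,1}$ functions the complement of the Lebesgue set has capacity (hence Hausdorff content $\H^{n-1}_\infty$) zero, which is more than enough here. Everything else is the routine Poincaré-plus-telescoping estimate, and since the statement is quoted from \cite[Lemma~2]{Dor} the details can be kept brief.
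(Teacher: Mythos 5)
The paper offers no proof here: Lemma~\ref{lb9'} is quoted verbatim from Dorronsoro~\cite[Lemma~2]{Dor}, as is its $k$-th order companion Lemma~\ref{dorl2}, so there is no ``paper's proof'' to compare against. Your telescoping-plus-Poincar\'e argument is the standard route to this kind of pointwise maximal-function bound and it is correct; the reduction from a general ball $B(z,r)\ni x$ to the concentric ball $B(x,r)$ via the enlarged ball $B(z,2r)$ (both $B(z,r)$ and $B(x,r)$ lie in $B(z,2r)\subset B(x,3r)$, so Poincar\'e on $B(z,2r)$ together with the maximal function at $x$ controls both differences of averages) is routine and costs only a dimensional constant. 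One simplification worth noting on the point you flag as the main obstacle: the Lebesgue-point issue is in fact moot, because the argument certifies its own applicability. If $(\M\nabla u)(x)=\infty$ the claimed estimate is vacuous. If $(\M\nabla u)(x)<\infty$, your term-by-term bound shows that $\sum_{j}\bigl|\dashint_{B_{j+1}}u-\dashint_{B_j}u\bigr|<\infty$, so the averages $\dashint_{B_j}u$ form a Cauchy sequence (and the same Poincar\'e comparison controls averages over intermediate radii, so the full limit as $\rho\to0$ exists and equals the dyadic one); by the convention~(\ref{lrule}) for the precise representative this limit is exactly $u(x)$. Thus the telescoping identity holds at every $x$ where the estimate is nontrivial, and no appeal to a.e.\ Lebesgue points or to the $\H^{n-1}_\infty$-smallness of the exceptional set is required.
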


\begin{cor}\label{lb10}
Let $u\in\WW^{1,1}(\R^n)$. Then for any ball $B\subset \R^n$ of a
radius $r>0$ and for any number $\varepsilon >0$ the estimate
$$
\diam (\{u(x):x\in B,\ (\M \nabla u)(x)\le\varepsilon\})\le
C_{**}\varepsilon r
$$
holds, where $C_{**}$ is a constant depending on $n$ only.
\end{cor}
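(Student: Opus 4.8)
The plan is to deduce Corollary~\ref{lb10} directly from Lemma~\ref{lb9'}.
Fix a ball $B = B(z,r)$ and $\varepsilon > 0$, and write $m = \dashint_{B(z,r)} u(y)\,\dd y$ for the integral mean.
First I would observe that, by Lemma~\ref{lb9'} applied with the ball $B(z,r)$ itself (which contains every $x \in B$), for any point $x \in B$ with $(\M\nabla u)(x) \le \varepsilon$ we have
$$
|u(x) - m| \le Cr(\M\nabla u)(x) \le Cr\varepsilon,
$$
where $C = C(n)$ is the constant from Lemma~\ref{lb9'}.

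Next, for any two points $x_1, x_2 \in B$ with $(\M\nabla u)(x_i) \le \varepsilon$, the triangle inequality gives
$$
|u(x_1) - u(x_2)| \le |u(x_1) - m| + |m - u(x_2)| \le 2Cr\varepsilon.
$$
Taking the supremum over all such pairs yields
$$
\diam\bigl(\{u(x) : x \in B,\ (\M\nabla u)(x) \le \varepsilon\}\bigr) \le 2Cr\varepsilon,
$$
so the claim holds with $C_{**} = 2C$, a constant depending on $n$ only.

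There is essentially no obstacle here: the entire content is already packaged in Lemma~\ref{lb9'}, and the corollary is just the observation that controlling the oscillation of each value around a common reference value $m$ controls the diameter of the value set. The only minor point to be careful about is that Lemma~\ref{lb9'} requires $B(z,r) \ni x$, which is exactly the hypothesis $x \in B$; so a single application of the lemma with the fixed ball $B$ suffices for all admissible $x$ simultaneously, and no covering or iteration argument is needed. (If one wished, one could also absorb the reference to $m$ by noting that the set in question has diameter at most twice its radius about any of its own points, but routing through the mean value $m$ is the cleanest.)
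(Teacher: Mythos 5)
Your proof is correct and is exactly what the paper intends: the corollary follows from Lemma~\ref{lb9'} by comparing each value $u(x)$ to the common mean $m = \dashint_B u$ and applying the triangle inequality, giving $C_{**} = 2C$. The paper leaves the proof implicit for precisely this reason.
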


We will use the following $k$-order analog of Lemma~\ref{lb9'}.

\begin{lem}[see Lemma~2 in \cite{Dor}]\label{dorl2}{\sl
Let $u\in\WW^{k,1}(\R^n)$, $k\le n$.  Then for any $n$-dimensional
interval $I\subset\R^n$, $x\in I$, and for any $m=0,1,\dots,k-1$
the estimate
\begin{equation}
\label{kDor}\bigl|\nabla^mu(x)-\nabla^mP_{I,k-1}[u](x)\bigr|\le
C\ell(I)^{k-m}(\M \nabla^k u)(x)
\end{equation}
holds, where the constant $C$ depends on $n,k$ only. }\end{lem}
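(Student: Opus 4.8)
The plan is to reduce the $k$-order estimate \eqref{kDor} to the first-order case handled by Lemma~\ref{lb9'}, exploiting the exact-reproduction property of the Taylor-type polynomial $P_{I,k-1}[u]$. As before, by coordinate invariance and scaling it suffices to treat the unit cube $I = [0,1]^n$; the powers $\ell(I)^{k-m}$ on the right-hand side are then dictated by how the estimate transforms under dilation, so the scaling bookkeeping is routine and I would only sketch it. Fix $m \in \{0, 1, \dots, k-1\}$ and set $w = u - P_{I,k-1}[u]$. Each component of $\nabla^m w$ lies in $\WW^{1,1}(\R^n)$ after extension (using the extension statement from \cite[\S1.1.15]{M} as in Lemma~\ref{lb3}), and $\nabla^{m+1} w = \nabla^{m+1} u - \nabla^{m+1} P_{I,k-1}[u]$.

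First I would apply Lemma~\ref{lb9'} to each component of $g := \nabla^m w$ on the ball $B = B(x_0, \sqrt n)$ containing $I$, or more simply iterate it: for $x \in I$,
\[
\bigl| \nabla^m w(x) - \nabla^m P_{I,k-1}[w](x) \bigr| \le C\, (\M \nabla^{m+1} w)(x),
\]
but since $P_{I,k-1}$ is a linear projection onto polynomials of degree $\le k-1$ and $w = u - P_{I,k-1}[u]$ satisfies $P_{I,k-1}[w] = 0$, the subtracted term vanishes. Hence $|\nabla^m w(x)| \le C (\M \nabla^{m+1} w)(x)$. The remaining task is to control $\M \nabla^{m+1} w$ by $\M \nabla^k u$. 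Here I would use that $\nabla^{m+1} P_{I,k-1}[u]$ is a polynomial of degree $\le k - 1 - m$, and bound its sup-norm on $I$ by $C \| \nabla^{m+1} u \|_{\LL^1(I)}$ via the polynomial estimates in \cite[\S1.1.15]{M} (equivalence of norms on the finite-dimensional space of polynomials of bounded degree, restricted to the cube), then iterate down through orders $m+1, m+2, \dots, k$ using Poincaré-type inequalities $\| \nabla^j u - (\text{poly}) \|_{\LL^1(I)} \le C \| \nabla^{j+1} u \|_{\LL^1(I)}$ until only $\| \nabla^k u \|_{\LL^1(I)}$ survives, and finally dominate that $\LL^1$-average pointwise by $(\M \nabla^k u)(x)$ for $x \in I$ (the cube $I$ sits inside a fixed ball about $x$).

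The main obstacle — really the only non-mechanical point — is organizing the downward induction on the derivative order so that at each stage the polynomial correction is both (i) absorbed correctly into $P_{I,k-1}[u]$ (one must check $\nabla^{m+1} P_{I,k-1}[u]$ is the right polynomial approximant to $\nabla^{m+1} u$, which follows because differentiating the defining orthogonality relations \eqref{0} shifts the multi-index range appropriately) and (ii) estimated by the next $\LL^1$ norm of derivatives without losing the clean dependence on $\ell(I)$. Once the scaling is reinstated, each step contributes exactly one factor of $\ell(I)$, giving the claimed $\ell(I)^{k-m}$, and the constant depends only on $n$ and $k$ since all intermediate constants (Sobolev, Poincaré, polynomial norm-equivalence, maximal function) do. I would present the unit-cube computation in full and then state that the general case follows by the standard affine change of variables.
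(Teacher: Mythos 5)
The central displayed inequality
\[
\bigl|\nabla^m w(x)-\nabla^m P_{I,k-1}[w](x)\bigr|\le C\,(\M\nabla^{m+1}w)(x)
\]
is asserted but not derived, and it cannot follow from Lemma~\ref{lb9'} in the way you suggest. Lemma~\ref{lb9'} compares the value $g(x)$ with the \emph{ball mean} $\dashint_B g$, a constant; it does not compare $\nabla^m w(x)$ with $\nabla^m P_{I,k-1}[w](x)$, which for $m\le k-2$ is a genuine polynomial of positive degree. The remark ``or more simply iterate it'' leaves the actual derivation unstated, and once you invoke $P_{I,k-1}[w]=0$ the claim becomes $|\nabla^m w(x)|\le C(\M\nabla^{m+1}w)(x)$ — which is essentially a variant of the very lemma you are trying to prove, not a corollary of the first-order case. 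Note also that $\dashint_I\nabla^m w$ is \emph{not} zero just because $P_{I,k-1}[w]=0$: the moment conditions~(\ref{0}) are against monomials $y^\alpha$, not against derivatives of $w$, and integrating by parts produces boundary terms that do not vanish. So even after one application of Lemma~\ref{lb9'} you are left with a constant $\dashint\nabla^m w$ that still needs to be estimated, and nothing you have written handles it.

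The second step — controlling $\M\nabla^{m+1}w$ by $\M\nabla^k u$ — also fails as sketched. Your Poincar\'e/polynomial-norm cascade yields only integrated bounds of the type $\|\nabla^{m+1}w\|_{\LL^1}\le C\|\nabla^k u\|_{\LL^1(I)}$, and an $\LL^1$ bound on a function does not give a pointwise bound on its maximal function: for small radii $r$, the average $r^{-n}\int_{B(x,r)}|\nabla^{m+1}w|$ can be arbitrarily large while $(\M\nabla^k u)(x)$ stays small, because the $\LL^1$ norm only controls the large-$r$ part of $\M$. If instead you aimed for $|\nabla^m w(x)|\le C\|\nabla^k u\|_{\LL^1(I)}$ uniformly in $x\in I$, that would amount to $\WW^{k-m,1}\hookrightarrow\LL^\infty$, which holds only when $k-m\ge n$, i.e.\ $k=n$ and $m=0$. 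The paper itself does not prove this lemma but cites Dorronsoro; the mechanism there (and the standard one) is a multiscale telescoping argument: take a chain of nested cubes $I=Q_0\supset Q_1\supset\cdots\ni x$ with $\ell(Q_{j+1})=\tfrac12\ell(Q_j)$, write $\nabla^m u(x)-\nabla^m P_{I,k-1}[u](x)=\sum_{j\ge0}\bigl(\nabla^m P_{Q_{j+1},k-1}[u](x)-\nabla^m P_{Q_j,k-1}[u](x)\bigr)$ (valid at Lebesgue points), bound each consecutive difference by $C\ell(Q_j)^{k-m-n}\int_{Q_j}|\nabla^k u|\le C\ell(Q_j)^{k-m}(\M\nabla^k u)(x)$ using the defining moment conditions on consecutive cubes, and sum the resulting geometric series in $\ell(Q_j)^{k-m}$. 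It is precisely this sum over all scales down to $x$ that produces the pointwise factor $(\M\nabla^k u)(x)$ with the power $\ell(I)^{k-m}$; that multiscale structure is entirely absent from your proposal.
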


\section{On images of sets of small Hausdorff contents}

The main result of this section is the following Luzin
$N$--property for $\WW^{n,1}$--functions:

\begin{ttt}\label{th3.3}{\sl
Let $v\in \WW^{n,1}(\R^n)$. Then for each $\varepsilon>0$ there
exists $\delta>0$ such that for any set $E\subset\R^n$ \ if \
$\H^1_\infty(E)<\delta$, then $\H^1(v(E))<\varepsilon$. In
particular, $\H^1(v(E))=0$ whenever $\H^1(E)=0$.}
\end{ttt}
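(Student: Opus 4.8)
The plan is to split $v$ into a smooth function plus a function of small $\WW^{n,1}$-norm, and then to cover $E$ by families of balls sorted by the size of the Hardy--Littlewood maximal function $\M\nabla v_1$, estimating the oscillation of $v$ on each ball through Corollary~\ref{lb10} and controlling the total length of the covering families through the Adams estimate, Theorem~\ref{lb7}. Fix $\varepsilon>0$. By Lemma~\ref{lemc3} write $v=v_0+v_1$ with $v_0\in\CC_0^\infty(\R^n)$ and $\|v_1\|_{\WW^{n,1}}<\eta$, where $\eta>0$ (depending only on $\varepsilon$) will be chosen at the very end; put $L:=\Le^n(B(0,1))\,\|\nabla v_0\|_{\LL^\infty}<\infty$, so that $\M\nabla v(x)\le\M\nabla v_0(x)+\M\nabla v_1(x)\le L+\M\nabla v_1(x)$ for every $x$. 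Since $\nabla v_1\in\WW^{n-1,1}(\R^n)$, Theorem~\ref{lb7} with $k=n-1$ gives, writing $\varphi(t):=\H^1_\infty(\{x:\M\nabla v_1(x)\ge t\})$ (a non-increasing function of $t>0$),
\begin{equation}\label{plan-ad}
\int_0^\infty\varphi(t)\,\dd t\le C\,\|\nabla^nv_1\|_{\LL^1}<C\eta ;
\end{equation}
in particular $t\,\varphi(t)\le C\eta$ for all $t$ and $\varphi(t)\to0$ as $t\to\infty$, whence $\H^1_\infty(\{\M\nabla v_1=\infty\})=0$. (For $n=1$ the statement is elementary, so assume $n\ge2$.)

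Now let $\H^1_\infty(E)<\delta$. Put $E_0=E\cap\{\M\nabla v_1\le1\}$, $E_i=E\cap\{2^{i-1}<\M\nabla v_1\le2^i\}$ for $i\ge1$, and $E_\infty=E\cap\{\M\nabla v_1=\infty\}$. For each $i\ge0$ one has $\H^1_\infty(E_i)\le\min(\delta,\varphi(2^{i-1}))$ (with the convention $\varphi(2^{-1}):=+\infty$, so this is just $\delta$ for $i=0$), hence $E_i$ lies in a union of balls $B_{i,k}$ of radii $r_{i,k}$ with $\sum_k r_{i,k}\le2\min(\delta,\varphi(2^{i-1}))$. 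On $E_i$ we have $\M\nabla v\le L+2^i$, so $E_i\cap B_{i,k}\subset B_{i,k}\cap\{\M\nabla v\le L+2^i\}$ and Corollary~\ref{lb10} (applied to $v\in\WW^{1,1}(\R^n)$) yields
\begin{equation}\label{plan-osc}
\diam v(E_i\cap B_{i,k})\le\diam v\bigl(B_{i,k}\cap\{\M\nabla v\le L+2^i\}\bigr)\le C_{**}(L+2^i)\,r_{i,k}.
\end{equation}
Since each $v(E_i\cap B_{i,k})\subset\R$ has diameter at most $C_{**}(L+2^i)r_{i,k}$, it lies in an interval of that length, which can be partitioned into arbitrarily short subintervals; summing over $k$ and then over $i$,
\begin{equation}\label{plan-total}
\H^1\Bigl(v\bigl(\textstyle\bigcup_{i\ge0}E_i\bigr)\Bigr)\le\sum_{i\ge0}2C_{**}(L+2^i)\min(\delta,\varphi(2^{i-1})).
\end{equation}

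The crux is to bound the right-hand side of (\ref{plan-total}). Let $i_*=\min\{i\ge1:\varphi(2^{i-1})<\delta\}$ (finite since $\varphi(t)\to0$; the case $\varphi\equiv0$ being even easier). In the terms carrying the factor $2^i$: those with $i<i_*$ sum to $\le 2^{i_*+1}C_{**}\delta$, and since $2^{i_*-2}\delta\le2^{i_*-2}\varphi(2^{i_*-2})\le C\eta$ by (\ref{plan-ad}) and monotonicity of $\varphi$, this is $\le C_1\eta$; those with $i\ge i_*$ sum to $\le\sum_{i\ge1}2C_{**}2^i\varphi(2^{i-1})\le C_2\eta$, by comparison of the series with $\int_0^\infty\varphi$ in (\ref{plan-ad}). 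In the terms carrying the factor $L$ we use $\min(\delta,\varphi(2^{i-1}))=\delta$ for $i<i_*$ and $\le C\eta\,2^{-(i-1)}$ for $i\ge i_*$, obtaining $\le2C_{**}L\bigl(i_*\delta+4C\eta\,2^{-i_*}\bigr)$; since $2^{i_*}\le 4C\eta/\delta$ we have $i_*\lesssim\log(\eta/\delta)$, so for $\eta$ fixed both $i_*\delta$ and $2^{-i_*}$ tend to $0$ as $\delta\to0$ (note $i_*\to\infty$). Hence the right-hand side of (\ref{plan-total}) is $\le\tilde C\eta+\psi_\eta(\delta)$ with $\psi_\eta(\delta)\to0$ as $\delta\to0$. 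One therefore first fixes $\eta$ with $\tilde C\eta<\varepsilon/4$ (which also fixes the finite constant $L=L(\eta)$) and then $\delta$ with $\psi_\eta(\delta)<\varepsilon/4$, so that $\H^1(v(\bigcup_{i\ge0}E_i))<\varepsilon/2$.

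It remains to treat the exceptional set $E_\infty$, on which $\M\nabla v_1=\infty$; it has $\H^1_\infty(E_\infty)=0$, hence $\H^1(E_\infty)=0$, and a routine covering argument (covering $E_\infty$ by balls of arbitrarily small individual and total radius and combining Lemma~\ref{lb3} with the absolute continuity of $\int|\nabla^nv_1|$, respectively using the $\H^1$-a.e.\ differentiability of $v$ on the part of $E_\infty$ where this holds) shows that $\H^1(v(E_\infty))$ can likewise be made $<\varepsilon/2$. Combining, $\H^1(v(E))<\varepsilon$, which is the first assertion; letting $\delta\downarrow0$ with $\varepsilon$ arbitrary gives $\H^1(v(E))=0$ whenever $\H^1(E)=0$. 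The main obstacle is the summation (\ref{plan-total}): the geometric growth $2^i$ coming from the maximal-function thresholds must be \emph{exactly} absorbed by the decay of $\varphi$ furnished by Adams' inequality (\ref{plan-ad}), and one must keep track of the fact that the (possibly large, $\eta$-dependent) Lipschitz constant $L$ of the smooth part is fixed only after $\eta$, so it may only multiply quantities that are small, for that already-fixed $\eta$, in the limit $\delta\to0$.
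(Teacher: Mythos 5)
Your overall strategy is genuinely different from the paper's. The paper covers $E$ by dyadic cubes, passes to a $1$-regular subfamily (Lemma~\ref{lb5.1}), and then controls $\sum_\alpha R(I_\alpha,0)$ via Maz'ya's embedding Theorem~\ref{lb2} applied to the measure $\mu=\sum_\alpha\ell(I_\alpha)^{1-n}1_{I_\alpha}\Le^n$ (this is Lemma~\ref{Thh3.3} with $k=0$); the conclusion follows since $\diam v(I_\alpha)\le 2\sup_{I_\alpha}|v-P_{I_\alpha,0}[v]|\le CR(I_\alpha,0)$. You instead stratify $E$ by level sets of the maximal function $\M\nabla v_1$ and invoke Adams' Theorem~\ref{lb7} together with Corollary~\ref{lb10}. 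On the part $\bigcup_{i\ge0}E_i$ where $\M\nabla v_1<\infty$, your bookkeeping is correct, including the delicate splitting at the crossover index $i_*$ where the budget $\delta$ takes over from $\varphi$; this is a valid and interesting alternative route, and it trades Maz'ya's theorem plus the dyadic-regularization lemma for Adams' theorem plus the maximal-function oscillation estimate.

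However, the treatment of $E_\infty=E\cap\{\M\nabla v_1=\infty\}$ is a genuine gap, and the ``routine covering argument'' you gesture at does not close it. Note first that $\{\M\nabla v_1=\infty\}=\{\M\nabla v=\infty\}=:N$ is a fixed set with $\H^1_\infty(N)=0$, independent of $\eta$, so re-splitting $v$ buys nothing: you must show $\H^1(v(N))=0$ outright. Your fix~(a) --- cover $N$ by small cubes and use Lemma~\ref{lb3} with $k=0$ plus absolute continuity of $\int|\nabla^n v_1|$ --- only controls the second term $\|\nabla^n v\|_{\LL^1(I)}$ of $R(I,0)$. The first term $\ell(I)^{1-n}\|\nabla v\|_{\LL^1(I)}$, summed over an arbitrary cover, is precisely the quantity that the paper's regularization plus Theorem~\ref{lb2} is built to tame; nothing in your sketch controls it, and indeed it can be large for a bad cover. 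Your fix~(b) --- $\H^1$-a.e.\ differentiability --- handles $N\setminus A_v$ (there $\nabla v(x)$ is a genuine finite derivative, so a decomposition into pieces of bounded pointwise Lipschitz constant gives $\H^1(v(N\setminus A_v))=0$), but it leaves $N\cap A_v$ untouched; the paper obtains $\H^1(v(A_v))=0$ as a \emph{consequence} of Theorem~\ref{th3.3}, so invoking it here is circular. To close this hole you would effectively need to import the regular-family/Maz'ya argument anyway, which is exactly why the paper's proof avoids maximal functions and the attendant infinity set altogether.
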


\noindent For the plane case, $n=2$, Theorem~\ref{th3.3} was
obtained in the paper \cite{BKK}.

\noindent For the remainder of this section we fix a function
$v\in \WW^{n,1}(\R^n)$. To prove Theorem \ref{th3.3},  we need
some preliminary lemmas that we turn to next.

\noindent By {\it a dyadic interval} we understand an interval of
the form
$[\frac{k_1}{2^m},\frac{k_1+1}{2^m}]\times\dots\times[\frac{k_n}{2
^m},\frac{k_n+1}{2^m}]$,
where $k_i,m$ are integers. The following assertion is
straightforward, and hence we omit its proof here.

\begin{lem}\label{lemD}{\sl
For any $n$-dimensional interval $I\subset\R^n$ there exist dyadic
intervals $Q_1,\dots,Q_{2^n}$ such that $I\subset Q_1\cup\dots\cup
Q_{2^n}$ and $\ell(Q_1)=\dots=\ell(Q_{2^n})\le2\ell(I)$.
}\end{lem}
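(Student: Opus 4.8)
The plan is to reduce the statement to a one-dimensional observation applied coordinatewise. First I would fix the dyadic scale adapted to $I$: writing $l=\ell(I)$, let $m$ be the unique integer with $2^{-m-1}<l\le 2^{-m}$, i.e.\ $2^{-m}$ is the smallest integer power of $2$ that is $\ge l$. Then on the one hand $2^{-m}\ge l$, so each one-dimensional factor of $I$ has length at most $2^{-m}$, and on the other hand $2^{-m}<2l$, which already secures the sidelength bound $\ell(Q_j)=2^{-m}\le 2\ell(I)$ demanded in the conclusion.

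Next I would treat each coordinate separately. Write $I=\prod_{i=1}^n[a_i,a_i+l]$ and put $k_i=\lfloor 2^m a_i\rfloor\in\Z$, so that $a_i\in[k_i2^{-m},(k_i+1)2^{-m}]$. Since $l\le 2^{-m}$, the right endpoint obeys $a_i+l\le (k_i+1)2^{-m}+2^{-m}=(k_i+2)2^{-m}$, hence
$$[a_i,a_i+l]\subset\bigl[k_i2^{-m},(k_i+1)2^{-m}\bigr]\cup\bigl[(k_i+1)2^{-m},(k_i+2)2^{-m}\bigr]=:J_i^0\cup J_i^1,$$
a union of two consecutive dyadic intervals of common sidelength $2^{-m}$.

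Then I would assemble the product decomposition: since $I=\prod_{i=1}^n[a_i,a_i+l]\subset\prod_{i=1}^n(J_i^0\cup J_i^1)=\bigcup_{\sigma\in\{0,1\}^n}\prod_{i=1}^n J_i^{\sigma_i}$, it suffices to check that each $Q_\sigma:=\prod_{i=1}^n J_i^{\sigma_i}$ is a dyadic interval in the sense of the definition in the text. This is immediate: $Q_\sigma=\prod_{i=1}^n[\,(k_i+\sigma_i)2^{-m},(k_i+\sigma_i+1)2^{-m}\,]$, which has the required form with the single integer $m$ and the integers $k_i+\sigma_i$. There are exactly $2^n$ such $Q_\sigma$, all of sidelength $2^{-m}\le 2\ell(I)$; relabelling them $Q_1,\dots,Q_{2^n}$ finishes the argument.

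There is no genuine obstacle here — the statement is elementary. The only two points requiring a moment's attention are: (i) making sure the chosen scale satisfies \emph{both} $2^{-m}\ge\ell(I)$ (so each coordinate slab of $I$ meets at most two consecutive dyadic intervals) and $2^{-m}\le 2\ell(I)$ (the size constraint), which is exactly the content of the double inequality $2^{-m-1}<l\le 2^{-m}$; and (ii) observing that a Cartesian product of $n$ dyadic intervals sharing one common sidelength $2^{-m}$ is again a dyadic interval, whereas products of dyadic intervals at \emph{different} scales need not be — this is why all coordinates must use the same $m$, as arranged above.
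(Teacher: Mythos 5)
Your argument is correct. The paper omits a proof of Lemma~\ref{lemD}, noting only that the assertion is straightforward, so there is nothing to compare against; your coordinate-wise construction — choosing the unique dyadic scale $2^{-m}$ with $2^{-m-1}<\ell(I)\le 2^{-m}$, covering each one-dimensional factor of $I$ by two consecutive dyadic intervals at that common scale, and taking the $2^n$ products — is exactly the natural argument one would supply.
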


Let  $\{ I_\alpha \}_{\alpha \in A}$ be a family of
$n$-dimensional dyadic intervals. We say that the family $\{
I_\alpha \}$ is $k$-{\it regular}, if for any $n$-dimensional
dyadic interval $Q$ the estimate
\begin{equation}\label{q8}
\ell(Q)^k\ge\sum\limits_{\alpha : I_\alpha\subset
Q}\ell(I_\alpha)^k
\end{equation}
holds.

The next two assertions  are the multidimensional analogs of the
corresponding plane results from the paper \cite{BKK}.

\begin{lem}\label{lb5.1}{\sl
Let $k \in \{ 1,\dots,n \}$ and let  $I_\alpha$ be a family of
$n$-dimensional dyadic intervals. Then there exists
 a $k$-regular family $J_\beta$ of $n$-dimensional
dyadic intervals such that $\cup_\alpha I_\alpha\subset\cup_\beta
J_\beta$ and
$$
\sum\limits_{\beta}\ell(J_\beta)^k\le\sum\limits_{\alpha}\ell(I_\alpha)^k.
$$
}\end{lem}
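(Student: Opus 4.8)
The plan is to perform a greedy ``merging'' procedure: whenever the $k$-regularity inequality~(\ref{q8}) fails for some dyadic interval $Q$, replace all the intervals $I_\alpha$ contained in $Q$ by the single interval $Q$. First I would reduce to a countable family and, by Lemma~\ref{lemD} applied in the dyadic setting (or simply by noting that the $I_\alpha$ are already dyadic), assume without loss of generality that the $I_\alpha$ are dyadic. The key observation is monotonicity of the quantity $\sum \ell(\cdot)^k$ under such a merge: if $Q$ violates~(\ref{q8}), i.e.\ $\sum_{\alpha:I_\alpha\subset Q}\ell(I_\alpha)^k>\ell(Q)^k$, then replacing $\{I_\alpha\subset Q\}$ by $Q$ strictly decreases the sum while preserving (indeed enlarging) the union $\bigcup I_\alpha$. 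Thus the total sum $\sum_\alpha\ell(I_\alpha)^k$ can only go down, which gives both the containment $\bigcup_\alpha I_\alpha\subset\bigcup_\beta J_\beta$ and the estimate $\sum_\beta\ell(J_\beta)^k\le\sum_\alpha\ell(I_\alpha)^k$.

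The main technical issue is to make this merging process terminate (or converge) at a genuinely $k$-regular family. I would organize it as follows. Let $S_0=\sum_\alpha \ell(I_\alpha)^k<\infty$ (if this is infinite the statement is trivial, taking $J_\beta$ to be the single cube $[0,1]^n$ translated to cover everything, or more carefully one argues the conclusion is vacuous). At each stage, consider all maximal dyadic cubes $Q$ that violate~(\ref{q8}); these maximal violators are pairwise disjoint, so one can merge all of them simultaneously in one step, obtaining a new family whose $\sum\ell(\cdot)^k$ has dropped. The subtle point is that a single merge can create new violators at larger scales (the newly inserted big cube $Q$ may now, together with its dyadic siblings, overfill the parent of $Q$), so the process need not stop after finitely many steps. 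To handle this I would pass to the limit: the families are nested in the sense that unions only grow and sums only shrink and stay nonnegative, so the sums converge; one then takes $\{J_\beta\}$ to be the set of dyadic cubes that appear in ``eventually all'' the families, i.e.\ cubes that are inserted at some stage and never afterwards absorbed into a larger cube. A cleaner alternative, which I expect to be the one used: for each point $x\in\bigcup_\alpha I_\alpha$ let $J(x)$ be the \emph{largest} dyadic cube $Q$ containing $x$ for which $\sum_{\alpha:I_\alpha\subset Q}\ell(I_\alpha)^k>\ell(Q)^k$ fails to be improvable — more precisely, define $J_\beta$ directly as the collection of maximal dyadic cubes $Q$ such that $\sum_{\alpha:I_\alpha\subset Q}\ell(I_\alpha)^k\le\ell(Q)^k$ but this fails for the dyadic parent of $Q$; together with, for any $I_\alpha$ not contained in any such $Q$, the cube $I_\alpha$ itself.

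With $\{J_\beta\}$ defined this way, I would check the three required properties. $k$-regularity: given an arbitrary dyadic $Q$, the $J_\beta$ contained in $Q$ are pairwise disjoint dyadic subcubes of $Q$; I would need the elementary fact that for any collection of pairwise disjoint dyadic subcubes $Q_i\subset Q$ one has $\sum_i\ell(Q_i)^k\le\ell(Q)^k$ (true since $k\ge 1$: indeed $\sum_i\ell(Q_i)^k\le\big(\sum_i\ell(Q_i)^{?}\big)$ — more simply, group the $Q_i$ by the scale; at each scale the number of disjoint dyadic cubes of sidelength $2^{-j}\ell(Q)$ inside $Q$ is at most $2^{nj}$, and one checks $\sum_j (\#\text{at scale }j)\,2^{-jk}\ell(Q)^k\le\ell(Q)^k$ by disjointness and $k\ge1$ via a volume/packing argument). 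Covering: every $I_\alpha$ is either one of the $J_\beta$ or is contained in one of the maximal ``good'' cubes $Q$, so $\bigcup I_\alpha\subset\bigcup J_\beta$. The sum estimate: this is where I would invoke the merging/monotonicity principle — each $J_\beta$ that is not itself some $I_\alpha$ arose by absorbing a set of $I_\alpha$'s whose $\ell(\cdot)^k$-sum was at least $\ell(J_\beta)^k$ (by the failure of~(\ref{q8}) for its parent, combined with the fact that $J_\beta$ is maximal good — one needs that the parent's violation ``localizes'' to $J_\beta$ or to a controlled neighbourhood), and distinct $J_\beta$ absorb disjoint sets of $I_\alpha$'s; summing gives $\sum_\beta\ell(J_\beta)^k\le\sum_\alpha\ell(I_\alpha)^k$. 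The hard part is precisely this last bookkeeping: ensuring that the greedy construction does not ``double count'' or leave gaps, i.e.\ that the map sending each $I_\alpha$ to the $J_\beta$ containing it is well-defined and that the fibers carry enough mass; I would address it by the simultaneous-maximal-merge formulation above, where disjointness of maximal violators makes each step's bookkeeping transparent, and then passing to the limit.
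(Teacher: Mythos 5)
The crux of the lemma is proved in the paper by a single clean definition, with no merging process or limit argument: one sets
\[
\mathcal F=\Bigl\{J\ \text{dyadic}:\ \sum_{I_\alpha\subset J}\ell(I_\alpha)^k\ \geq\ \ell(J)^k\Bigr\},
\]
notes that every $I_\alpha$ belongs to $\mathcal F$, and takes $\{J_\beta\}$ to be the \emph{maximal} elements of $\mathcal F$. Disjointness of the $J_\beta$ is automatic for dyadic cubes, the covering and the sum estimate follow at once from $I_\alpha\in\mathcal F$ and disjointness, and $k$-regularity follows because if $J_\beta\subsetneq Q$ then $Q\notin\mathcal F$, whence
\[
\sum_{J_\beta\subset Q}\ell(J_\beta)^k\ \leq\ \sum_{J_\beta\subset Q}\ \sum_{I_\alpha\subset J_\beta}\ell(I_\alpha)^k\ \leq\ \sum_{I_\alpha\subset Q}\ell(I_\alpha)^k\ <\ \ell(Q)^k.
\]
Your ``cleaner alternative'' is pointing at this construction but has it backwards: you propose taking maximal \emph{good} (underfull) cubes whose parents are bad, plus leftover $I_\alpha$'s, rather than maximal \emph{bad} (overfull) cubes directly. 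That complicates both the disjointness and the bookkeeping you flag at the end, all of which disappear with the paper's definition.

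The genuine gap, however, is in your verification of $k$-regularity. You invoke the ``elementary fact'' that for pairwise disjoint dyadic subcubes $Q_i\subset Q$ one has $\sum_i\ell(Q_i)^k\le\ell(Q)^k$. This is \emph{false} for $k<n$: take $n=2$, $k=1$, $Q$ a unit square, and the four dyadic quarters $Q_1,\dots,Q_4$ of $Q$; then $\sum_i\ell(Q_i)=4\cdot\tfrac12=2>1=\ell(Q)$. The packing/volume heuristic you sketch also does not salvage it, since $\sum_j 2^{nj}\cdot 2^{-jk}$ diverges when $k<n$. The point is that $k$-regularity cannot be deduced from disjointness and geometry alone; it must use the defining property of the $J_\beta$. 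In the paper's construction each $J_\beta\in\mathcal F$ carries at least $\ell(J_\beta)^k$ worth of $\sum\ell(I_\alpha)^k$-mass from the original family, and the maximality of $J_\beta$ forces any strictly larger $Q$ to be \emph{underfull}, which is exactly what caps $\sum_{J_\beta\subset Q}\ell(J_\beta)^k$ by $\ell(Q)^k$. Your proposal never records which $I_\alpha$'s ``fund'' each $J_\beta$, so this mechanism is unavailable and the $k$-regularity claim is left unproved. (Your initial merging idea also has the convergence/termination worry you correctly identify, but that becomes moot once you pass to the direct maximal-cube definition.)
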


\begin{proof}
Define
$$
\mathcal F= \left\{ J \, : \, J\subset\mathbb R^n \text { dyadic
interval}; \sum_{I_\alpha\subset J} \ell(I_\alpha)^k\geq \ell(J)^k
\right\}.
$$
Thus $I_\alpha\in \mathcal F$ for each $\alpha$. Denote by
$\mathcal F^* =\{J_\beta\}$ the collection of maximal elements of
$\mathcal F$. Clearly
\begin{equation}
\label{cb9} \bigcup_\alpha I_\alpha \subset \bigcup_\beta J_\beta
,
\end{equation}
and since dyadic intervals are either disjoint or contained in one
another, the $\{J_\beta\}$ are mutually disjoint\footnote{ By {\it
disjoint dyadic intervals} we mean intervals with disjoint
interior.}. It follows that
\begin{equation}
\label{cb10} \sum_\beta\ell(J_\beta)^k\leq
\sum_\beta\sum_{I_\alpha\subset
 J_\beta} \ell(I_\alpha)^k \leq \sum_\alpha\ell
(I_\alpha)^k.
\end{equation}
Observe also that for any dyadic interval $Q\subset\mathbb R^n$,
\begin{equation}
\label{cb11} \sum_{J_\beta\subset Q}\ell(J_\beta)^k \leq
\ell(Q)^k.
\end{equation}
Indeed, if $J_\beta\subset Q$ for some $\beta$, then clearly
either $J_\beta=Q$ or $J_\beta \neq Q$. In the first case the
estimate
is evident, and in the second case we deduce from maximality of
$J_\beta$
that $Q\not\in \mathcal F$, and hence that
\begin{equation*}
 \sum_{J_\beta\subset Q}\ell(J_\beta)^k \leq
\sum_{I_\alpha\subset Q}\ell(I_\alpha)^k < \ell(Q)^k.
\end{equation*}

\end{proof}

\begin{lem}\label{Thh3.3}{\sl
Let $k=0,\dots,n-1$. Then for each $\varepsilon>0$ there exists
$\delta=\delta(\varepsilon,v,k)>0$ such that for any
$(k+1)$-regular family $I_\alpha\subset\R^n$ of $n$-dimensional
dyadic intervals we have if
$\sum_\alpha\ell(I_\alpha)^{k+1}<\delta$, then $\sum_\alpha
R(I_\alpha,k)<\varepsilon$. }\end{lem}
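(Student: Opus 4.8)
The plan is to split $\sum_\alpha R(I_\alpha,k)$ according to the two terms in the definition of $R(I,k)$ from Lemma~\ref{lb3}, namely
\[
\sum_\alpha R(I_\alpha,k)=\Sigma_1+\Sigma_2,\qquad \Sigma_1:=\sum_\alpha\frac{\|\nabla^{k+1}v\|_{\LL^1(I_\alpha)}}{\ell(I_\alpha)^{n-k-1}},\qquad \Sigma_2:=\sum_\alpha\|\nabla^nv\|_{\LL^1(I_\alpha)},
\]
and to estimate each by roughly $\varepsilon/2$. The key structural observation, used throughout, is that any $(k+1)$-regular family of dyadic intervals is automatically pairwise disjoint (up to null sets): applying~(\ref{q8}) with $Q=I_{\alpha_0}$ forces the only index $\alpha$ with $I_\alpha\subseteq I_{\alpha_0}$ to be $\alpha_0$ itself, and distinct dyadic cubes that are not nested have disjoint interiors. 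Granting this, if we require $\delta\le 1$ then every $\ell(I_\alpha)<1$, hence $\ell(I_\alpha)^n\le\ell(I_\alpha)^{k+1}$, so $\Le^n\big(\bigcup_\alpha I_\alpha\big)=\sum_\alpha\ell(I_\alpha)^n\le\sum_\alpha\ell(I_\alpha)^{k+1}<\delta$; and by disjointness $\Sigma_2=\int_{\bigcup_\alpha I_\alpha}|\nabla^nv|$. Since $\nabla^nv\in\LL^1(\R^n)$, absolute continuity of the integral provides $\delta_1>0$ with $\Sigma_2<\varepsilon/4$ whenever $\delta\le\delta_1$. When $k=n-1$ one has $n-k-1=0$, so $R(I,n-1)=2\|\nabla^nv\|_{\LL^1(I)}$ and $\sum_\alpha R(I_\alpha,n-1)=2\Sigma_2<\varepsilon$; thus we may assume $k\le n-2$ and put $m:=k+1\in\{1,\dots,n-1\}$.

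To control $\Sigma_1$ for $k\le n-2$ I would encode the family into a measure and apply the refined Sobolev embedding Theorem~\ref{lb2}, after first peeling off a smooth part. By Lemma~\ref{lemc3} write $v=v_0+v_1$ with $v_0\in\CC_0^\infty(\R^n)$ and $\|v_1\|_{\WW^{n,1}}<\eta$, where $\eta$ will be fixed \emph{before} $\delta$. Setting $M_0:=\|\nabla^mv_0\|_{\LL^\infty}$, the $v_0$-contribution to $\Sigma_1$ is at most $M_0\sum_\alpha\Le^n(I_\alpha)/\ell(I_\alpha)^{n-m}=M_0\sum_\alpha\ell(I_\alpha)^{m}<M_0\delta$. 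For the $v_1$-contribution introduce the positive measure $\mu(E):=\sum_\alpha\ell(I_\alpha)^{-(n-m)}\,\Le^n(E\cap I_\alpha)$, so that this contribution is exactly $\int_{\R^n}|\nabla^mv_1|\,\dd\mu$. The crucial claim is that $\mu/C_n$ has property $(*-(n-m))$ of Definition~\ref{db1}, i.e. $\mu(I)\le C_n\,\ell(I)^{m}$ for all cubes $I$: for a dyadic cube $Q$ one writes $\mu(Q)$ as the contribution of the $I_\alpha\subseteq Q$, equal to $\sum_{I_\alpha\subseteq Q}\ell(I_\alpha)^{m}\le\ell(Q)^{m}$ precisely by~(\ref{q8}), plus the contribution of the (at most one, by disjointness) $I_\alpha\supsetneq Q$, which is $\le\ell(Q)^n/(2\ell(Q))^{n-m}\le\ell(Q)^m$; Lemma~\ref{lemD} then passes from dyadic cubes to arbitrary cubes. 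Since $n-m\ge 1$ and $\partial^\beta v_1\in\WW^{n-m,1}(\R^n)$ for every $|\beta|=m$, Theorem~\ref{lb2} (applied with $l=n-m$) gives $\int|\partial^\beta v_1|\,\dd\mu\le C_1\|\nabla^{n-m}\partial^\beta v_1\|_{\LL^1}\le C_1\|\nabla^nv_1\|_{\LL^1}$ with $C_1=C_1(n,k)$; summing over the finitely many $|\beta|=m$ yields $\int|\nabla^mv_1|\,\dd\mu\le C'\|\nabla^nv_1\|_{\LL^1}<C'\eta$ with $C'=C'(n,k)$.

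It remains to fix the constants in the correct order: choose $\eta:=\varepsilon/(8C')$, which determines the splitting $v=v_0+v_1$ and hence $M_0$; then choose $\delta:=\min\{1,\delta_1,\varepsilon/(8M_0)\}$. For any $(k+1)$-regular family with $\sum_\alpha\ell(I_\alpha)^{k+1}<\delta$ this gives $\Sigma_1<M_0\delta+\varepsilon/8\le\varepsilon/4$ and $\Sigma_2<\varepsilon/4$, so $\sum_\alpha R(I_\alpha,k)<\varepsilon$; and by construction $\delta=\delta(\varepsilon,v,k)$, as asserted.

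The main difficulty I anticipate is recognizing the right mechanism for $\Sigma_1$: that $(k+1)$-regularity is exactly the packing condition making the natural weight $\mu$ of ``$(*-(n-m))$'' type, so that the advanced embedding Theorem~\ref{lb2} can be applied to the order-$(k+1)$ derivatives of $v$ --- which, fortuitously, lie in $\WW^{n-k-1,1}$, so that the orders match up ($(n-k-1)+(k+1)=n$). A related point one must not overlook is that Theorem~\ref{lb2} only delivers a bound of the form $\lesssim\|\nabla^nv_1\|_{\LL^1}$, not smallness, and a measure of small total mass does not integrate a fixed $\LL^1$ function to something small; hence the preliminary splitting $v=v_0+v_1$ is essential, with $\eta$ (rather than $\delta$) absorbing the $v_1$-piece. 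The pairwise disjointness of regular families, and the bookkeeping with dyadic cubes in the bound for $\mu(Q)$, are then elementary.
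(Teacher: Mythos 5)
Your proof is correct and essentially the same as the paper's: the same split of $\sum_\alpha R(I_\alpha,k)$ into two sums, the same decomposition $v=v_0+v_1$ via Lemma~\ref{lemc3}, the same weighted measure $\mu$ built from the regular family, verification of the $(*-l)$ property by the same dyadic bookkeeping plus Lemma~\ref{lemD}, and the same application of the Maz'ya embedding Theorem~\ref{lb2}. You add a couple of welcome clarifications that the paper leaves implicit --- explicit pairwise disjointness of a $(k+1)$-regular family, the separate trivial case $k=n-1$, careful ordering of the choice of $\eta$ before $\delta$, and the correct index $(*-(n-k-1))$ in the claim about $\mu$ (the paper's text reads $(*-(k+1))$, which is a misprint) --- but the structure of the argument is identical.
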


\begin{proof}

Fix $\varepsilon >0$ and let $I_\alpha\subset\R^n$ be a
$(k+1)$-regular family of $n$-dimensional dyadic intervals with
$\sum_\alpha\ell(I_\alpha)^{k+1}<\delta$, where $\delta >0$ will
be specified below. By virtue of Lemma~\ref{lemc3} we can find a
decomposition $v=v_0+v_1$, where $\| \nabla ^jv_{0}
\|_{\LL^{\infty}} \leq K = K(\varepsilon ,v)$ for all
$j=0,1,\dots,n$ \ and \
\begin{equation}\label{cb3''}\| \nabla^nv_{1} \|_{\LL^1} <
\varepsilon.
\end{equation}
 Assume that
\begin{equation}\label{cb8}
\sum_\alpha\ell(I_\alpha)^{k+1}< \delta<
\tfrac{1}{K+1}\varepsilon.
\end{equation}

Define the measure $\mu$ by
\begin{equation}
\label{cb12} \mu = \left(\sum_\alpha \frac
1{\ell(I_\alpha)^{n-k-1}} 1_{I_\alpha}\right) {\mathcal L}^{n},
\end{equation}
where $1_{I_\alpha}$ denotes the indicator function of the
set~$I_\alpha$.

\noindent {\bf Claim.}  $\frac1{2^{n+k+2}}\mu$ has property
$(*-(k+1))$.

\noindent Indeed, write for a dyadic interval $Q$
$$
\mu(Q) =\sum_{I_\alpha\subset Q}
\ell(I_\alpha)^{k+1}+\sum_{Q\subset I_\alpha}\ \frac
{\ell(Q)^n}{\ell(I_\alpha)^{n-k-1}} \leq 2\ell(Q)^{k+1},
$$
where we invoked (\ref{q8}) and the fact that $Q\subset I_\alpha$
for at most one $\alpha$. Then for any interval $I$ we have the
estimate $\mu(I)\le 2^{n+k+2}\ell(I)^{k+1}$ (see
Lemma~\ref{lemD}). This proves the claim.


Now return to the estimate of $\sum_\alpha R(I_\alpha,k)$. In
addition to (\ref{cb8}) we now decrease $\delta >0$ further such
that
\begin{equation}
\label{cb13}
\sum\limits_{\alpha}\|\nabla^nv\|_{\LL^1(I_\alpha)}<\varepsilon/2.
\end{equation}
By definition of $R(I,k)$ (see Lemma~\ref{lb3})  and
properties~(\ref{cb3''}), (\ref{cb3}) (applied to
$f=\nabla^{k+1}v_1,$ $l=n-k-1$\,),
 we have
\begin{eqnarray*}
\sum_{\alpha} R(I_\alpha,k) &=& \sum_{\alpha} \|\nabla^n
v\|_{\LL^1(I_\alpha)}+
\sum_{\alpha} \frac1{\ell (I_\alpha)^{n-k-1}}\int_{I_\alpha}\!
|\nabla^{k+1} v|\\
&\leq&  \varepsilon/2  +\tfrac{K}{K+1}\varepsilon
+\sum_{\alpha} \frac1{\ell (I_\alpha)^{n-k-1}}\int_{I_\alpha} \!
|\nabla^{k+1}v_1|\\
&=& C'\varepsilon +C\int \! |\nabla^{k+1}v_1| \,d\mu \leq
C''\varepsilon .
\end{eqnarray*}
Since $\ve >0$ was arbitrary, the proof of Lemma~\ref{Thh3.3} is
complete.
\end{proof}

\begin{proof}[Proof of Theorem~\ref{th3.3}] We have an
obvious estimate $\diam v(I)\le C R(I,0)$ for any $n$-dimensional
interval~$I\subset\R^n$ (see Lemma~1.4). Fix $\varepsilon>0$ and
take $\delta=\delta(\varepsilon)$ from Theorem~\ref{Thh3.3} for
$k=0$, i.e., for any $1$-regular family $I_\alpha\subset\R^n$ of
$n$-dimensional dyadic intervals we have if
$\sum_\alpha\ell(I_\alpha)<\delta$, then $\sum_\alpha
R(I_\alpha,0)<\varepsilon$, consequently, $\sum_\alpha \diam
v(I_\alpha)<C\varepsilon$.  Now the assertion of
Theorem~\ref{th3.3} follows from Lemmas~2.2--2.3 (by these Lemmas,
there exists $\delta_1>0$ such that if $\H^1_\infty(E)<\delta_1$,
then $E$ can be covered by a $1$-regular family
$I_\alpha\subset\R^n$ of $n$-dimensional dyadic intervals with
$\sum_\alpha\ell(I_\alpha)<\delta$).
\end{proof}

\section{On approximation of $\WW^{l,1}$ Sobolev functions}

\begin{ttt}\label{Th_ap}{\sl
Let $k,l\in\{1,\dots,n\}$, $k\le l$. Then for any
$f\in\WW^{l,1}(\R^n)$ and for each $\varepsilon>0$ there exist an
open set $U\subset\R^n$ and a function $g\in C^k(\R^n)$ such that
$\H^{n-l+k}_\infty(U)<\varepsilon$ and $f\equiv g$, $\nabla^mf
\equiv \nabla^mg$ on $\R^n\setminus U$ for $m=1,\dots,k$.}
\end{ttt}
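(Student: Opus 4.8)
The plan is to use a Whitney-type extension/gluing argument driven by the pointwise estimate of Lemma~\ref{dorl2}, combined with the Hausdorff-content control of maximal functions from Theorem~\ref{lb7}. First I would fix $f\in\WW^{l,1}(\R^n)$ and, using Lemma~\ref{lemc3}, split $f=f_0+f_1$ with $f_0\in C_0^\infty$ and $\|\nabla^l f_1\|_{\LL^1}$ as small as we please; since the conclusion is trivial for the smooth part $f_0$, it suffices to treat $f_1$, so we may as well assume $\|\nabla^l f\|_{\LL^1}<\eta$ for a small parameter $\eta$ to be chosen in terms of $\varepsilon$. Now apply Theorem~\ref{lb7} to $\nabla^l f\in\WW^{0,\dots}$—more precisely to the components of $\nabla^{l-1}f\in\WW^{1,1}$ iterated, or directly to $g:=|\nabla^l f|$ viewed as giving a maximal function $\M(\nabla^l f)$: the relevant statement is
\[
\int_0^\infty \H^{n-1}_\infty\!\bigl(\{x:\M(\nabla^l f)(x)\ge\lambda\}\bigr)\,\dd\lambda\le C\|\nabla^l f\|_{\LL^1},
\]
and iterating (applying Theorem~\ref{lb7} with $k=l-k'$ to $\nabla^{k'}f$ for the appropriate order) yields that the set
\[
U_\lambda:=\{x\in\R^n:\M(\nabla^l f)(x)>\lambda\}
\]
satisfies $\H^{n-l+k}_\infty(U_\lambda)\le C\lambda^{-1}\|\nabla^l f\|_{\LL^1}$ for a suitable scaling; choosing $\lambda$ and then $\eta=\eta(\varepsilon,\lambda)$ appropriately makes $\H^{n-l+k}_\infty(U_\lambda)<\varepsilon$. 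Enlarge $U_\lambda$ slightly to an open set $U$ with the same content bound (up to a fixed constant, which we absorb by shrinking $\varepsilon$ beforehand).

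The key structural point is that on the "good set" $G:=\R^n\setminus U_\lambda$ the function $f$ and its derivatives up to order $k$ behave like those of a $C^k$ function in the Whitney sense. Concretely, for $x,y\in G$ and any $n$-dimensional interval $I$ containing both with $\ell(I)\sim|x-y|$, Lemma~\ref{dorl2} gives, for each $m=0,\dots,k$,
\[
\bigl|\nabla^m f(x)-\nabla^m P_{I,l-1}[f](x)\bigr|\le C\,\ell(I)^{l-m}\,\M(\nabla^l f)(x)\le C\lambda\,\ell(I)^{l-m},
\]
and the analogous bound at $y$; subtracting the two Taylor polynomials $P_{I,l-1}[f]$ at $x$ and comparing with the jet at $y$, one deduces the Whitney compatibility conditions
\[
\Bigl|\nabla^m f(x)-\sum_{|\beta|\le k-m}\tfrac1{\beta!}\nabla^{m+\beta}f(y)\,(x-y)^\beta\Bigr|=o(|x-y|^{k-m})
\]
uniformly, with the modulus controlled by $\lambda$ (here one uses that $l\ge k$ so that the polynomial $P_{I,l-1}[f]$ has high enough degree). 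Thus the collection $\{\nabla^m f\restriction_G\}_{m\le k}$ is a Whitney field of class $C^k$ on the closed set $G$, and the Whitney extension theorem produces $g\in C^k(\R^n)$ with $\nabla^m g=\nabla^m f$ on $G$ for all $m\le k$. Taking $U$ to be the (open) complement of $G$ finishes the argument for $f_1$, and adding back $f_0$ preserves everything.

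The main obstacle is verifying the Whitney compatibility conditions with a \emph{uniform} (not merely pointwise $o$) modulus of continuity on $G$: one must show that the difference between the degree-$l-1$ Taylor polynomials of $f$ based on nested intervals $I\subset I'$ is controlled, so that a single limiting jet $(\nabla^m f(x))_{m\le k}$ at each $x\in G$ is well defined and the errors telescope geometrically across dyadic scales. This requires a careful chaining estimate: writing $|P_{I,l-1}[f]-P_{I',l-1}[f]|$ on $I$ in terms of $\|\nabla^l f\|_{\LL^1(I')}/\ell(I')^{\,\cdot}$ plus maximal-function terms (as in Lemma~\ref{lb3} and Lemma~\ref{dorl2}), summing over a dyadic chain, and using that $x\in G$ forces $\M(\nabla^l f)(x)\le\lambda$ at every scale. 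Once this uniform control is in place, the content estimate for $U$ and the application of Whitney extension are routine, so essentially all the work is concentrated in this chaining/compatibility step.
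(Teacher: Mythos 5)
Your broad plan (split off a smooth part, use Theorem~\ref{lb7} to control the content of an exceptional set of bad maximal‐function values, apply Lemma~\ref{dorl2} and Whitney extension) is the right framework, and it is essentially what the paper does. However, two concrete steps in your outline do not go through as written.

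First, the maximal function you control is the wrong one. The inequality $\int_0^\infty\H^{n-1}_\infty(\{\M(\nabla^l f)\ge\lambda\})\,\dd\lambda\le C\|\nabla^l f\|_{\LL^1}$ is not an instance of Theorem~\ref{lb7}: applying that theorem to $g=\nabla^l f$ would require $\nabla^l f\in\WW^{m,1}$ for some $m\ge1$, i.e.\ $f\in\WW^{l+1,1}$, which is not available. For $\nabla^l f\in\LL^1$ all one has is the weak-type $(1,1)$ bound, i.e.\ $\Le^n$-control, not $\H^{n-l+k}_\infty$-control. To hit the exponent $n-l+k$ one must take the maximal function of $\nabla^k(\cdot)$ of a function whose $l$-th derivatives are small in $\LL^1$, so that Theorem~\ref{lb7} with parameter $l-k$ applies; this is precisely why the paper works with $\M\nabla^k\tilde f_j$ where $\tilde f_j=f-f_j$ and $f_j\in C^\infty_0$ with $\|\nabla^l\tilde f_j\|_{\LL^1}<4^{-j}$.

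Second, your modulus-of-continuity step does not close. If you only know $\M(\nabla^k f)(x)\le\lambda$ on the good set, Lemma~\ref{dorl2} gives a remainder bound of order $\lambda|x-y|^{k-|\alpha|}$, which is small but not $o(|x-y|^{k-|\alpha|})$; no amount of dyadic chaining turns a fixed threshold $\lambda$ into a vanishing modulus. Alternatively, if you push to the degree-$(l-1)$ polynomial $P_{I,l-1}[f]$ to exploit $\ell(I)^{l-|\alpha|}=o(\ell(I)^{k-|\alpha|})$, then truncating down to a degree-$k$ jet leaves you with the uncontrolled coefficients $\partial^\gamma f(x)$ for $k<|\gamma|\le l-1$; one cannot bound $\M\nabla^m f$ for $m>k$ on a set whose complement has small $\H^{n-l+k}_\infty$ content, since Theorem~\ref{lb7} gives $\H^{n-l+m}_\infty$-smallness with $n-l+m>n-l+k$, and this does not imply $\H^{n-l+k}_\infty$-smallness. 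The paper avoids both difficulties at once: it works with the degree-$(k-1)$ polynomials $P_{I,k-1}[\tilde f_j]$, handles the top degree-$k$ term of the jet separately via $|\nabla^k\tilde f_j(x)|\le 2^{-j}$ on the good set, and then writes $|R_\alpha(x;y)|\le\bigl(C'2^{-j}+\omega_{f_j}(|x-y|)\bigr)|x-y|^{k-|\alpha|}$ with this valid \emph{simultaneously for all} $j\ge i$ on $\R^n\setminus G_i$ (where $G_i=A_k\cup\bigcup_{j\ge i}B_j$); the genuine $o(\cdot)$ modulus then comes from letting $j\to\infty$ as $|x-y|\to0$, exploiting the smoothness of the $f_j$. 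That simultaneous control over the whole tail of indices $j$, rather than a single threshold $\lambda$, is the missing ingredient in your outline.
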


The proof of Theorem~\ref{Th_ap} is based on the results of
\cite{Ad}, \cite{Dor} and on the classical Whitney Extension
Theorem:

\begin{ttt}
\label{TapW} {\sl Let $k\in\mathbb N$ and let $f=f_0, f_\alpha$ be
a finite family of functions defined on the closed set
$E\subset\R^n$, where $\alpha$ ranges over all multi-indices
$\alpha=(\alpha_1,\dots,\alpha_n)$ with $|\alpha|=\alpha_1+\dots
+\alpha_n\le k$. For $x,y\in E$ and a multi-index $\alpha$,
$|\alpha|\le k$, put
$$
T_\alpha(x;y)=\sum\limits_{|\beta|\le
k-|\alpha|}\frac1{\beta!}f_{\alpha+\beta}(x)\cdot(y-x)^\beta,
$$
$$
R_\alpha(x;y)=f_\alpha(y)-T_\alpha(x;y).
$$
Suppose that there exists a function $\omega\colon [0,+\infty)\to
[0,+\infty)$ such that $\omega(t)\to 0$ as $t\searrow 0$ and for
each multi-index $\alpha$, $|\alpha|\le k$, and for all $x,y\in E$
the estimate
\begin{equation}
\label{ape1} |R_\alpha(x;y)|\le \omega(|x-y|)|x-y|^{k-|\alpha|}
\end{equation}
holds. Then there exists  a function $g\in C^k(\R^n)$ such that
$f\equiv g$, $f_\alpha \equiv \partial^\alpha g$ \ on $E$ \ for
$|\alpha|\le k$.}
\end{ttt}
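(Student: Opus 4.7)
The plan is to prove Theorem \ref{TapW} via the classical Whitney construction: extend the data $(f_\alpha)_{|\alpha|\le k}$ from the closed set $E$ to all of $\R^n$ by taking, on the open complement $V := \R^n\setminus E$, a weighted sum of the local Taylor polynomials $T_0(p;\cdot)$ anchored at nearby points $p\in E$, with weights coming from a partition of unity subordinate to a Whitney decomposition of $V$. The compatibility hypothesis (\ref{ape1}) is precisely what is needed to make these local polynomials glue into a $C^k$ function.

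Concretely, I would first fix a Whitney decomposition $V = \bigcup_i Q_i$ into closed cubes with pairwise disjoint interiors and $c_1 \diam(Q_i)\le \dist(Q_i, E) \le c_2 \diam(Q_i)$, construct a subordinate $C^\infty$ partition of unity $\{\varphi_i\}$ on $V$ with $\supp \varphi_i$ inside a mild dilate $Q_i^*$ (finite overlap) and scaling bounds $\|\nabla^m\varphi_i\|_\infty \le C_m \ell(Q_i)^{-m}$ for $m\le k$, and select for each $i$ a nearest point $p_i \in E$ to $Q_i$ (so that $|x-p_i| \le C\dist(x,E)$ for $x\in Q_i^*$). Then define
\[
g(x) = \begin{cases} f(x), & x\in E,\\ \sum_i \varphi_i(x)\,T_0(p_i;x), & x\in V.\end{cases}
\]

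For the verification that $g \in C^k(\R^n)$ with $\partial^\alpha g|_E = f_\alpha$, on $V$ the bounded-overlap property makes $g$ locally a finite sum of smooth functions, so $g \in C^\infty(V)$; the real content lies at $x_0\in E$ approached by $x\in V$. Using $\partial^\gamma T_0(p;\cdot)=T_\gamma(p;\cdot)$, the Leibniz rule, and the identities $\sum_i \varphi_i \equiv 1$ on $V$ (hence $\sum_i \partial^{\alpha-\gamma}\varphi_i(x)= 0$ when $\gamma\ne\alpha$), one reorganises
\[
\partial^\alpha g(x) - T_\alpha(x_0;x) = \sum_i \sum_{\gamma\le\alpha}\binom{\alpha}{\gamma}\partial^{\alpha-\gamma}\varphi_i(x)\bigl[T_\gamma(p_i;x) - T_\gamma(x_0;x)\bigr].
\]
Taylor-expanding $T_\gamma(x_0;\cdot)$ around $p_i$ turns each bracket into a polynomial in $x-p_i$ whose coefficients are the remainders $R_{\gamma+\beta}(x_0;p_i)$, and the scalings $|x-p_i|\asymp|x-x_0|\asymp\ell(Q_i)$ and $|p_i-x_0|\lesssim|x-x_0|$ combined with (\ref{ape1}) show each summand is $o(|x-x_0|^{k-|\alpha|})$.

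The principal technical obstacle is precisely this combinatorial rearrangement, re-expressing the Leibniz expansion of $\partial^\alpha g$ purely in terms of the compatibility remainders $R_\delta(x_0;p_i)$ rather than individual $f_\beta$ values. Once it is in place, summing the finite-overlap estimates and using $T_\alpha(x_0;x)\to f_\alpha(x_0)$ as $x\to x_0$ gives $\partial^\alpha g(x)\to f_\alpha(x_0)$ for every $|\alpha|\le k$, hence continuity of each $\partial^\alpha g$ across $E$ and the identification $\partial^\alpha g = f_\alpha$ on $E$.
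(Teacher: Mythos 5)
The paper does not prove Theorem \ref{TapW}: it states it as the classical Whitney Extension Theorem and invokes it without proof in the proof of Theorem \ref{Th_ap} (with references available, e.g., in \cite{St} Chapter VI). Your sketch is the standard Whitney argument, and the algebraic core is handled correctly: the identity $\partial^\gamma T_0(p;\cdot)=T_\gamma(p;\cdot)$, the use of $\sum_i\partial^{\alpha-\gamma}\varphi_i\equiv 0$ for $\gamma\neq\alpha$ to subtract off $T_\alpha(x_0;\cdot)$, and---most importantly---the exact Taylor expansion of the polynomial $T_\gamma(x_0;\cdot)$ at $p_i$ to rewrite $T_\gamma(p_i;x)-T_\gamma(x_0;x)=\sum_{|\beta|\le k-|\gamma|}\frac{1}{\beta!}R_{\gamma+\beta}(x_0;p_i)(x-p_i)^\beta$, which together with the Whitney scalings $\ell(Q_i)\asymp|x-p_i|\asymp|x-x_0|$, $|p_i-x_0|\lesssim|x-x_0|$ and hypothesis (\ref{ape1}) gives $\partial^\alpha g(x)-T_\alpha(x_0;x)=o(|x-x_0|^{k-|\alpha|})$ uniformly as $x\to x_0$ through $V$.

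The only place where the write-up is a bit quick is the closing sentence. Knowing that $\partial^\alpha g(x)\to f_\alpha(x_0)$ as $x\to x_0$ through $V$ does not by itself establish that $g$ is $|\alpha|$-times (Fr\'echet) differentiable at $x_0\in E$, which is needed before one can speak of continuity of $\partial^\alpha g$ across $E$. The correct completion uses precisely the stronger estimate you already proved: setting $\tilde f_\alpha=f_\alpha$ on $E$ and $\tilde f_\alpha=\partial^\alpha g$ on $V$, one has $\tilde f_\alpha(x)-T_\alpha(x_0;x)=o(|x-x_0|^{k-|\alpha|})$ uniformly for $x_0\in E$ and all $x$ (the case $x\in E$ is exactly (\ref{ape1})). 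For $|\alpha|<k$ this is a first-order Taylor statement, so $\tilde f_\alpha$ is differentiable at each $x_0\in E$ with $\partial_j\tilde f_\alpha(x_0)=\tilde f_{\alpha+e_j}(x_0)$; a straightforward induction on $|\alpha|$ then identifies $\tilde f_\alpha=\partial^\alpha g$ on all of $\R^n$, and the case $|\alpha|=k$ gives continuity of the top-order derivatives. With this standard step supplied, the proof is correct.
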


\begin{proof}[Proof of Theorem~\ref{Th_ap}]
Let the assumptions of Theorem~\ref{Th_ap} be fulfilled. For the
case $k=l$ the assertion of the Theorem is well-known (see, e.g.,
\cite{Liu}, \cite{Boj}, or \cite{Ziem}).

Now fix $k<l$. Then the gradients $\nabla^m f (x)$, $m\le k$, are
well-defined for all $x\in\R^n\setminus A_{k}$, where
$\H^{n-l+k}(A_{k})=0$ (see \cite{Dor}). For a multi-index $\alpha$
with $|\alpha|\le k$ denote by $T_\alpha(f,x;y)$ the Taylor
polynomial of order at most $k-|\alpha|$ for the
function~$\partial^\alpha f$ with the center at~$x$:
$$
T_\alpha(f,x;y)=\sum\limits_{|\beta|\le
k-|\alpha|}\frac1{\beta!}\partial^{\alpha+\beta}f(x)\cdot(y-
x)^\beta.
$$
By virtue of the Whitney Extension Theorem~\ref{TapW}, we finish
the proof of Theorem~\ref{Th_ap} by checking that for each
multi-index $\alpha$ with $|\alpha|\le k$ the corresponding Taylor
remainder term satisfies the estimate $\partial^\alpha
f(y)-T_\alpha(f,x;y)=o(|x-y|^{k-|\alpha|})$ uniformly for
$x,y\in\R^n\setminus U$, where $\H^{n-l+k}_\infty(U)$ is small.

Take a sequence $f_i\in C^\infty_0(\R^n)$ such that
$$
\|\nabla^lf_i-\nabla^lf\|_{\LL^1}< 4^{-i}.
$$
Denote $\tilde f_i=f-f_i$. Put
$$
B_i=\{x\in\R^n: (\M \nabla^k\tilde f_i)(x)>2^{i}\},
$$
$$
G_i=A_{k}\cup\left(\bigcup\limits_{j=i}^\infty B_j\right).
$$
Then by Theorem~\ref{lb7} we have
$$
\H^{n-l+k}_\infty(B_i)\le c\,2^{-i},
$$
and consequently,
$$
\H^{n-l+k}_\infty(G_i)< C\,2^{-i}.
$$
By construction,
\begin{equation}\label{l2.5}
|\nabla^k \tilde f_j(x)|\le2^{-j}
\end{equation}
for all $x \in \R^n \setminus G_i$ and all $j \geq i$. For a
multi-index $\alpha$ with $|\alpha|\le k-1$ denote by
$T_{\alpha,k-1}(f,x;y)$ the Taylor polynomial of order
$k-1-|\alpha|$ for the function~$\partial^\alpha f$ with the
center at~$x$:
$$
T_{\alpha,k-1}(f,x;y)=\sum\limits_{|\beta|\le
k-1-|\alpha|}\frac1{\beta!}\partial^{\alpha+\beta}f(x)\cdot(y-
x)^\beta.
$$
In our notation,
$$
T_{\alpha}(f,x;y)=T_{\alpha,k-1}(f,x;y)+\sum\limits_{|\beta|=k-
|\alpha|
}\frac1{\beta!}\partial^{\alpha+\beta}f(x)\cdot(y-x)^\beta.
$$
We start by estimating the remainder term $\partial^\alpha\tilde
f_j(y)-T_{\alpha,k-1}(\tilde f_j,x;y)$ for a multi-index~$\alpha$
with $|\alpha|\le k-1$. Fix $x,y\in\R^n\setminus G_i$, \ $j\ge i$,
and an $n$-dimensional interval~$I$ such that $x,y\in I$,
$|x-y|\sim \ell(I)$.  By construction and Lemma~\ref{dorl2},
$$
\bigl|\partial^\alpha\tilde f_j(y)-
\partial^\alpha P_{I,k-1}[\tilde f_j](y)\bigr|\le C\ell(I)^{k-
|\alpha|}(\M
\nabla^k \tilde f_j)(y)\le C|x-y|^{k-|\alpha|}2^{-j}.
$$
For the same reasons we find for any multi-index $\beta$ with
$|\beta|\le k-1-|\alpha|$ that
$$
\bigl|\partial^{\alpha+\beta}\tilde f_j(x)-
\partial^{\alpha+\beta} P_{I,k-1}[\tilde f_j](x)\bigr|\le
C\ell(I)^{k-|\alpha|-|\beta|}(\M
\nabla^k \tilde f_j)(x)\le C|x-y|^{k-|\alpha|-|\beta|}2^{-j}.
$$
Consequently,
\begin{eqnarray}
|\partial^\alpha\tilde{f}_j(y)-T_{\alpha,k-1}(\tilde{f}_j,x;y)|
&\leq& \bigl|\partial^\alpha\tilde f_j(y)-\partial^\alpha
P_{I,k-1}[\tilde f_j](y)\bigr| \\ \nonumber &&
+\bigl|\partial^\alpha P_{I,k-1}[\tilde
f_j](y)-T_{\alpha,k-1}(\tilde f_j,x;y)\bigr|\\ \nonumber &\leq&
C|x-y|^{k-|\alpha|}2^{-j}\\ \nonumber && + \sum\limits_{|\beta|\le
k-1-|\alpha|}\frac1{\beta!}\bigl|\bigl(\partial^{\alpha+\beta}
\tilde{f}_j(x)-\partial^{\alpha+\beta}
P_{I,k-1}[\tilde{f}_j](x)\bigr)\cdot(y-x)^\beta\bigr|\\ \nonumber
&\leq& C_1|x-y|^{k-|\alpha|}2^{-j}.\label{ap3}
\end{eqnarray}
Finally from the last estimate and (\ref{l2.5}) we have
\begin{eqnarray*}
|\partial^\alpha f(y)-T_\alpha(f,x;y)| &\leq& | \partial^\alpha
\tilde f_j(y)-T_\alpha(\tilde f_j,x;y)| + |
\partial^\alpha f_j(y)-T_\alpha( f_j,x;y)|\\
&\leq& | \partial^\alpha \tilde f_j(y)-T_{\alpha,k-1}(\tilde
f_j,x;y)|+ |\nabla^k\tilde f_j(x)|\cdot|x-y|^{k-|\alpha|}\\
&& +\omega_{f_j}(|x-y|)\cdot|x-y|^{k-|\alpha|}\\
&\le& C'|x-y|^{k-|\alpha|}2^{-j}+\omega_{f_j}(|x-y|)\cdot|x-y|^{k-
|\alpha|}\\
&=&
\bigl(C'2^{-j}+\omega_{f_j}(|x-y|)\bigr)\cdot|x-y|^{k-|\alpha|},
\end{eqnarray*}
where $\omega_{f_j}(r)\to 0$ as $r\to0$ (the latter holds since
$f_j\in C^\infty_0(\R^n)$\,). We emphasize that the last
inequality is valid for all $j\ge i$ and $x,y\in \R^n\setminus
G_i$. Take an open set $U_i\supset G_i$ such that
\begin{equation*}
\H^{n-l+k}_\infty(U_i)< C\,2^{-i}.
\end{equation*}
Put $E_i=\R^n\setminus U_i$. Then by construction
\begin{equation}\label{ap5}
|\partial^\alpha f(y)-T_\alpha(f,x;y)|\le
\bigl(C'2^{-j}+\omega_{f_j}(|x-y|)\bigr)\cdot|x-y|^{k-|\alpha|}
\end{equation}
for all $j\ge i$,  $|\alpha|\le k$,  and $x,y\in E_i$. Then the
assumptions of the Whitney Extension Theorem~\ref{TapW} are
fulfilled, and hence the proof of Theorem~\ref{Th_ap} is complete.

\end{proof}

\begin{rem}\label{remapp}
Using the extension formula and the methods from the proof of
Theorem~\ref{bv''}  (see Section~6 below; this approach was
originally introduced in \cite{Boj}\,), one can prove that for
$k<l$ the function~$g$ from the assertion of Theorem~\ref{Th_ap}
can be constructed such that in addition the estimate
$\|f-g\|_{\WW^{k+1,1}}<\varepsilon$ holds.
\end{rem}

\section{Morse--Sard theorem in $\WW^{n,1}(\R^n)$}

Recall that if $v\in\WW^{n,1}(\R^n)$ and $k=1,\dots,n$, then
$\nabla^k v(x)$ is well-defined for $\mathcal H^{k}$-almost all
$x\in\R^n$ (see \cite{Dor}). In particular, $v$ is differentiable
(in the classical Fr\'{e}chet sense) and the classical derivative
coincides with $\nabla v(x)=\lim\limits_{r\to 0}
\dashint_{B(x,r)}{\nabla v}(z)\,\dd z$ at all points
$x\in\R^n\setminus A_{v}$, where $\H^1(A_v)=0$. Consequently, in
view of Theorem~\ref{th3.3}, $\H^1(v(A_v))=0$.

Denote $Z_v=\{x\in \R^n\setminus A_v:\nabla v(x)=0\}$. The main
result of the section is as follows:

\begin{ttt}
\label{MS} {\sl  If $v\in \WW^{n,1}(\R^n)$, then
$\H^1(v(Z_v))=0$.}
\end{ttt}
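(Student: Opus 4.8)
The plan is to reduce the statement, via a Whitney-type decomposition of $\R^n$ into dyadic cubes, to a local estimate on each cube of the image $v(Z_v \cap I)$, and then to sum these estimates using the regularity/summability machinery developed in Section~2 (in particular Corollary~\ref{cor3.4} and Lemma~\ref{Thh3.3} with $k=n-1$). The point is that on a single interval $I$ the Taylor polynomial $P_{I,n-1}[v]$ captures almost all of $v$, in the sense that the remainder $v_{I,n-1}=v-P_{I,n-1}[v]$ has small $\|\nabla^n v_{I,n-1}\|_{\LL^1(\R^n)}$-norm once $I$ is small, by Lemma~\ref{lb3}; meanwhile the polynomial part is handled by Yomdin's entropy estimate (Theorem~\ref{lb8}). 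So morally $v\approx P$ on $I$, and the near-critical values of a polynomial of degree $\le n-1$ on a ball of radius $r$ with $|\nabla P|\le\ve$ lie in $O(1)$ balls of radius $\ve r$.

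First I would state and prove the key local lemma --- this should be the ``Lemma~\ref{lb11}'' advertised in the introduction. It should say roughly: for every $\ve>0$ there is $\delta>0$ such that for any family $\{I_\alpha\}$ of disjoint dyadic $n$-cubes with $\sum_\alpha \ell(I_\alpha)^n<\delta$, one has $\sum_\alpha \H^1_\infty\bigl(v(Z_v^\ast \cap I_\alpha)\bigr)<\ve$, where $Z_v^\ast$ is the set of near-critical points (points where $v$ is differentiable with $|\nabla v|$ small, or more precisely where the maximal function $\M\nabla v$ is small relative to the cube). To prove this local lemma on a fixed $I=I_\alpha$: decompose $v=P_{I,n-1}[v]+v_{I,n-1}$; on $Z_v\cap I$ the gradient of $v$ vanishes, so $|\nabla P_{I,n-1}[v]|$ is controlled by $|\nabla v_{I,n-1}|$, which (via Corollary~\ref{lb10} applied to $\nabla^{n-1}v_{I,n-1}$ componentwise, or directly via Lemma~\ref{dorl2}) is pointwise bounded by $C\ell(I)\,(\M\nabla^n v_{I,n-1})(x)$. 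Split $Z_v\cap I$ according to the dyadic size of $(\M\nabla^n v_{I,n-1})$: on the part where it is $\le \ve_j$ one uses Yomdin (the polynomial image is in $\lesssim 1$ balls of radius $\ve_j\,\ell(I)$) and Corollary~\ref{lb10} again for the $v_{I,n-1}$-contribution, to get $\H^1_\infty(v(\cdot))\lesssim \ve_j\ell(I)$; summing the dyadic pieces in $j$ produces a factor $\H^1_\infty(\{x\in I:\M\nabla^n v_{I,n-1}(x)>\lambda\})$ integrated against $\dd\lambda$, which by Adams' inequality (Theorem~\ref{lb7} with $k=n-1$, so $n-k=1$) is $\lesssim \|\nabla^n v_{I,n-1}\|_{\LL^1}$. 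Thus $\H^1_\infty(v(Z_v\cap I_\alpha))\lesssim \|\nabla^n v_{I_\alpha,n-1}\|_{\LL^1(\R^n)}$, and Corollary~\ref{cor3.4} gives $\sum_\alpha \|\nabla^n v_{I_\alpha,n-1}\|_{\LL^1(\R^n)}<\ve$ for $\delta$ small.

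Next I would globalize. Cover $\R^n$ by countably many disjoint dyadic cubes of a fixed small sidelength $\rho$, keep only those finitely many (essentially) on which $\|\nabla^n v\|_{\LL^1}$ is not negligible --- more precisely, since $\nabla^n v\in\LL^1$, all but finitely many cubes of size $\rho$ satisfy the smallness hypothesis of the local lemma, and on the rest one subdivides further: on a cube $Q$ with $\ell(Q)^n$ not yet small compared to the target, partition $Q$ into $2^{jn}$ equal dyadic subcubes with $j$ large enough that the sum of their $\ell^n$ is still just $\ell(Q)^n$ but each is tiny. Iterating and using absolute continuity of $\lambda\mapsto\int$ one arranges a countable disjoint dyadic family $\{I_\alpha\}$ covering $\R^n$ (up to a $\H^1$-null set, namely $A_v$, whose image is null by Theorem~\ref{th3.3}) with $\sum_\alpha\ell(I_\alpha)^n$ as small as we like over any finite sub-collection contributing non-trivially; a diagonal/exhaustion argument then yields $\H^1_\infty(v(Z_v))\le \H^1_\infty(v(A_v))+\sum_\alpha\H^1_\infty(v(Z_v\cap I_\alpha))<\ve$. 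Since $\ve>0$ is arbitrary, $\H^1_\infty(v(Z_v))=0$, hence $\H^1(v(Z_v))=0$.

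The main obstacle I anticipate is the bookkeeping in the globalization step: one wants a single disjoint dyadic family covering $\R^n$ whose $\ell^n$-sum over the ``relevant'' cubes is below the $\delta$ of the local lemma, while still covering all of $Z_v$ --- the difficulty is that $\sum\ell(I_\alpha)^n$ over a family covering all of $\R^n$ is infinite, so the reduction must be done on an increasing sequence of bounded regions together with the observation that outside a large ball, and outside finitely many bad cubes, the contribution $\|\nabla^n v_{I_\alpha,n-1}\|_{\LL^1}$ is already small by integrability of $\nabla^n v$ (not by smallness of $\ell(I_\alpha)$ alone). Managing this --- i.e.\ isolating the finitely many cubes where the polynomial part is genuinely large and handling them by further subdivision, versus the cofinitely many where the $\LL^1$-tail is small --- is the delicate part; everything else is an assembly of Theorems~\ref{lb7}, \ref{lb8}, Lemma~\ref{lb3} and Corollary~\ref{cor3.4}.
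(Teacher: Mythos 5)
Your key local lemma is essentially the paper's Lemma~\ref{lb11}, and you identify the right ingredients for it: decompose $v=P_{I,n-1}[v]+v_{I,n-1}$ on a cube, apply Yomdin's entropy estimate (Theorem~\ref{lb8}) to the polynomial and Corollary~\ref{lb10} to the remainder, and sum over level sets of $\M\nabla v_{I,n-1}$ using Adams' inequality (Theorem~\ref{lb7}) to close with $\|\nabla^n v_{I,n-1}\|_{\LL^1}$. That part of the plan is sound and matches the paper.

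The globalization step, however, has a genuine gap, and it is exactly the obstacle you name. Your plan is to cover $Z_v$ (up to $A_v$) by disjoint dyadic cubes with $\sum_\alpha\ell(I_\alpha)^n$ below the $\delta$ of Corollary~\ref{cor3.4}, and thus force $\sum_\alpha\|\nabla^n v_{I_\alpha,n-1}\|_{\LL^1(\R^n)}<\varepsilon$. But $Z_v$ can have positive Lebesgue measure (e.g.\ $v$ constant on a ball), so no such cover exists. Nor does subdivision help: for $k=n-1$ one has $\|\nabla^n v_{I,n-1}\|_{\LL^1(\R^n)}\lesssim R(I,n-1)\lesssim\|\nabla^n v\|_{\LL^1(I)}$, so replacing $I$ by $2^{jn}$ subcubes leaves the sum of these quantities unchanged, and the best you can extract from summing Lemma~\ref{lb11} over any disjoint cover of $Z_v$ is the finite but nonzero bound $\H^1(v(Z_v))\lesssim\|\nabla^n v\|_{\LL^1(\R^n)}$.

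The missing ingredient is the approximation theorem. The paper first invokes Theorem~\ref{Th_ap} with $k=l=n$ to produce, for each $\varepsilon>0$, a $C^n$ function $g$ agreeing with $v$ together with all derivatives up to order $n$ off an open set $U$ with $\H^n_\infty(U)<\varepsilon$. The \emph{classical} Morse--Sard theorem applied to $g$ then kills the image of $Z_v\setminus U$; taking a sequence $\varepsilon_k\to0$ yields a set $Z_{0,v}$ with $\L^n(Z_{0,v})=0$ and $\H^1(v(Z_v\setminus Z_{0,v}))=0$ (Corollary~\ref{Th_ap2}). Only \emph{then} does one bring in the local machinery: since $\H^n_\infty(Z_{0,v})=0$, Lemma~\ref{lb11} together with Corollary~\ref{cor3.4} (packaged as Corollary~\ref{cor00}) gives $\H^1(v(Z_v\cap Z_{0,v}))=0$, and the two pieces combine to finish. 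In short, the role of Lemma~\ref{lb11}/Corollary~\ref{cor3.4} is not to control $v(Z_v)$ directly but only $v(Z_v\cap E)$ for $E$ of vanishing $\H^n_\infty$; the reduction to such an $E$ via approximation plus classical Morse--Sard is the step your plan omits and cannot be avoided by the covering/subdivision bookkeeping you sketch.
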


\noindent For the remainder of the section we fix a function $v\in
\WW^{n,1}(\R^n)$.

\noindent The key point of the proof is contained in the following
lemma.

\begin{lem}\label{lb11}{\sl
For any $n$-dimensional dyadic interval $I\subset\R^n$ the
estimate
\begin{equation}
\label{6} \H^1(v(Z_v\cap I))\le C \|\nabla^nv\|_{L^1(I)}
\end{equation}
holds, where $C$ depends on $n$ only.}\end{lem}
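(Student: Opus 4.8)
The plan is to reduce everything to the auxiliary function $w=v_{I,n-1}$, which by Lemma~\ref{lb3} differs from $v$ on $I$ only by the polynomial $P=P_{I,n-1}[v]$ of degree $\le n-1$, satisfies $w\in\WW^{n,1}(\R^n)$, and obeys $\|\nabla^n w\|_{\LL^1(\R^n)}\le C_0R(I,n-1)$. Since $v=w+P$ on $I$, for $x\in Z_v\cap I$ we have $\nabla w(x)=-\nabla P(x)$, so $|\nabla P(x)|=|\nabla w(x)|$; and $v(x)=w(x)+P(x)$. The idea is to split the image $v(Z_v\cap I)$ into a ``polynomial part'' controlled by Yomdin's entropy estimate (Theorem~\ref{lb8}) and a ``Sobolev perturbation part'' controlled by the maximal-function machinery (Corollary~\ref{lb10} and Theorem~\ref{lb7}). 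One subtlety: the right-hand side of \eqref{6} is $\|\nabla^n v_{I,n-1}\|_{\LL^1(\R^n)}$, not $R(I,n-1)$ itself; but by \eqref{1'} the former dominates (a constant times) $R(I,n-1)$ up to the relation between $R$ and $\|\nabla^n w\|_{\LL^1}$ — actually we only need the inequality $\|\nabla^n w\|_{\LL^1}\le C_0R(I,n-1)$ in one direction together with a Poincar\'e-type bound in the other, so I will work directly with $w$ and express the final bound in terms of $\|\nabla^n w\|_{\LL^1(\R^n)}$ plus lower-order terms that are themselves $\lesssim\|\nabla^n w\|_{\LL^1}$ via \eqref{1}.

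First I would set $\ve>0$ and perform a stopping-time / dyadic decomposition of $I$: using Theorem~\ref{lb7} applied to $\nabla^{n-1}w\in\WW^{1,1}$ (so $k=1$, giving control of $\H^1_\infty$), choose a threshold $\lambda$ and let $\{I_j\}$ be the maximal dyadic subcubes of $I$ on which the average of $|\nabla^{n-1}w|$ (equivalently, on which $\M(\nabla^{n-1}w)$, or better, on which the relevant oscillation quantity) exceeds $\lambda$. On the good set $I\setminus\bigcup I_j$ the maximal function $\M\nabla^k(\cdot)$ of the appropriate derivative is $\le\lambda$, so there $w$ is close to its Taylor polynomial of $v$ of order $n-1$, i.e.\ $w$ is small in $\CC^1$-oscillation scaled appropriately (Lemma~\ref{dorl2} with $m=0,1$). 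The cubes $I_j$ carry, by Theorem~\ref{lb7} and maximality, total $\H^1_\infty$-content $\le C\lambda^{-1}\|\nabla^n w\|_{\LL^1(\R^n)}$ (summing the weak-type bound). On each $I_j$ I iterate or use a crude bound: $\diam v(Z_v\cap I_j)\lesssim\ell(I_j)\cdot(\text{sup of }|\nabla v|\text{ there})$, and $\sum_j\ell(I_j)\,\lesssim$ content times sup, handled again by the maximal function.

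The heart of the argument is the estimate on the good part. Cover $I\setminus\bigcup I_j$ by cubes $Q$ at the stopping scale; on each $Q$, for $x\in Z_v\cap Q$, $|\nabla P(x)|=|\nabla w(x)|\le|\nabla w(x)-\nabla P_{Q,n-1}[w](x)|+|\nabla P_{Q,n-1}[w](x)|$, where the first term is $\lesssim\ell(Q)\M(\nabla^n w)(x)$ by Lemma~\ref{dorl2} and is small on the good set, while $P_{Q,n-1}[w]+P$ restricted to $Q$ is a polynomial of degree $\le n-1$ whose gradient at $x$ is $O(\varepsilon\ell(Q)/\ell(Q))$-small, so Yomdin's Theorem~\ref{lb8} gives that $(P_{Q,n-1}[w]+P)\big(\{x\in Q:|\nabla(\cdot)|\le\varepsilon\}\big)$ is covered by $C_*$ balls of radius $\varepsilon\ell(Q)$. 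Combining with Corollary~\ref{lb10} applied to the difference $w-P_{Q,n-1}[w]$ (controlling how far $v$ on $Z_v\cap Q$ deviates from the polynomial part) yields $\H^1_\infty\big(v(Z_v\cap Q)\big)\le C\varepsilon\ell(Q)+C\ell(Q)\fint_Q|\nabla^n w|$-type terms; summing over the $Q$'s at the stopping scale and using $(k+1)$-regularity-like packing ($\sum\ell(Q)^{?}$ bounds) plus $\sum_Q\ell(Q)\fint_Q|\nabla^n w|\lesssim$ a Choquet integral $\lesssim\|\nabla^n w\|_{\LL^1}$ by Theorem~\ref{lb7}, gives $\H^1(v(Z_v\cap I))\le C\|\nabla^n w\|_{\LL^1(\R^n)}+C\varepsilon$. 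Letting $\varepsilon\searrow0$ finishes the proof. The main obstacle I anticipate is bookkeeping the scales so that both the Yomdin contribution (which is purely local and scale-invariant, producing $\sum C_*\varepsilon\ell(Q)$) and the maximal-function/Hausdorff-content contributions assemble to a single clean bound by $\|\nabla^n v_{I,n-1}\|_{\LL^1(\R^n)}$; in particular one must ensure the stopping cubes $I_j$ are handled without losing a logarithm, which is exactly where the sharp endpoint estimate of Theorem~\ref{lb7} (Adams' Choquet-integral inequality for maximal functions against Hausdorff content) is essential rather than a naive Chebyshev bound.
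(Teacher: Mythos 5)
Your plan correctly isolates the three key ingredients — writing $v=P_I+v_I$ so that on $Z_v\cap I$ one has $\nabla v_I(x)=-\nabla P_I(x)$, applying Yomdin's entropy bound (Theorem~\ref{lb8}) to the polynomial part, applying Corollary~\ref{lb10} to the $\WW^{1,1}$ correction $v_I$, and controlling everything via Adams' Choquet inequality (Theorem~\ref{lb7}) — but the decomposition you set up is not the one that makes the bookkeeping close, and it leaves a genuine gap.

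The paper does \emph{not} perform a stopping-time/Calder\'on--Zygmund decomposition of $I$ into dyadic subcubes at a fixed threshold $\lambda$. Instead it decomposes by the \emph{value} of the first-order maximal function: it sets $E_j=\{x:\M\nabla v_I(x)\in(2^{j-1},2^j]\}$, covers each $E_j$ by balls $B_{ij}$ with $\sum_i r_{ij}\lesssim\H^1_\infty(E_j)=:\delta_j$, and notes that on each $B_{ij}$ both $|\nabla P_I|$ and $\M\nabla v_I$ are $\leq 2^j$ on $Z_v\cap I\cap E_j$. Then Yomdin applied to $P_I$ and Corollary~\ref{lb10} applied to $v_I$ (both with $\varepsilon=2^j$, $B=B_{ij}$) give a cover of $v(Z_v\cap I\cap E_j\cap B_{ij})$ by $C_*$ intervals of length $\sim 2^j r_{ij}$, and the total is $\sum_{j}2^j\delta_j\lesssim\|\nabla^n v_I\|_{\LL^1}$ directly by Theorem~\ref{lb7}. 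The residual set where $\M\nabla v_I=\infty$ has $\H^1_\infty$ zero and is disposed of by Theorem~\ref{th3.3}. There is no $\lambda$ to fix, no $\varepsilon\searrow0$ limit, and no subcube Taylor polynomials $P_{Q,n-1}[w]$: everything runs at first order on the single pair $(P_I,v_I)$.

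The concrete gap in your plan is the treatment of the bad cubes $I_j$: "iterate or use a crude bound" is not enough. A crude bound $\diam v(Z_v\cap I_j)\lesssim\ell(I_j)\sup_{I_j}|\nabla v|$ does not close (the sup is uncontrolled), and iterating a stopping-time decomposition at geometrically increasing thresholds is exactly tantamount to performing the level-set decomposition of $\M\nabla v_I$ — but you would then also need the Adams-type summability $\sum_j 2^j\H^1_\infty(\{\M\nabla v_I>2^j\})\lesssim\|\nabla^n v_I\|_{\LL^1}$ at every stage, which is cleaner to use once and for all as the paper does. A secondary inconsistency is that you invoke $\M\nabla^{n-1}w$ for the stopping rule but Lemma~\ref{dorl2} (with $k=n$, $m=1$) produces $\ell(Q)^{n-1}\M\nabla^n w$, not $\ell(Q)\M\nabla^{n-1}w$; the paper sidesteps this by never going to higher order on the good set, using only $\M\nabla v_I$, where Adams with $k=n-1$ gives precisely $\H^1_\infty$-control in terms of $\|\nabla^n v_I\|_{\LL^1}$.
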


\begin{proof}
Note that by formula (\ref{1'}) it is sufficient to prove the
estimate
\begin{equation}
\label{6corr} \H^1(v(Z_v\cap I))\le C
\|\nabla^nv_{I,n-1}\|_{L^1(\R^n)},
\end{equation}
where the function $v_{I,n-1}$ was defined in Lemma~\ref{lb3}.

Fix an $n$-dimensional dyadic interval $I\subset\R^n$. To simplify
the notation,  we will write $v_I$ and $P_I$ instead of
$v_{I,n-1}$ and $P_{I,n-1}[v]$ respectively. In particular,
$v_{I}(x)=v(x)-P_{I}(x)$ for all $x\in I$. Denote
$$
\sigma=\|\nabla^nv_{I}\|_{L^1(\R^n)},\quad E_j=\{x\in\R^n: (\M
\nabla v_{I})(x)\in(2^{j-1},2^{j}]\},\quad j\in \mathbb Z.
$$
Denote also $\delta_j=\H^1_\infty(E_j).$ Then by
Theorem~\ref{lb7},
$$
\sum\limits_{j=-\infty}^\infty \delta_j2^j\le C_1\sigma,
$$
where $C_1$ depends on $n$ only. By construction, for each
$j\in\mathbb Z$ there exists a family of balls $B_{ij}\subset\R^n$
of radii $r_{ij}$ such that
$$
E_j\subset \bigcup\limits_{i=1}^\infty B_{ij} \mbox{\ \ \ and\ \ \
}\sum\limits_{i=1}^\infty r_{ij}\le 3\delta_j.
$$
Denote
$$
Z_{ij}=Z_v\cap I\cap E_j\cap B_{ij} \quad \mbox{ and } \quad
Z_\infty= Z_v\cap
I\setminus\biggl(\bigcup\limits_{i,j}Z_{ij}\biggr).
$$
By construction $Z_\infty\subset\{x\in\R^n:(\M \nabla
v_{I})(x)=\infty\}$, so by Theorem~\ref{lb7}, \ $\H^1(Z_\infty)=0$
and hence by Theorem~\ref{th3.3}, $\H^1(v(Z_\infty))=0$. Thus it
is sufficient to estimate $\H^1(v(Z_{ij}))$.

Since $\nabla P_{I}(x)=-\nabla v_{I}(x)$ at each point $x\in
Z_v\cap I$, we have by construction for all $i$, $j$:
$$
Z_{ij}\subset \{x\in B_{ij}: |\nabla P_{I}(x)|=|\nabla
v_{I}(x)|\le(\M \nabla v_{I})(x)\le2^j\}.
$$
Applying Theorem~\ref{lb8} and Corollary~\ref{lb10} to functions
$P_{I}$, \ $v_{I}$, respectively, with $B=B_{ij}$ and $\varepsilon
=2^j$, we find a finite family of balls  $T_k\subset\R$ each of
radius $(1+C_{**})2^jr_{ij}$, $k=1,\dots,C_*$, such that
$$
\bigcup\limits_{k=1}^{C_*}T_k\supset v(Z_{ij}).
$$
Therefore
$$
\H^1(v(Z_{ij}))\le 2C_*(1+C_{**})2^{j}r_{ij},
$$
and consequently,
$$
\H^1(v(Z_{v}\cap I))\le
\sum\limits_{j=-\infty}^{\infty}\sum\limits_{i=1}^\infty
2C_*(1+C_{**})2^jr_{ij}\le
6C_*(1+C_{**})\sum\limits_{j=-\infty}^{\infty}2^j\delta_j\le
C'\sigma.
$$
The last estimate finishes the proof of the Lemma.
\end{proof}

From the last result and the absolute continuity of the Lebesgue
integral we infer

\begin{cor}
\label{cor00} {\sl  For any $\varepsilon>0$ there exists
$\delta>0$ such that for any set $E\subset\R^n$ if
$\H^n_\infty(E)\le\delta$, then $\H^1(v(Z_v\cap
E))\le\varepsilon$. In particular, $\H^1(v(Z_v\cap E))=0$ for any
$E\subset\R^n$ with $\H^n_\infty(E)=0$.}
\end{cor}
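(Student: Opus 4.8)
The plan is to derive the corollary by feeding the dyadic estimate of Lemma~\ref{lb11} into the summable control furnished by Corollary~\ref{cor3.4}, after reducing an arbitrary set $E$ with small $\H^n_\infty$ to a pairwise disjoint family of dyadic cubes with comparably small total $n$-content.

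First I would fix $\varepsilon>0$, let $C$ be the constant appearing in Lemma~\ref{lb11}, and apply Corollary~\ref{cor3.4} with $\varepsilon/C$ in place of $\varepsilon$. This produces $\delta_0>0$ such that every family $\{I_\alpha\}$ of pairwise disjoint $n$-dimensional intervals with $\sum_\alpha\ell(I_\alpha)^n<\delta_0$ satisfies $\sum_\alpha\|\nabla^nv_{I_\alpha,n-1}\|_{\LL^1(\R^n)}<\varepsilon/C$. I then claim that $\delta=4^{-n}\delta_0$ does the job. Indeed, suppose $\H^n_\infty(E)\le\delta$ and choose a cover $E\subset\bigcup_i F_i$ with $\sum_i(\diam F_i)^n<\delta$. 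Each $F_i$ lies in an $n$-dimensional interval of sidelength $\diam F_i$, which by Lemma~\ref{lemD} is covered by $2^n$ dyadic intervals of sidelength at most $2\diam F_i$. Collecting all these dyadic intervals over all $i$ gives a countable family covering $E$ whose sidelengths obey $\sum \ell(Q)^n\le 2^n\sum_i(2\diam F_i)^n=4^n\sum_i(\diam F_i)^n<4^n\delta=\delta_0$. Since two dyadic intervals are either disjoint or nested, I pass to the subfamily $\{I_\alpha\}$ of maximal elements: these are pairwise disjoint, still cover $E$ (hence $Z_v\cap E$), and their total $n$-content does not exceed that of the whole family, so $\sum_\alpha\ell(I_\alpha)^n<\delta_0$.

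With this family in hand the conclusion is immediate: by countable subadditivity of $\H^1$, then Lemma~\ref{lb11} applied to each dyadic interval $I_\alpha$, and finally the choice of $\delta_0$,
$$
\H^1\bigl(v(Z_v\cap E)\bigr)\le\sum_\alpha\H^1\bigl(v(Z_v\cap I_\alpha)\bigr)\le C\sum_\alpha\|\nabla^nv_{I_\alpha,n-1}\|_{\LL^1(\R^n)}<\varepsilon.
$$
The ``in particular'' assertion follows by observing that if $\H^n_\infty(E)=0$ then $\H^n_\infty(E)\le\delta$ for the $\delta=\delta(\varepsilon)$ associated with every $\varepsilon>0$, whence $\H^1(v(Z_v\cap E))\le\varepsilon$ for all $\varepsilon>0$, i.e.\ $\H^1(v(Z_v\cap E))=0$.

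The only genuinely delicate point is the geometric reduction in the middle step: passing from an economical $\H^n_\infty$-cover of $E$ by arbitrary sets to a \emph{pairwise disjoint} family of \emph{dyadic} cubes (dyadic because Lemma~\ref{lb11} is stated for dyadic intervals, disjoint because Corollary~\ref{cor3.4} requires disjointness) while keeping the total $n$-content comparable. This is handled by Lemma~\ref{lemD} together with the nesting property of the dyadic grid, and one must check that replacing the family by its maximal elements both preserves the covering of $E$ and does not increase $\sum\ell(\cdot)^n$; everything else is bookkeeping with the constants.
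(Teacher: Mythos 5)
Your proof is correct and takes essentially the route the paper intends when it says the corollary is ``inferred'' from Lemma~\ref{lb11} and Corollary~\ref{cor3.4}: reduce $E$ to a disjoint family of dyadic cubes with controlled total $n$-content, apply the dyadic estimate of Lemma~\ref{lb11} cube by cube, and sum using Corollary~\ref{cor3.4}. The only place to be slightly more careful is the phrase ``choose a cover with $\sum_i(\diam F_i)^n<\delta$'': when $\H^n_\infty(E)=\delta$ exactly, one only gets covers with sum strictly below $\delta+\eta$, but this is repaired by simply taking $\delta$ a bit smaller than $4^{-n}\delta_0$ (or by covering with sum $<2\delta$), so it does not affect the argument. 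It is also worth observing, as you implicitly do, that maximal dyadic cubes exist in your family because the finiteness of $\sum\ell(Q)^n$ bounds the sidelengths from above, so no infinite increasing chain can occur.
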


Because of the classical Morse-Sard Theorem for $g\in C^n(\R^n)$,
Theorem~\ref{Th_ap} (applied to the case $k=n$ ) implies

\begin{cor}
\label{Th_ap2} {\sl  There exists a set $Z_{0,v}$ of
$n$-dimensional Lebesgue measure zero such that
$\H^1(v(Z_v\setminus Z_{0,v}))=0$. In particular,
$\H^1(v(Z_v))=\H^1(v(Z_{0,v}))$.}
\end{cor}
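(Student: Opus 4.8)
The plan is to apply Theorem~\ref{Th_ap} in the borderline case $k=l=n$ and then invoke the classical Morse--Sard theorem for $\CC^n$ mappings. Concretely, for each $j\in\N$ apply Theorem~\ref{Th_ap} with $\varepsilon=1/j$ to obtain an open set $U_j\subset\R^n$ with $\H^n_\infty(U_j)<1/j$ and a function $g_j\in\CC^n(\R^n)$ such that $v\equiv g_j$ and $\nabla v\equiv\nabla g_j$ on $\R^n\setminus U_j$ (we only need agreement of $v$ and its first derivative). Set $Z_{g_j}=\{x\in\R^n:\nabla g_j(x)=0\}$. Then for $x\in Z_v\setminus U_j$ we have $\nabla g_j(x)=\nabla v(x)=0$, so $Z_v\setminus U_j\subset Z_{g_j}$, and since $v$ coincides with $g_j$ on $\R^n\setminus U_j$ this gives $v(Z_v\setminus U_j)\subset g_j(Z_{g_j})$. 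The classical Morse--Sard theorem applies to $g_j$ in the scalar case $m=1$ on $\R^n$ (the smoothness requirement there is exactly $k\ge\max(n-m+1,1)=n$, which is what Theorem~\ref{Th_ap} delivers), so $\Le^1(g_j(Z_{g_j}))=0$; as $\H^1$ and $\Le^1$ agree on subsets of $\R$, we conclude $\H^1(v(Z_v\setminus U_j))=0$ for every $j$.

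Next I would assemble the exceptional set. Put $Z_{0,v}=Z_v\cap\bigcap_{j=1}^\infty U_j$. Since $\H^n_\infty(Z_{0,v})\le\H^n_\infty(U_j)<1/j$ for all $j$, we get $\H^n_\infty(Z_{0,v})=0$, hence $\Le^n(Z_{0,v})=0$ because $\H^n_\infty\sim\Le^n$ on subsets of $\R^n$. Moreover $Z_v\setminus Z_{0,v}=\bigcup_{j=1}^\infty(Z_v\setminus U_j)$, so $v(Z_v\setminus Z_{0,v})=\bigcup_{j=1}^\infty v(Z_v\setminus U_j)$ is a countable union of $\H^1$--null sets and therefore $\H^1(v(Z_v\setminus Z_{0,v}))=0$. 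For the ``in particular'' clause, write $v(Z_v)=v(Z_v\setminus Z_{0,v})\cup v(Z_{0,v})$: subadditivity and the previous line give $\H^1(v(Z_v))\le\H^1(v(Z_{0,v}))$, while $Z_{0,v}\subset Z_v$ gives the reverse inequality, so $\H^1(v(Z_v))=\H^1(v(Z_{0,v}))$.

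I do not expect a genuine obstacle here, since all the real work is packaged into Theorem~\ref{Th_ap}: this corollary is essentially a bookkeeping argument combining that approximation with the classical Morse--Sard theorem and a diagonal intersection over a null sequence of Hausdorff contents. The only points that require a moment's care are (i) verifying that the smoothness index $k=n$ produced by Theorem~\ref{Th_ap} is precisely the one needed for the classical Morse--Sard theorem in the codimension-one scalar setting, and (ii) noting that agreement of $v$ with the $\CC^n$ function only on the complement $\R^n\setminus U_j$ (rather than globally) suffices, which is the case because we only ever evaluate $v$, and compare its critical points with those of $g_j$, on that complement.
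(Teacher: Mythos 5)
Your proof is correct and is precisely the argument the paper has in mind; the paper's own ``proof'' is the single sentence preceding the corollary (apply Theorem~\ref{Th_ap} with $k=l=n$ and invoke the classical Morse--Sard theorem for $\CC^n$ functions), and your write-up is a careful expansion of exactly that, including the diagonal construction of $Z_{0,v}=Z_v\cap\bigcap_j U_j$ and the verification that the index $k=n$ matches the smoothness threshold $k\ge\max(n-m+1,1)$ for $m=1$.
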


From Corollaries~\ref{Th_ap2}, \ref{cor00} we conclude the proof
of Theorem~\ref{MS}.

\section{Application to the level sets of $\WW^{n,1}$ functions}

Theorem \ref{Th_ap} for the case $k=1$ implies the following

\begin{ttt}
\label{Th_ap3} {\sl  Let $v\in\WW^{n,1}(\R^n)$. Then for any
$\varepsilon>0$ there exist an open set $U\subset\R^n$ and a
function $g\in \CC^1(\R^n)$ such that $\H^1_\infty(U)<\varepsilon$
and $v\equiv g$, $\nabla v \equiv \nabla g$ on $\R^n\setminus U$.
}
\end{ttt}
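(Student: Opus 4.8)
The plan is to deduce Theorem~\ref{Th_ap3} directly from Theorem~\ref{Th_ap} by specializing the parameters. Since $v\in\WW^{n,1}(\R^n)$, we are in the situation of Theorem~\ref{Th_ap} with $l=n$; choosing $k=1$ (which satisfies $1\le k\le l$) yields, for each $\varepsilon>0$, an open set $U\subset\R^n$ and a function $g\in\CC^1(\R^n)$ with $\H^{n-l+k}_\infty(U)=\H^{1}_\infty(U)<\varepsilon$ and $v\equiv g$, $\nabla^m v\equiv\nabla^m g$ on $\R^n\setminus U$ for $m=1,\dots,k$, i.e. for $m=1$. Spelling out $m=0$ and $m=1$ gives exactly $v\equiv g$ and $\nabla v\equiv\nabla g$ on $\R^n\setminus U$, which is the claimed statement. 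So the theorem is literally the $k=1$, $l=n$ instance of Theorem~\ref{Th_ap}, and no additional argument is needed; one just has to observe that the Hausdorff-content exponent $n-l+k$ collapses to $1$ in this case.

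Since there is essentially nothing to prove beyond this substitution, I would at most add a sentence recalling why the hypotheses of Theorem~\ref{Th_ap} are met: $v\in\WW^{n,1}(\R^n)$ is precisely a $\WW^{l,1}$ function with $l=n$, and $k=1$ lies in $\{1,\dots,n\}$ with $k\le l$. The only conceptual point worth flagging for the reader is that this $\CC^1$-approximation-with-matching-gradient result, together with the classical structure of regular level sets of $\CC^1$ functions and the Morse--Sard result Theorem~\ref{MS}, is what will feed into the proof of Theorem~\ref{Th2.1} on the structure of almost all level sets of $v$.

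There is no real obstacle here: the difficulty was already absorbed into Theorem~\ref{Th_ap}, whose proof combines the Adams-type maximal-function estimate (Theorem~\ref{lb7}), the pointwise Taylor-type bounds of Lemma~\ref{dorl2}, and the Whitney Extension Theorem~\ref{TapW}. If one wanted to be self-contained one could also quote Remark~\ref{remapp} to note that $g$ may in addition be taken close to $v$ in $\WW^{2,1}$, but that refinement is not needed for the present statement. Thus the write-up is a one-line corollary-style deduction.
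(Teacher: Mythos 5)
Your deduction is exactly the paper's: Theorem~\ref{Th_ap3} is stated there as the $k=1$ (and $l=n$) instance of Theorem~\ref{Th_ap}, with the exponent $n-l+k$ collapsing to $1$. The proposal is correct and takes the same route; nothing further is needed.
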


If we apply Theorems~\ref{th3.3}, \ref{MS} to the last assertion,
we obtain

\begin{cor}
\label{cor2.4} {\sl Let $v\in\WW^{n,1}(\R^n)$. Then for any
$\varepsilon>0$ there exist an open set $V\subset\R$ and a
function $g\in \CC^1(\R^n)$ such that $\H^1(V)<\varepsilon$,
$v(A_v)\subset V$ and $v|_{v^{-1}(\R\setminus
V)}=g|_{v^{-1}(\R\setminus V)}$, $\nabla v|_{v^{-1}(\R\setminus
V)}=\nabla g|_{v^{-1}(\R\setminus V)}\ne0$. }
\end{cor}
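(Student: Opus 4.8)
The statement to prove is Corollary~\ref{cor2.4}, which we obtain by combining Theorem~\ref{Th_ap3} with the Luzin $N$-property (Theorem~\ref{th3.3}) and the Morse--Sard theorem (Theorem~\ref{MS}). First I would invoke Theorem~\ref{Th_ap3}: for $\varepsilon>0$ choose an open set $U\subset\R^n$ and $g\in\CC^1(\R^n)$ with $\H^1_\infty(U)<\delta$ (where $\delta$ is yet to be fixed in terms of $\varepsilon$), such that $v\equiv g$ and $\nabla v\equiv\nabla g$ on $\R^n\setminus U$. The natural candidate for the exceptional set in $\R$ is
$$
V=v(A_v)\cup v(Z_v)\cup v(U),
$$
and the set $V$ should be enlarged slightly to an \emph{open} set still of small measure at the end. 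The job is then to check three things: (i) $\Le^1(V)<\varepsilon$; (ii) $v(A_v)\subset V$, which is immediate by construction; and (iii) on $v^{-1}(\R\setminus V)$ one has $v=g$, $\nabla v=\nabla g$, and $\nabla g\ne 0$.

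\textbf{Key steps.} For (i): $\Le^1(v(A_v))=0$ by Theorem~\ref{th3.3} (since $\H^1(A_v)=0$); $\Le^1(v(Z_v))=0$ by Theorem~\ref{MS}; and $\Le^1(v(U))<\varepsilon/2$ provided $\delta=\delta(\varepsilon,v)$ is chosen according to Theorem~\ref{th3.3} applied to the set $U$ with $\H^1_\infty(U)<\delta$. Thus $\Le^1(V)<\varepsilon/2$, and we may pick an open $V'\supset V$ with $\Le^1(V')<\varepsilon$, renaming $V'$ as $V$. For (iii): fix $x\in v^{-1}(\R\setminus V)$, so $v(x)\notin V$. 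Since $v(U)\subset V$, we have $x\notin U$, hence $v(x)=g(x)$ and $\nabla v(x)=\nabla g(x)$ by the conclusion of Theorem~\ref{Th_ap3}; in particular $v$ is differentiable at $x$, so $x\notin A_v$. It remains to see $\nabla v(x)\ne 0$: if $\nabla v(x)=0$ then $x\in Z_v$ (using $x\notin A_v$), whence $v(x)\in v(Z_v)\subset V$, contradicting $v(x)\notin V$. Therefore $\nabla g(x)=\nabla v(x)\ne 0$ on all of $v^{-1}(\R\setminus V)$.

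\textbf{Main obstacle.} There is no real obstacle — the argument is a bookkeeping combination of the three earlier results — but the one point requiring a little care is the quantitative choice of $\delta$ in the application of Theorem~\ref{th3.3} to the open set $U$: one must apply Theorem~\ref{th3.3} \emph{first} to extract the $\delta$ corresponding to $\varepsilon/2$, and only \emph{then} invoke Theorem~\ref{Th_ap3} with that $\delta$ as the tolerance for $\H^1_\infty(U)$. The other subtlety worth a sentence is that $V$ as initially defined need not be open, so the final passage to a slightly larger open set of measure still $<\varepsilon$ (possible since Lebesgue measure on $\R$ is outer regular) is what secures the openness assertion in the statement.
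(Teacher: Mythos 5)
Your proposal is correct and follows the paper's intended route exactly: the paper gives no separate proof of Corollary~\ref{cor2.4} beyond the remark that it follows by applying Theorems~\ref{th3.3} and~\ref{MS} to Theorem~\ref{Th_ap3}, which is precisely your construction $V\supset v(A_v)\cup v(Z_v)\cup v(U)$ with $\delta$ fixed first via Theorem~\ref{th3.3} and $V$ then fattened to an open set by outer regularity. One small tightening: to conclude $x\notin A_v$ for $x\in v^{-1}(\R\setminus V)$ you need not pass through a differentiability argument via Theorem~\ref{Th_ap3} (which only asserts equality of precise-representative gradients off $U$, not Fr\'echet differentiability per se); it is immediate from $v(A_v)\subset V$ and $v(x)\notin V$.
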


Finally we have

\begin{ttt}
\label{Th2.1} {\sl Let $v\in\WW^{n,1}(\R^n)$. Then for almost all
$y\in\R$ the preimage $v^{-1}(y)$ is a finite disjoint family of
$(n-1)$-dimensional $\CC^1$-smooth compact manifolds (without
boundary) $S_j$, $j=1,\dots,N(y)$. }
\end{ttt}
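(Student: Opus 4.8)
The plan is to derive the statement from the $\CC^1$--Luzin approximation (Theorem~\ref{Th_ap3}) together with the implicit function theorem, once the level sets are known to be compact and to avoid the ``bad'' sets $A_v$ and $Z_v$.

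First I would check that $v\in\CC_0(\R^n)$, i.e.\ $v(x)\to0$ as $|x|\to\infty$. By Lemma~\ref{lemc3} we may write $v=v_0+v_1$ with $v_0\in\CC_0^\infty(\R^n)$ and $\|v_1\|_{\WW^{n,1}}$ arbitrarily small, and since $\|v_1\|_{\LL^\infty}\le C\|v_1\|_{\WW^{n,1}}$ by the endpoint Sobolev embedding $\WW^{n,1}(\R^n)\hookrightarrow\LL^\infty(\R^n)$ (cf.\ estimate~(\ref{1.1})), $v$ is a uniform limit of compactly supported functions. Hence $v^{-1}(y)$ is compact for every $y\ne0$, and almost every $y$ satisfies $y\ne0$. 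Moreover $\H^1(v(A_v))=0$ by Theorem~\ref{th3.3} and $\H^1(v(Z_v))=0$ by Theorem~\ref{MS}, so for almost every $y$ we also have $v^{-1}(y)\cap(A_v\cup Z_v)=\emptyset$; for such $y$, $v$ is classically differentiable with $\nabla v\ne0$ at every point of $v^{-1}(y)$.

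Now apply Theorem~\ref{Th_ap3} with $\varepsilon=1/j$: there are open sets $U_j\subset\R^n$ with $\H^1_\infty(U_j)<1/j$ and functions $g_j\in\CC^1(\R^n)$ with $v\equiv g_j$ and $\nabla v\equiv\nabla g_j$ on $\R^n\setminus U_j$. The key point is to show that \emph{for almost every $y$ there is a $j$ with $v^{-1}(y)\cap\Cl U_j=\emptyset$}, equivalently that $\bigcap_j v(\Cl U_j)$ is Lebesgue null. Granting this, fix such a $y$ and $j$, and set $\Omega:=\R^n\setminus\Cl U_j$; then $\Omega$ is an open neighbourhood of $v^{-1}(y)$ on which $v\equiv g_j$, so $v\in\CC^1(\Omega)$ with $\nabla v\equiv\nabla g_j$ there. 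In particular $v^{-1}(y)=g_j^{-1}(y)\cap\Omega$, and at each of its points $\nabla g_j=\nabla v\ne0$, so $y$ is a regular value of $g_j|_\Omega$. By the implicit function theorem $v^{-1}(y)=g_j^{-1}(y)\cap\Omega$ is a $\CC^1$--embedded $(n-1)$--dimensional submanifold of $\R^n$ without boundary; being also compact, it has only finitely many connected components $S_1,\dots,S_{N(y)}$, each a compact connected $\CC^1$--smooth $(n-1)$--manifold without boundary, and $v^{-1}(y)=\bigsqcup_{k=1}^{N(y)}S_k$, as required.

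The one genuinely delicate step, which I expect to be the main obstacle, is the claim $\H^1\bigl(\bigcap_j v(\Cl U_j)\bigr)=0$. By the Luzin $N$ property (Theorem~\ref{th3.3}) it would suffice to choose the exceptional sets so that not only $\H^1_\infty(U_j)$ but also $\H^1_\infty(\Cl U_j)$ is small — less than a prescribed $\delta(\varepsilon_j)$ with $\varepsilon_j\to0$ — for then $\H^1(v(\Cl U_j))<\varepsilon_j$ and the intersection is null. Enlarging an arbitrary small--Hausdorff--content open set to one whose closure still has small content is impossible in general, so one is forced to use the explicit structure of $U_j$ coming from the proof of Theorem~\ref{Th_ap} (a neighbourhood of the $\H^1$--null set $A_v$ together with super--level sets $\{\M\nabla\tilde f_j>2^j\}$ of Hardy--Littlewood maximal functions of small--norm pieces of $v$) and the weak--type/Adams bound of Theorem~\ref{lb7} in order to build these neighbourhoods — for instance as suitable countable unions of balls — with controlled closures. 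Handling this carefully is the crux; the rest is the routine implicit function theorem argument sketched above.
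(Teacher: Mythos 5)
Your preliminary reductions are correct: by the endpoint Sobolev embedding $v\in\CC_0(\R^n)$, so $v^{-1}(y)$ is compact for $y\ne0$; by Theorem~\ref{th3.3} and Theorem~\ref{MS} (and $\H^1(A_v)=0$), almost every $y$ avoids $v(A_v)\cup v(Z_v)$, so $v$ is classically differentiable with $\nabla v\ne0$ on $v^{-1}(y)$; and the implicit-function-theorem endgame is fine once $v$ is known to be $\CC^1$ on an \emph{open} neighbourhood of $v^{-1}(y)$. The trouble is precisely the route you choose to obtain such a neighbourhood. Requiring $v^{-1}(y)\cap\Cl U_j=\emptyset$ via $\H^1_\infty(\Cl U_j)$ (or $\H^1(v(\Cl U_j))$) small is not merely delicate: it can fail outright. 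In the proof of Theorem~\ref{Th_ap3} the exceptional set $G_i$ necessarily contains the $\H^1$-null set $A_1$ of points where the precise representative of $\nabla v$ is undefined, and nothing prevents $A_1$ from being dense in a region of positive measure. If it is, \emph{every} open $U_j\supset G_i$ has closure containing that region, so $v(\Cl U_j)$ contains a nondegenerate interval and your intersection $\bigcap_j v(\Cl U_j)$ has positive measure; covering by balls cannot repair this, since any countable family of balls covering a dense set is dense.

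The paper sidesteps this entirely. It demands only $v^{-1}(y)\cap U=\emptyset$, which does follow from the Luzin property (choose $y$ outside an open $V\subset\R$ containing $v(U)\cup v(A_v)\cup v(Z_v)$ with $\H^1(V)$ small; this is Corollary~\ref{cor2.4}). Then $F_v:=v^{-1}(y)$ sits inside the closed set $\R^n\setminus U$ where $v\equiv g$, so $F_v\subset F_g:=g^{-1}(y)$ and $\nabla g=\nabla v\ne0$ on $F_v$; but this alone does not make $F_v$ a manifold, because a closed subset of a level set of $g$ need not be open in it. The missing ingredient is the topological property (vi) of the paper's proof: at each $x_0\in F_v$ there is $r>0$ with $F_v\cap B(x_0,r)=F_g\cap B(x_0,r)$. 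Its proof is a one-dimensional connectedness (intermediate value) argument along short segments $I_x$ parallel to $\nabla v(x_0)$: if $F_g\setminus F_v\ni x_i\to x_0$, each $I_{x_i}$ meets $F_g$ only at $x_i$, hence misses $F_v$, so $v-y$ has constant sign on $I_{x_i}$; passing to the limit forces $v\ge y$ (or $\le y$) on $I_{x_0}$, contradicting differentiability of $v$ at $x_0$ with $\nabla v(x_0)\ne0$ in the direction of $I_{x_0}$. With (vi) in hand, $F_v$ is locally equal to the $\CC^1$ manifold $F_g$ near every point of $F_v$, and compactness finishes the proof. This local identification, not control of closures of exceptional sets, is the idea your sketch is missing.
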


\begin{proof}
The inclusion $v\in \WW^{n,1}(\R^n)$ and Lemma~\ref{lb3} easily
imply the following statement:

\begin{itemize}
\item[(i)] For any $\varepsilon>0$ there exists $R_\varepsilon\in
(0,+\infty)$ such that $|v(x)|<\varepsilon$ for all
$x\in\R^n\setminus B(0, R_\varepsilon)$.
\end{itemize}

Fix arbitrary $\varepsilon>0$. Take the corresponding set $V$ and
function~$g\in C^1(\R^n)$ from Corollary~\ref{cor2.4}. Let $0\ne
y\in v(\R^n)\setminus V$. Denote $F_v=v^{-1}(y)$, $F_g=g^{-1}(y)$.
We assert the following properties of these sets.

\begin{itemize}
\item[(ii)]  $F_v$ is a compact set;

\item[(iii)] $F_v\subset F_g$;

\item[(iv)] $\nabla v=\nabla g\ne 0$ on $F_v$;

\item[(v)] The function $v$ is differentiable (in the classical
sense) at each~$x \in F_v$, and the classical derivative coincides
with~$\nabla v(x)=\lim\limits_{r\to 0} \dashint_{B(x,r)}{\nabla
v}(z)\,\dd z$.
\end{itemize}

\noindent Indeed, (ii) follows from~(i), (iii)-(iv) follow from
Corollary~\ref{cor2.4}, and (v) follows from the condition
$v(A_v)\subset V$ of Corollary~\ref{cor2.4}.

\noindent We require one more property of these sets:

\begin{itemize}
\item[(vi)] For any $x_0\in F_v$ there exists $r>0$ such that
$F_v\cap B(x_0,r)=F_g\cap B(x_0,r)$.
\end{itemize}

\noindent Indeed, take any point $x_0\in F_v$ and suppose the
claim~(vi) is false. Then there exists a sequence of points
$F_g\setminus F_v\ni x_i\to x_0$. Denote by $I_x$ the straight
line segment of length $r$ with the center at~$x$ parallel to the
vector $\nabla v(x_0)=\nabla g(x_0)$. Evidently, for sufficiently
small $r>0$ the equality $I_x\cap F_g=\{x\}$ holds for any~$x\in
F_g\cap B(x_0,r)$. Then by construction $I_{x_i}\cap
F_v=\emptyset$ for sufficiently large $i$. Hence for sufficiently
large $i$ either $v>y$ on $I_{x_i}$ or $v<y$ on $I_{x_i}$. For
definiteness, suppose $v>y$ on $I_{x_i}$ for all $i\in\mathbb N$.
In the limit we obtain the inequality $v\ge y=v(x_0)$ on
$I_{x_0}$. But the last assertion contradicts~(iv)--(v). This
contradiction finishes the proof of~(vi).

Obviously, (ii)-(vi) imply that each connected component of the
set $F_v=v^{-1}(y)$ is a compact $(n-1)$-dimensional $C^1$-smooth
manifold (without boundary).
\end{proof}

\section{On the case of $\BV_n$ functions}
\label{bvs}

For signed or vector--valued Radon measures $\mu$ we denote by
$\|\mu\|$ the total variation measure of $\mu$. The space $\BV_k
(\R^n)$ is as usual defined as consisting of those functions $f\in
W^{k-1,1}(\R^n)$ whose distributional partial derivatives of order
$k$ are Radon measures with $\|D^kf\|(\R^n)<\infty$, where we
denote by $D^kf$ the vector-valued measure consisting of all
$k$-order partial derivatives of $f$ (for detailed definitions and
differentiability properties of such functions see, e.g.,
\cite{EG}, \cite{Ziem}, \cite{Dor}). In particular, the following
fact is well-known.

\begin{lem}\label{bv}{\sl
Let $f\in \BV_k(\R^n)$. Then there exists a sequence $f_i\in
\CC^\infty_0(\R^n)$ such that
$$
\|f_i-f\|_{\WW^{k-1,1}}\to 0 \ , \quad  \|\nabla^k
f_i\|_{\LL^1}\le C, \mbox{ and}
$$
$$
\|\nabla^k f_i\|_{\LL^1(U)}\to \|D^kf\|(U)
$$
for any open subset $U \subset \R^n$ with $\| D^{k}f \| (\partial
U)=0$.}
\end{lem}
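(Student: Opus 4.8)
The plan is to mirror the proof structure of the Sobolev case (Theorem~\ref{Th2.1}), but since the final statement here is actually Lemma~\ref{bv}, I need to establish the approximation property for $\BV_k$ functions. First I would recall that $f \in \BV_k(\R^n)$ means the distributional derivatives $D^k f$ are finite Radon measures. The natural approach is mollification: set $f_i = f * \rho_{\varepsilon_i}$ for a standard mollifier $\rho_\varepsilon$ and a suitable sequence $\varepsilon_i \searrow 0$, then truncate to obtain compact support. Since $f \in \WW^{k-1,1}(\R^n)$ (because the $(k-1)$-order derivatives are $\LL^1$ functions, being antiderivatives in the distributional sense of the measures $D^k f$ — this needs the fact that $\BV_k \subset \WW^{k-1,1}$, which follows because $D^{k-1}f$ has distributional gradient a finite measure, hence $D^{k-1}f \in \BV \subset \LL^1_{\loc}$, and a slicing/global integrability argument gives $\LL^1$), standard mollification theory gives $\|f * \rho_\varepsilon - f\|_{\WW^{k-1,1}} \to 0$.

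Next I would handle the $k$-th derivative bound. One has $\nabla^k(f * \rho_\varepsilon) = (D^k f) * \rho_\varepsilon$ in the sense of convolving a measure with a smooth kernel, so $\|\nabla^k(f*\rho_\varepsilon)\|_{\LL^1} \le \|D^k f\|(\R^n) =: C$, using Fubini and $\int \rho_\varepsilon = 1$. For the weak-$*$ convergence statement, the key point is that $(D^k f) * \rho_\varepsilon \, \Le^n \rightharpoonup D^k f$ weakly-$*$ as measures; then for any open $U$ with $\|D^k f\|(\partial U) = 0$ the Portmanteau-type theorem gives $\|\nabla^k(f*\rho_\varepsilon)\|_{\LL^1(U)} = \|(D^k f)*\rho_\varepsilon\, \Le^n\|(U) \to \|D^k f\|(U)$. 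Here one must be slightly careful: lower semicontinuity gives $\liminf \ge$, and the total-mass convergence $\|(D^k f)*\rho_\varepsilon\|_{\TV} \to \|D^k f\|_{\TV}$ together with the same inequality on $\R^n \setminus \overline{U}$ upgrades this to genuine convergence when the boundary is null. This is the classical argument for approximating $\BV$ functions (e.g.\ as in \cite{EG}, adapted to higher order).

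Finally I would incorporate the compact support requirement. Multiplying the mollified function by a cutoff $\chi_R$ that is $1$ on $B(0,R)$ and supported in $B(0,2R)$ introduces lower-order error terms $\nabla^k(\chi_R (f*\rho_\varepsilon)) - \chi_R \nabla^k(f*\rho_\varepsilon)$ involving derivatives of $\chi_R$ against $\nabla^{<k}(f*\rho_\varepsilon)$; since $f*\rho_\varepsilon \to f$ in $\WW^{k-1,1}(\R^n)$ and $\|\nabla^{j} f\|_{\LL^1(\R^n \setminus B(0,R))} \to 0$ as $R \to \infty$ for $j \le k-1$, a joint diagonal choice of $\varepsilon_i \to 0$ and $R_i \to \infty$ kills these terms while preserving $\|\nabla^k f_i\|_{\LL^1(U)} \to \|D^k f\|(U)$ for each fixed $U$ (the cutoff eventually equals $1$ on any fixed $U$). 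The main obstacle I anticipate is the careful bookkeeping in this last diagonalization: one must ensure the cutoff errors vanish uniformly enough not to disturb the weak-$*$ convergence on every admissible $U$ simultaneously, and that the claimed uniform bound $\|\nabla^k f_i\|_{\LL^1} \le C$ survives the truncation (it does, up to replacing $C$ by $C + o(1)$, absorbing the lower-order cutoff terms which are bounded by $\|\nabla\chi_R\|_\infty \cdot \|\nabla^{k-1}(f*\rho_\varepsilon)\|_{\LL^1}$ and made small by choosing $R_i$ large after $\varepsilon_i$).
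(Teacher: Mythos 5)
The paper does not actually give a proof of Lemma~\ref{bv}; it records it as ``well-known'' (it is the $k$-th order analogue of the classical statement that $\BV$ functions can be approximated strictly by smooth functions, cf.\ \cite{EG}). Your mollify-and-truncate argument is exactly the standard proof of that fact, and it is correct in substance: $\nabla^k(f*\rho_\varepsilon)=(D^kf)*\rho_\varepsilon$ gives the uniform $\LL^1$ bound, weak-$*$ convergence plus convergence of total masses upgrades lower semicontinuity to genuine convergence on sets whose boundary is $\|D^kf\|$-null, and the cutoff errors are killed by a diagonal choice of $\varepsilon_i\to0$, $R_i\to\infty$.

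One small slip worth flagging: the parenthetical justification ``the cutoff eventually equals $1$ on any fixed $U$'' is only valid when $U$ is bounded, whereas the lemma allows arbitrary open $U$. This does not break the proof because the mechanism you already set up works in general: once one checks that $\nabla^k f_i\,\Le^n\rightharpoonup D^kf$ weakly-$*$ and that $\|\nabla^k f_i\|_{\LL^1(\R^n)}\to\|D^kf\|(\R^n)$ (the main term $\chi_{R_i}\bigl((D^kf)*\rho_{\varepsilon_i}\bigr)$ already has both properties, and the Leibniz error terms vanish in $\LL^1$), the Portmanteau argument you described applies to \emph{every} open $U$ with $\|D^kf\|(\partial U)=0$, bounded or not. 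So replace the parenthetical by an appeal to that argument. The other point you flag, namely $\BV_k(\R^n)\subset\WW^{k-1,1}(\R^n)$, is indeed not completely trivial, but it is implicitly built into the paper's use of the norm $\|\cdot\|_{\BV_l}$ and follows from Gagliardo--Nirenberg interpolation applied to mollifications; your sketch is in the right spirit, though it would deserve a citation or a short interpolation computation in a fully written proof.
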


The results obtained in the previous sections were established for
functions of class $\BV_2(\R^2)$ in~\cite{BKK}, hence in the
present section we consider only functions of class $\BV_n(\R^n)$
for $n\ge3$. Recall that in this case $\nabla^k v(x)$ is
well-defined for $\mathcal H^{k}$-almost all $x\in\R^n$,
$k=1,\dots,n-2$ (see \cite{Dor}). In particular, $v$ is
differentiable (in the classical Fr\'{e}chet sense) at all points
$x\in\R^n\setminus A_{v}$, where $\H^1(A_v)=0$. Denote $Z_v=\{x\in
\R^n\setminus A_v:\nabla v(x)=0\}$. Most of the results from the
previous sections remain valid for functions $v\in \BV_n(\R^n)$.
More precisely, Theorem~\ref{th3.3}, Lemma~\ref{Thh3.3} for $k\le
n-2$, Theorem~\ref{MS}, Theorems~\ref{Th_ap3} and \ref{Th2.1} are
also true in this more general $\BV$ context. Except for
Theorem~\ref{MS}, whose proof we discuss below, the proofs are
entirely analogous. Also, the assertion of approximation
Theorem~\ref{Th_ap} remains valid for $f\in\BV_l(\R^n)$,
$k,l\in\{1,\dots,n\}$, $k\le l$, $k\ne l-1$ (for the case $k=l$ it
follows immediately  from the results of~\cite{Dor} and
\cite{Liu}; the proof for $k\le l-2$ will be discussed below).

On the other hand, the assertion of Lemma~\ref{Thh3.3} for $k=
n-1$ is not valid for a general $v\in \BV_n(\R^n)$. Also the
assertion of the Approximation Theorem~\ref{Th_ap} is not valid
for $f\in\BV_l(\R^n)$ when $k= l-1$.

To prove the assertion of the Approximation Theorem~\ref{Th_ap}
for $f\in\BV_l(\R^n)$, $k,l\in\{1,\dots,n\}$ when $k\le l-2$, one
can repeat the arguments from the proof for the Sobolev case (see
Section~3). Proceeding in this manner, one notices that in the
first step it is necessary to have a sequence of functions $f_i\in
\CC^k(\R^n)$ with $\|f-f_i\|_{\BV_l}\to0$. Such a sequence exists
because of the following result.

\begin{ttt}\label{bv''}{\sl
Let $f\in \BV_l(\R^n)$, $l\le n$. Then for any $\varepsilon>0$
there exists a function $g\in  \BV_l(\R^n)$ such that
\begin{itemize}

\item[(i)] \ $\|f-g\|_{\BV_l}<\varepsilon$;

\item[(ii)] $g\in\CC^{l-2,1}(\R^n),$ i.e., $g\in\CC^{l-2}(\R^n)$
and $\nabla^{l-2}g$ is a Lipschitz function;

\item[(iii)] \ there exist an open set $U\subset\R^n$  such that
$\H^{n-1}_\infty(U)<\varepsilon$  and $f\equiv g$, $\nabla^mf
\equiv \nabla^mg$ on $\R^n\setminus U$ for $m=1,\dots,l-2$.
\end{itemize}
}
\end{ttt}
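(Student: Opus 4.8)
We may assume $l\ge2$, the statement being empty otherwise. Put $\mu=\|D^lf\|$, a finite positive Radon measure; since $\nabla^{l-1}f\in\BV(\R^n)$, $\mu$ vanishes on sets of zero $\H^{n-1}$-measure (its absolutely continuous part is a multiple of $\mathcal L^n$ and so is supported off $\H^{n-1}$-null sets, its jump part is concentrated on a countably $\H^{n-1}$-rectifiable set, and its Cantor part vanishes on sets of $\sigma$-finite $\H^{n-1}$-measure). The plan follows the Bojarski-type scheme of Section~3: remove a small open ``bad'' set $U$, establish a \emph{linear}-modulus Whitney condition for the data $(\partial^\alpha f)_{|\alpha|\le l-2}$ on $E:=\R^n\setminus U$, let $g$ be an explicit Whitney extension of that data, and control $g$ quantitatively over $U$ to obtain~(i).

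Since $\nabla^{l-2}f$ has distributional gradient $\nabla^{l-1}f\in\LL^1(\R^n)$, Theorem~\ref{lb7} applies with $k=1$ to $\nabla^{l-2}f$ and gives $\H^{n-1}_\infty(\{\M\nabla^{l-1}f>\lambda\})\le C\|\nabla^{l-1}f\|_{\LL^1}/\lambda$. The set $\{\M\nabla^{l-1}f>\lambda\}$ is open, decreases as $\lambda\uparrow$ to the $\H^{n-1}$-null set $\{\M\nabla^{l-1}f=\infty\}$, at each Lebesgue point $x$ lying outside it $|\nabla^{l-1}f(x)|\le C\lambda$, and a routine truncation argument shows $\lambda\,\mathcal L^n(\{\M\nabla^{l-1}f>\lambda\})\to0$ as $\lambda\to\infty$. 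Using also $\mu\ll\H^{n-1}$ and $\nabla^mf\in\LL^1$ ($m\le l-1$), fix $\lambda$ so large, and slightly enlarge $\{\M\nabla^{l-1}f>\lambda\}$ by an open set of negligible content and Lebesgue measure covering the $\H^{n-1}$-null set of non-Lebesgue points of the $\partial^\alpha f$'s, that the resulting open $U$ (so that $E:=\R^n\setminus U$ is closed) has $\H^{n-1}_\infty(U)<\varepsilon$, and has $\mu(U')$, $\mathcal L^n(U)$, $\lambda\,\mathcal L^n(U)$ and $\|\nabla^mf\|_{\LL^1(U')}$ ($m=0,\dots,l-1$) all $<\varepsilon_0$, where $U'$ is a fixed open neighbourhood of $\overline U$ and $\varepsilon_0=\varepsilon_0(n,l,\varepsilon)$ is a small parameter fixed later. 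Now Lemma~\ref{dorl2} (with $u=f$, $k=l-1$) gives, for $x\in E$, any interval $I\ni x$ and any $m\le l-2$, the bound $|\nabla^mf(x)-\nabla^mP_{I,l-2}[f](x)|\le C\lambda\,\ell(I)^{l-1-m}$; since $\nabla^{l-2}P_{I,l-2}[f]$ is constant, taking $\ell(I)\sim|x-y|$ and telescoping over dyadic scales yields
\begin{equation}\label{plan:taylor}
\bigl|\partial^\alpha f(y)-T_\alpha(f,x;y)\bigr|\le C\lambda\,|x-y|^{\,l-1-|\alpha|}\qquad(x,y\in E,\ |\alpha|\le l-2),
\end{equation}
$T_\alpha(f,x;\cdot)$ being the degree-$(l-2-|\alpha|)$ Taylor polynomial of $\partial^\alpha f$ at $x$. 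This is exactly the hypothesis of the Whitney Extension Theorem~\ref{TapW} with $k=l-2$ and the \emph{linear} modulus $\omega(t)=C\lambda t$, whose conclusion provides an extension of class $\CC^{l-2,1}$.

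It remains to realize the extension explicitly and prove~(i). Take a Whitney decomposition $\{Q_j\}$ of $U$, a subordinate smooth partition of unity $\{\varphi_j\}$ with $|\nabla^m\varphi_j|\le C\ell(Q_j)^{-m}$, and for each $j$ a cube or ball $Q_j^*$ of diameter $\sim\ell(Q_j)$ within distance $\sim\ell(Q_j)$ of $Q_j$; set $P_j=P_{Q_j^*,l-1}[f]$ and $g=\sum_j\varphi_jP_j$ on $U$, $g=f$ on $E$. As in the proof of Theorem~\ref{Th_ap} (working throughout with precise representatives), \eqref{plan:taylor} shows $g\in\CC^{l-2}(\R^n)$, $\partial^\alpha g=\partial^\alpha f$ on $E$ for $|\alpha|\le l-2$, $\nabla^{l-2}g$ Lipschitz with constant $\le C\lambda$, and $g\in\CC^\infty(U)$; this gives (ii)--(iii). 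For~(i), the difference $h:=f-g$ has $\nabla^mh=0$ on $E$ for $m\le l-2$ and $\nabla^{l-1}h=0$ $\mathcal L^n$-a.e. on $E$, so
$$
\|f-g\|_{\BV_l}=\sum_{m=0}^{l-1}\|\nabla^mh\|_{\LL^1(U)}+\|D^lh\|(\overline U).
$$
Here $\|\nabla^mf\|_{\LL^1(U)}<\varepsilon_0$, and summing the standard Whitney-extension estimates over $\{Q_j\}$ with bounded overlap — using $\nabla^mf\in\LL^1$ ($m\le l-1$), the bound $|\nabla^{l-1}f|\le C\lambda$ on $E$, and a Poincaré inequality for the averaged polynomials $P_j$ — bounds $\|\nabla^mg\|_{\LL^1(U)}$ ($m\le l-1$) by $C\bigl(\sum_{p\le l-1}\|\nabla^pf\|_{\LL^1(U')}+\lambda\,\mathcal L^n(U)+\mu(U')\bigr)\lesssim\varepsilon_0$. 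For the top term, $\|D^lf\|(\overline U)=\mu(\overline U)<\varepsilon_0$; and as $g$ is smooth on $U$, $\|D^lg\|(\overline U)=\int_U|\nabla^lg|\,\dd\mathcal L^n+\|(D^lg)^s\|(\partial U)$, where a $\BV$ higher-order Poincaré estimate comparing neighbouring $P_j$ gives $|\nabla^lg|\le C\,\mu(\hat Q_j)/\ell(Q_j)^n$ on $Q_j$, so $\int_U|\nabla^lg|\le C\sum_j\mu(\hat Q_j)\lesssim\mu(U')<C\varepsilon_0$, while $(D^lg)^s$ is carried by $\partial U$ and, at $\H^{n-1}$-a.e. point of $\partial U$, equals $0$ where $\nabla^{l-1}f$ is approximately continuous (that is, $\H^{n-1}$-a.e., by the theorem of Federer and Vol'pert) and is otherwise dominated by the jump of $\nabla^{l-1}f$, whence $\|(D^lg)^s\|(\partial U)\le C\mu(\overline U)<C\varepsilon_0$. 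Choosing $\varepsilon_0$ sufficiently small then yields $\|f-g\|_{\BV_l}<\varepsilon$ and in particular $g\in\BV_l(\R^n)$. The steps requiring genuine care are the $\BV$ higher-order Poincaré estimate for $\nabla^lg$ (one must keep the Whitney construction compatible with the bad set so that the enlarged cubes $\hat Q_j$ sum into a set where $\mu$ is still small) and the bound on $\|(D^lg)^s\|(\partial U)$; both rest on $\mu\ll\H^{n-1}$ together with the smallness of $\mu$ near $\overline U$, and it is precisely the absence of such control — the jump set of $\nabla^{l-1}f$ may have arbitrarily large $\H^{n-1}$-content — that makes the analogue of Theorem~\ref{Th_ap} fail at $k=l-1$ for $\BV_l$ and forces the matching order here to be $l-2$.
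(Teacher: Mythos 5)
Your plan is essentially the paper's: remove a small open set $U$ on which $\M\nabla^{l-1}f$ is bounded and $\|D^lf\|$ is small, take a Whitney decomposition of $U$ with partition of unity $\{\varphi_j\}$, set $g=f$ on $E=\R^n\setminus U$ and $g=\sum_j\varphi_jP_{Q_j^*,l-1}[f]$ on $U$, and verify the Whitney condition of order $l-2$ with a linear modulus. Two steps, however, contain genuine gaps.

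First, the maximal-function step is mis-stated. You write that Theorem~\ref{lb7} applied with $k=1$ to $\nabla^{l-2}f$ gives control on $\{\M\nabla^{l-1}f>\lambda\}$ in terms of $\|\nabla^{l-1}f\|_{\LL^1}$. Applying Theorem~\ref{lb7} with $k=1$ to $\nabla^{l-2}f\in\WW^{1,1}$ controls $\M(\nabla^{l-2}f)$, not $\M(\nabla^{l-1}f)$. To control $\M\nabla^{l-1}f$ you must apply the result to $\nabla^{l-1}f$ itself, and since $\nabla^{l-1}f$ is only a $\BV$ function (not $\WW^{1,1}$) under the hypothesis $f\in\BV_l$, this requires the $\BV$-version of Theorem~\ref{lb7} (as the paper notes, citing Adams) with the bound $C\|D^lf\|(\R^n)/\lambda$, not $C\|\nabla^{l-1}f\|_{\LL^1}/\lambda$. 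This is fixable, but as written the statement is wrong on both the function and the estimate.

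Second, and more seriously, your bound on $\|D^lg\|(\overline U)$ via a decomposition into $\int_U|\nabla^lg|\,\dd\Le^n$ plus a singular part on $\partial U$, with the claim that $(D^lg)^s\lfloor\partial U$ is ``dominated by the jump of $\nabla^{l-1}f$,'' is not justified. The Whitney extension matches $f$ only up to order $l-2$ on $E$; the one-sided trace of $\nabla^{l-1}g$ from the $U$-side is determined by the extension formula and has no reason to be close to the one-sided trace of $\nabla^{l-1}f$ from the $E$-side, so the jump of $\nabla^{l-1}g$ across $\partial U$ is not controlled by the jump of $\nabla^{l-1}f$. Moreover $\H^{n-1}(\partial U)$ can be infinite, so even an $\LL^\infty$ bound on the jump would not suffice. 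The paper avoids the boundary issue entirely: it writes $f-g=\sum_j\varphi_j(f-P_{Q_j^*,l-1}[f])$, notes that each term lies in $\BV_l(\R^n)$ with support compactly contained in $U$, shows that the series of $\BV_l$-norms is $\le C\|D^lf\|(U)<C\varepsilon$ (using the estimates~(\ref{apbv3}) for the polynomials $P_{Q_j^*,l-1}[f]$), and invokes completeness of $\BV_l(\R^n)$ together with lower semicontinuity of the total variation. This gives $\|f-g\|_{\BV_l}<C\varepsilon$ without ever isolating a singular part on $\partial U$. You should replace your singular-part argument by this series argument.

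Everything else — the linear-modulus Whitney estimate obtained via Lemma~\ref{dorl2}, the conclusion $g\in\CC^{l-2,1}$, and the observation that it is precisely the failure to control $\M\nabla^{l-1}f$ below the top order that blocks the analogue of Theorem~\ref{Th_ap} for $k=l-1$ in $\BV_l$ — matches the paper's reasoning.
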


Very similar results were proved in \cite{Boj} for the case of
Sobolev functions $f\in\WW^{l,p}(\R^n)$ with $p>1$, and our proof
follows the ideas from~\cite{Boj}.

To prove Theorem~\ref{bv''}, we need some preliminary results.

\begin{lem}\label{bv-1}{\sl
Let $f\in \BV_l(\R^n)$, $l\le n$. Then for each $\varepsilon>0$
there exists $\delta>0$ such that for any open set $U\subset\R^n$
we have that if $\H^{n-1}_\infty(U)<\delta$ then
$\|D^lf\|(U)<\varepsilon$. }
\end{lem}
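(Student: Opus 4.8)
The plan is to reduce the statement to a uniform absolute continuity property of the finite positive measure $\|D^lf\|$ with respect to Hausdorff content, exactly as the analogous Sobolev statement (Theorem~\ref{th3.3} and Lemma~\ref{Thh3.3}) was reduced to the $\LL^1$-smallness of $\nabla^l$ on small sets. The key point is that $\mu := \|D^lf\|$ is a finite Radon measure on $\R^n$, so for every $\eta>0$ there is a compact set $K$ with $\mu(\R^n\setminus K)<\eta$; localizing to a bounded neighbourhood of $K$ we may as well assume $\mu$ is supported in a fixed large cube $Q_0$. The task is then to show: given $\varepsilon>0$ there is $\delta>0$ so that $\H^{n-1}_\infty(U)<\delta \Rightarrow \mu(U)<\varepsilon$ for open $U$.

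First I would split $f = f_0 + f_1$ in the spirit of Lemma~\ref{bv} / Lemma~\ref{lemc3}: using Lemma~\ref{bv}, pick $i$ large and set $g_i\in\CC^\infty_0(\R^n)$ with $\|\nabla^l g_i\|_{\LL^1}\le C$ and such that the measure $\|D^l(f-g_i)\|(\R^n)$ is small — more precisely, write $\mu = \nabla^l g_i\,\Le^n + \nu_i$ where $\|\nu_i\|(\R^n)<\varepsilon/2$; this decomposition follows from weak-$*$ convergence of $\nabla^l g_i\,\Le^n$ to $D^lf$ together with the norm convergence on sets of null boundary measure in Lemma~\ref{bv}, after a routine exhaustion argument to control the error on an arbitrary open $U$. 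The singular part $\nu_i$ contributes at most $\varepsilon/2$ to $\mu(U)$ for \emph{every} $U$, with no smallness of $U$ needed. For the absolutely continuous part we need $\delta$ so small that $\H^{n-1}_\infty(U)<\delta$ forces $\int_U|\nabla^l g_i|\,\dd\Le^n<\varepsilon/2$. Since $\nabla^l g_i\in\LL^\infty$ here (indeed $\CC^\infty_0$), and any open $U$ with $\H^{n-1}_\infty(U)<\delta$ can be covered by balls $B_j$ of radii $r_j$ with $\sum r_j^{n-1}<C\delta$, hence $\Le^n(U)\le\sum\Le^n(B_j)\le C\sum r_j^n\le C(\operatorname{diam}Q_0)\sum r_j^{n-1}<C\delta$, we get $\int_U|\nabla^l g_i|\le\|\nabla^l g_i\|_{\LL^\infty}\Le^n(U)<\varepsilon/2$ once $\delta$ is chosen accordingly. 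Adding the two bounds gives $\mu(U)<\varepsilon$.

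The step I expect to be the main obstacle — though it is genuinely routine rather than deep — is the passage from the weak-$*$ statement ``$\nabla^l g_i\,\Le^n\rightharpoonup D^lf$ with $\|\nabla^l g_i\|_{\LL^1(U)}\to\|D^lf\|(U)$ for $U$ with $\|D^lf\|(\partial U)=0$'' in Lemma~\ref{bv} to the \emph{uniform} decomposition $\mu = \nabla^l g_i\,\Le^n + \nu_i$ with $\|\nu_i\|$ small, valid simultaneously for all open $U$. The clean way is: fix $i$ with $\|\nabla^l g_i\|_{\LL^1(\R^n)}\le\|D^lf\|(\R^n)+\varepsilon/4$ (possible since the total masses converge along a suitable subsequence, as $\|D^l g_i\|(\R^n)\to\|D^lf\|(\R^n)$ by choosing $U=\R^n$ or a large cube containing all supports); then for the signed/vector measure $\nu_i := D^lf - \nabla^l g_i\,\Le^n$ lower semicontinuity of total variation under weak-$*$ convergence together with convergence of total masses yields $\|\nu_i\|(\R^n)\to 0$. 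Once this is in hand the remainder of the argument is the elementary content-versus-Lebesgue-measure estimate above. Note that the hypothesis $l\le n$ is used only to ensure the spaces in Lemma~\ref{bv} are the relevant ones; no embedding beyond $\LL^\infty$ control of the smooth approximants is needed, since we never need to gain integrability — smallness of $\Le^n(U)$ is forced directly by smallness of $\H^{n-1}_\infty(U)$ on the bounded region where $\mu$ is essentially concentrated.
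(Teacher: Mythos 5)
Your argument breaks down at the claim that one can choose $g_i \in \CC^\infty_0(\R^n)$ so that the remainder $\nu_i := D^l f - \nabla^l g_i\,\Le^n$ satisfies $\|\nu_i\|(\R^n)\to 0$. Strict convergence in $\BV_l$ (weak-$*$ convergence of $\nabla^l g_i\,\Le^n$ to $D^lf$ together with convergence of total masses, which is exactly what Lemma~\ref{bv} provides) does \emph{not} imply convergence in total variation norm. If $\|D^l(f-g_i)\|(\R^n)\to 0$ for smooth $g_i$ held, then $D^lf$ would be absolutely continuous with respect to $\Le^n$, which fails for general $\BV_l$ functions: already for $l=1$ and $f=\mathbf{1}_E$ with $E$ of finite perimeter, $Df$ is purely singular, and $\|D(f-g_i)\|(\R^n)=\|Df\|(\R^n)+\|\nabla g_i\|_{\LL^1}\to 2\|Df\|(\R^n)\neq 0$. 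The lower semicontinuity you cite only gives $\liminf\|\nu_i\|(\R^n)\ge 0$, which is vacuous; combined with convergence of masses it still does not preclude the standard mutual-singularity phenomenon (compare $\delta_{1/i}\rightharpoonup\delta_0$ with $\|\delta_{1/i}-\delta_0\|=2$). Since this step was meant to control the singular part of $D^lf$ on small sets -- the only genuinely $\BV$ difficulty, as your treatment of the a.c.\ part via $\H^{n-1}_\infty \Rightarrow \Le^n$-smallness is fine -- the proof has a real gap, not a routine one.

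The paper takes a different route, via the Coarea Formula (as indicated in its one-line proof). Writing $\|D^lf\|(U)\le\sum_{|\alpha|=l-1}\|D\partial^\alpha f\|(U)$ and using $\|Dw\|(A)=\int_\R \H^{n-1}\bigl(A\cap\partial^*\{w>t\}\bigr)\,\dd t$ for each $w=\partial^\alpha f\in\BV(\R^n)$, one sees that $\|D^lf\|$ vanishes on $\H^{n-1}$-null sets. Since $\H^{n-1}_\infty(A)=0$ iff $\H^{n-1}(A)=0$, a standard exhaustion argument then yields the $\varepsilon$--$\delta$ statement: if it failed, there would exist open $U_k$ with $\H^{n-1}_\infty(U_k)<2^{-k}$ but $\|D^lf\|(U_k)\ge\varepsilon$; the set $A=\bigcap_N\bigcup_{k\ge N}U_k$ then has $\H^{n-1}_\infty(A)=0$, hence $\|D^lf\|(A)=0$, contradicting $\|D^lf\|(A)\ge\limsup_k\|D^lf\|(U_k)\ge\varepsilon$ (reverse Fatou for the finite measure $\|D^lf\|$). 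This is the mechanism your decomposition was implicitly trying to shortcut; the coarea formula supplies the $\H^{n-1}$-absolute-continuity of the singular part for free, which no smooth approximation in norm can give.
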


\begin{proof}
It is an easy consequence of the Coarea Formula, and we leave
details to the interested reader (or see \cite[Lemma 2.4]{BKK}).
\end{proof}

\begin{rem}\label{remdor}
Using the methods of the proof of Lemma~2 in \cite{Dor}, one can
prove the following result. Let $u\in\BV_{k+1}(\R^n)$, $k+1\le n$.
Then for any $n$-dimensional interval $Q\subset\R^n$ and any point
$x\in\R^n$ with $\dist(x,Q)\le 9n\ell(Q)$  the estimates
\begin{equation}
\label{kkDor} \bigl|\nabla^kP_{Q,k}[u](x)\bigr|\le C(\M \nabla^k
u)(x),\ \quad\bigl|\nabla^mu(x)-\nabla^mP_{Q,k}[u](x)\bigr|\le
C\ell(I)^{k-m}(\M \nabla^k u)(x)
\end{equation}
hold for each $m \in \{ 0, \, \dots \, , \, k-1 \}$, where the
constant $C$ depends on $n$ only.
\end{rem}

\begin{proof}[Proof of Theorem~\ref{bv''}]
Fix $\varepsilon\in(0,1)$. Let $U$ be an open set such that
\begin{equation}\label{apbv0}\H^{n-1}_\infty(U)<\varepsilon,
\end{equation}
\begin{equation}\label{apbv-1}\|D^lf\|(U)<\varepsilon,
\end{equation}
\begin{equation}\label{apbv1}(\M
\nabla^{l-1}f)(x)\le C_\varepsilon\quad \forall x\in\R^n\setminus
U
\end{equation}
The existence of $U$ follows from~Theorem~\ref{lb7}, that remains
valid for $f \in \BV_{l}( \R^n )$ provided the $\LL^1$ norm is
replaced by the total variation norm (see \cite{Ad}), and
Lemma~\ref{bv-1}. Denote $F=\R^n\setminus U$. Take a Whitney cube
decomposition $U=\bigcup\limits_{j=1}^\infty{Q_j}$, where all
cubes $Q_j$ are dyadic, and select an associated smooth partition
of unity $\{\varphi_j\}_{j\in\mathbb N}$. Recall the standard
properties of $Q_j$, $\varphi_j$ (see \cite[Chapter VI]{St}\,):

\begin{itemize}

\item[(i)] \ $\diam(Q_j)\le \dist(Q_j,F)\le 4\diam(Q_j)<1$;

\item[(ii)] \ every point $x\in U$ is covered by at most
$N=(12)^n$ different cubes $Q^*_j$, where the cube $Q^*_j$ has the
same center as $Q_j$ and $\ell(Q^*_j)=\frac98\ell(Q_j)$;

\item[(iii)] \ for each $j\in\mathbb N$ \ \ $\supp
\varphi_j\subset Q^*_j\subset U$, \,moreover,
$|\nabla^m\varphi_j|\le C_m\ell(Q_j)^{-m}$ for all $m\in \mathbb
N$;

\item[(iv)] \ all $\varphi_j\ge0$ \ and \
$\sum\limits_{j=1}^\infty \varphi_j(x)\equiv 1$ \ on $U$.
\end{itemize}
Now we define the function $g\colon \R^n\to\R$ by

\begin{equation}\label{apbv2} g(x)=\left\{\begin{array}{rcl}
f(x),\quad x\in F; & \\
\sum\limits_{j=1}^\infty \varphi_j(x)P_{Q^*_j,l-1}[f](x), \ \ x\in
U.
\end{array}\right.
\end{equation}
Recall the following properties of the polynomials
$P_{Q^*_j,l-1}[f](x)$ (see~\cite[page 1034]{Dor}):
\begin{equation}\label{apbv3}
\begin{array}{rcl}
\int\limits_{Q^*_j}|\nabla^mf(z)-\nabla^mP_{Q^*_j,l-1}[f](z)|\,\dd
z\le C\ell(Q^*_j)^{l-m}\|D^lf\|(Q^*_j);\\
\|D^l(f-P_{Q^*_j,l-1}[f])\|(Q^*_j)= \|D^lf\|(Q^*_j),
\end{array}
\end{equation}
where $m \in \{ 0, 1, \, \dots \, , \, l-1 \}$. From these
properties and assumption~(\ref{apbv-1}) we get by direct
calculation for each $m=0,\dots, l-1$ the estimates
\begin{equation}\label{apbv4'`}
\sum\limits_{j=1}^\infty
\|\nabla^m(\varphi_j(f-P_{Q^*_j,l-1}[f]))\|_{\LL^1(Q^*_j)}\le
C\|D^lf\|(U)<C\varepsilon.
\end{equation}
Analogously,
\begin{equation}\label{apbv4''`} \sum\limits_{j=1}^\infty
\|D^l(\varphi_j(f-P_{Q^*_j,l-1}[f]))\|(Q^*_j)\le
C\|D^lf\|(U)<C\varepsilon.
\end{equation}

From the convergence of the above series and from the completeness
of the space $\BV_l(\R^n)$ it follows readily that
$f-g=\sum\limits_{j=1}^\infty
\varphi_j(f-P_{Q^*_j,l-1}[f])\in\BV_l(\R^n)$. Consequently,
\begin{equation}\label{apbv4'''}g\in\BV_l(\R^n)
\end{equation}
and
\begin{equation}\label{apbv4'}
\|f-g\|_{\BV_l}<C\varepsilon.
\end{equation}
Thus to finish the proof of the Theorem, it is sufficient to check
that
\begin{equation}\label{apbv10}
\|\nabla^{l-1}g\|_{\LL^\infty}<\infty.
\end{equation}
From~(\ref{apbv1}) by construction it follows that
\begin{equation}\label{apbv11}
\esssup_F|\nabla^{l-1}g|=\esssup_F|\nabla^{l-1}f|\le
C_\varepsilon.
\end{equation}
Now estimate $|\nabla^{l-1}g(y)|$ for $y\in U$. Let $y\in
Q_{j_0}$. Take $x\in F$ such that
$\dist(x,Q_{j_0})=\dist(F,Q_{j_0})$. Then
$C_0\ell(Q^*_j)\le|y-x|\le C_1\ell(Q^*_j)$ for each $Q^*_j\ni y$.
Consider the $(l-2)$-order Taylor polynomial
$$T(f,x;y)=\sum\limits_{|\beta|\le
l-2}\frac1{\beta!}\partial^{\beta}f(x)\cdot(y-x)^\beta.
$$
From assumption~(\ref{apbv1}) and Remark~\ref{remdor} (with
$k=l-1$) it follows that for arbitrary multi-index $\alpha$ with
$|\alpha|\le l-1$
\begin{eqnarray}
\nonumber|\partial^\alpha(P_{Q^*_j,l-1}[f](y)-T(f,x;y))|&\leq&
\bigl|\nabla^{l-1}
P_{Q^*_j,l-1}[f](x)\bigr|\cdot|x-y|^{l-1-\alpha}
\\
\nonumber && + \sum\limits_{|\beta|\le
l-2-|\alpha|}\frac1{\beta!}\bigl|\bigl(
\partial^{\alpha+\beta}
P_{Q^*_j,l-1}[f](x)-\partial^{\alpha+\beta}{f}(x)\bigr)\cdot(y-
x)^\beta\bigr|\\
 &\leq& C_2|x-y|^{l-1-|\alpha|}\le
C_3\ell(Q^*_j)^{l-1-|\alpha|}.\label{apbv13}
\end{eqnarray}
From the last estimate we have
\begin{eqnarray}
\nonumber |\nabla^{l-1}g(y)|&=&|\nabla^{l-1}(g(y)-T(f,x;y))|=
\bigl|\sum\limits_{j:Q^*_j\ni
y}\nabla^{l-1}\bigl(\varphi_j(y)(P_{Q^*_j,l-1}[f](y)-
T(f,x;y))\bigr)
\bigr| \\&& \le \sum\limits_{j:Q^*_j\ni y}\sum\limits_{m=0}^{l-1}|
\nabla^{l-1-m}\varphi_j(y)|
\cdot|\nabla^{m}(P_{Q^*_j,l-1}[f](y)-T(f,x;y))| \le C_4,\ \ \
\label{apbv15}
\end{eqnarray}
where the constant $C_4$ does not depend on $y\in U$. The last
estimate finishes the proof of the target
assertion~(\ref{apbv10}).
\end{proof}

Now we discuss the proof of Theorem~\ref{MS} in the $\BV$-case,
which is more delicate than the Sobolev case.

The assertion of the key Lemma~\ref{lb11} remains valid for $v\in
\BV_n(\R^n)$ with identical proof if we replace in its formulation
$\|\nabla^n v\|_{\LL^1(I)}$ by $\|D^nv\|(I)\sim\|D^n
v_{I,n-1}\|(\R^n)$. From the last fact using the standard covering
lemmas one can easily deduce the following:

\begin{lem}\label{bv1}{\sl  Let $v\in \BV_n(\R^n)$. Then for each
$\varepsilon>0$ there exists $\delta>0$ such that for any Borel
set $E\subset\R^n$  the estimate $\H^1(v(Z_v\cap E))\le
C\|D^nv\|(E)$ holds, where $C$ does not depend on~$E,v$.}
\end{lem}

From this Lemma and from Lemma~\ref{bv-1} we infer easily the

\begin{cor}
\label{corbv2} {\sl  Let $v\in \BV_n(\R^n)$. Then for each
$\varepsilon>0$ there exists $\delta>0$ such that for any Borel
set $E\subset\R^n$  we have that if $\H_\infty^{n-1}(E)<\delta$
then $\H^1(v(Z_v\cap E))\le \varepsilon$. In particular,
$\H^1(v(Z_v\cap E))=0$ whenever $\H^{n-1}(E)=0$.}
\end{cor}

We need a more refined version of Lemma~\ref{lb3} in the $\BV$
case.

\begin{lem}\label{lb3'}{\sl
Suppose $v\in\BV_n(\R^n)$ and $S\subset \R^n$ is an
$(n-1)$-dimensional $\CC^1$-smooth compact manifold (without
boundary). Then there exist $\delta=\delta(S)>0$ such that for any
ball $B=B(z,r)$ with $z\in S$ and $r<\delta$  the estimates
\begin{equation}
\label{bvf1} \sup\limits_{y\in \bar
B_+}|v(y)-P_{B_+,n-1}[v](y)|\le C \|D^nv\|(B_+),
\end{equation}
\begin{equation}
\label{bvf2} \sup\limits_{y\in \bar
B_-}|v(y)-P_{B_-,n-1}[v](y)|\le C \|D^nv\|(B_-)
\end{equation}
hold, where $C$ depends on $n$ only, $B_+,B_-$ are the connected
components of the open set $B\setminus S$, and the polynomials
$P_{B_\pm,n-1}[v]$ are defined by formula~(\ref{0}) with $I$
replaced by~$B_{\pm}$, respectively. Moreover, each function
$v_{B_\pm}(y)=v(y)-P_{B_\pm,n-1}[v](y)$, $y\in B_\pm$, can be
extended from~$\bar B_\pm$ to the whole of $\R^n$ such that
$v_{B_\pm}\in\BV_n(\R^n)$ and
\begin{equation}
\label{0'} \|D^nv_{B_\pm}\|(\R^n)\le C_0 \|D^nv\|(B_\pm),
\end{equation}
 where $C_0$ also depends on $n$ only.
}\end{lem}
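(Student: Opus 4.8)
Lemma~\ref{lb3'} is the near-boundary analogue of Lemma~\ref{lb3}, with the cube $I$ replaced by a component $B_\pm$ of $B\setminus S$ for $B$ a small ball centred on the $\CC^1$ manifold $S$. The essential point is that for $r$ small enough (depending on $S$) the domains $B_+$ and $B_-$ are bi-Lipschitz images of a half-ball, with bi-Lipschitz constants \emph{uniformly} bounded over $z\in S$ and $r<\delta(S)$; this is where the $\CC^1$-smoothness and compactness of $S$ enter. Granting this, one transfers the three ingredients used in the proof of Lemma~\ref{lb3} --- the Sobolev sup-estimate $\sup|u|\le c\|u\|_{\WW^{n,1}}$, the Poincaré-type inequality $\|u\|_{\WW^{n,1}}\le c(\|P_{B_\pm,n-1}[u]\|_{\LL^1(B_\pm)}+\|\nabla^{n-1}u\|_{\LL^1}+\|\nabla^n u\|_{\LL^1})$, and the $\WW^{n,1}$ (here $\BV_n$) extension operator --- from the half-ball to $B_\pm$ with constants depending only on $n$.

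\textbf{Step 1 (geometry of $B\setminus S$).} Since $S$ is a compact $\CC^1$ manifold without boundary in $\R^n$, it has a tubular neighbourhood and a uniform ``reach''-type radius $\delta(S)>0$: for $z\in S$ and $r<\delta(S)$, after a rotation the set $S\cap B(z,r)$ is the graph of a $\CC^1$ function $\psi$ over the tangent hyperplane with $\psi(z')=0$, $\nabla\psi(z')=0$ at the base point, and $\|\nabla\psi\|_{\LL^\infty}$ as small as we like (uniformly in $z$, by compactness and uniform continuity of the Gauss map). Hence the map $(y',y_n)\mapsto(y',y_n-\psi(y'))$ straightens $S\cap B$ to a piece of hyperplane and carries $B_+$, $B_-$ onto domains trapped between two half-balls of comparable radii; composing with a further radial bi-Lipschitz map one arrives at a genuine half-ball. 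All the maps involved, together with their inverses, have Lipschitz constants bounded by an absolute constant once $\|\nabla\psi\|_{\LL^\infty}\le 1/2$, say.

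\textbf{Step 2 (transfer of the estimates).} On a fixed half-ball the estimate $\sup|u|\le c\|u\|_{\WW^{n,1}}$, the Poincaré inequality with the orthogonal-projection polynomial, and a bounded $\WW^{n,1}$-extension operator are classical (they follow, as in Lemma~\ref{lb3}, from~\cite[\S1.1.15]{M}, and extend to $\BV_n$ by approximation via Lemma~\ref{bv}). Pulling these back through the uniformly bi-Lipschitz change of variables from Step~1 yields the same statements on $B_\pm$ with constants depending only on $n$. Taking $u=v-P_{B_\pm,n-1}[v]$ makes the polynomial term vanish by the defining relation~(\ref{0}), and one is left precisely with~(\ref{bvf1})--(\ref{bvf2}); note that the lower-order term $\|\nabla^{n-1}v\|_{\LL^1(B_\pm)}$ that appears a priori can be absorbed, since after subtracting $P_{B_\pm,n-1}[v]$ one may apply the Poincaré inequality once more to bound $\|\nabla^{n-1}(v-P_{B_\pm,n-1}[v])\|_{\LL^1(B_\pm)}$ by $C\|D^nv\|(B_\pm)$, as in Remark~\ref{remdor}. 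Finally, the $\BV_n$-extension of $v_{B_\pm}$ from $\bar B_\pm$ gives~(\ref{0'}) with $C_0=C_0(n)$.

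\textbf{Main obstacle.} The one genuinely non-routine point is Step~1: securing a single radius $\delta(S)$ and \emph{uniform} bi-Lipschitz constants for the straightening maps over all base points $z\in S$. This is exactly where compactness of $S$ and the $\CC^1$ (hence uniformly continuous normal field) hypothesis are indispensable; without uniformity the constant $C$ in~(\ref{bvf1})--(\ref{bvf2}) would depend on $z$ and the Lemma would be useless for the covering argument in the application. Everything after the uniform straightening is a routine change-of-variables transplantation of the flat-domain Sobolev/$\BV$ inequalities already invoked in Lemma~\ref{lb3}.
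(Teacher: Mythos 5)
Your Step~1 --- securing uniform control of the Lipschitz character of $B_\pm$ over all $z\in S$ and $r<\delta(S)$, via compactness of $S$ and uniform continuity of its normal field --- correctly identifies the geometric input the lemma needs, and it is the same input the paper relies on. The gap is in Step~2. A bi-Lipschitz (indeed even a $\CC^1$) change of variables does \emph{not} preserve Sobolev or $\BV$ seminorms of order $\ge 2$: with $\Phi(y',y_n)=(y',y_n-\psi(y'))$ and $\psi$ only $\CC^1$, the second derivative $\nabla^2(u\circ\Phi)$ already involves $\nabla^2\psi$, which is undefined, and more generally $\|\nabla^n(u\circ\Phi)\|_{\LL^1}$ cannot be controlled by $\|\nabla^n u\|_{\LL^1}$ without $\Phi\in\CC^{n-1,1}$. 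Since the section treats $n\ge 3$, you cannot ``pull back'' the half-ball sup-estimate, Poincar\'e inequality and $\WW^{n,1}$/$\BV_n$ extension to $B_\pm$ through your straightening map; the transplantation in Step~2 simply fails at order two and above.

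The paper avoids any straightening. After rescaling $B$ to unit size (the estimate is scale-invariant, as in the proof of Lemma~\ref{lb3}), it applies Stein's extension theorem for domains with minimally smooth (Lipschitz) boundary (\cite[Chapter~VI, \S3.2, Theorem~5']{St}), which extends a function directly from the Lipschitz domain $B_\pm$ to all of $\R^n$, boundedly in $\WW^{k,1}$ for \emph{every} integer order $k$, with operator norm depending only on $n$ and the Lipschitz constant of the boundary --- no further boundary smoothness and no change of variables are used. Your uniform-Lipschitz observation from Step~1 is exactly what makes that extension constant uniform in $z$ and $r$, so the repair is to replace the pullback argument of Step~2 by a direct invocation of Stein's extension theorem on $B_\pm$ itself, then run the remainder of the proof of Lemma~\ref{lb3} (Sobolev embedding and the Poincar\'e-type inequality from \cite[\S1.1.15]{M}, extended to $\BV_n$ via Lemma~\ref{bv}).
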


The proof of this lemma is similar to the proof of
Lemma~\ref{lb3'} with the following addition: we must apply the
advanced version of Sobolev Extension Theorem from bounded
Lipschitz domains to the whole of $\R^n$ with the estimate of the
norm of the extension operator depending on $n$ and on the
Lipschitz constant of the domain only (see~\cite[Chapter VI, \S
3.2, Theorem~5']{St}\,).

From Lemmas~\ref{lb3'} and  \ref{lb11} (more precisely, from its
proof), we have

\begin{cor}\label{bv5}
{\sl Suppose $v\in\BV_n(\R^n)$ and $S\subset \R^n$ is an
$(n-1)$-dimensional $\CC^1$-smooth compact manifold (without
boundary). Then there exist $\delta=\delta(S)>0$ such that for any
ball $B=B(z,r)$ with $z\in S$ and $r<\delta$  the estimate
\begin{equation}
\label{bvf3} \H^1(v(Z_v\cap B\cap S))\le C \|D^nv\|(B_+),
\end{equation}
holds, where $C$ depends on $n$ only and $B_+$ is a connected
component of the open set $B\setminus S$.}
\end{cor}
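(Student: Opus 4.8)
The plan is to deduce Corollary~\ref{bv5} by combining the localized estimate of Lemma~\ref{lb3'} with the scheme of proof of Lemma~\ref{lb11}, applied not to a dyadic interval but to one of the half-ball components $B_+$ of $B\setminus S$. First I would fix $S$ and take the $\delta=\delta(S)>0$ provided by Lemma~\ref{lb3'}, so that for every ball $B=B(z,r)$ with $z\in S$ and $r<\delta$ we have the extension $v_{B_+}=v-P_{B_+,n-1}[v]$ on $B_+$, with $v_{B_+}\in\BV_n(\R^n)$ and $\|D^nv_{B_+}\|(\R^n)\le C_0\|D^nv\|(B_+)$. Note that on $B_+$ the gradients agree: $\nabla v=\nabla v_{B_+}+\nabla P_{B_+,n-1}[v]$, so on $Z_v\cap B_+$ we have $\nabla P_{B_+,n-1}[v](x)=-\nabla v_{B_+}(x)$, exactly as in the dyadic case.

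Next I would run through the argument of Lemma~\ref{lb11} verbatim, replacing $I$ by $B_+$, replacing $\sigma=\|\nabla^nv_{I,n-1}\|_{\LL^1(\R^n)}$ by $\sigma=\|D^nv_{B_+}\|(\R^n)$, and using the $\BV$-version of Theorem~\ref{lb7} (with the $\LL^1$-norm of $\nabla^nv_{B_+}$ replaced by the total variation $\|D^nv_{B_+}\|(\R^n)$, as noted in the $\BV$ discussion above) to control $\sum_j\delta_j2^j\le C_1\sigma$, where $\delta_j=\H^1_\infty(E_j)$ and $E_j=\{x:(\M\nabla v_{B_+})(x)\in(2^{j-1},2^j]\}$. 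Covering each $E_j$ by balls $B_{ij}$ of radii $r_{ij}$ with $\sum_ir_{ij}\le3\delta_j$, setting $Z_{ij}=Z_v\cap B_+\cap S\cap E_j\cap B_{ij}$, and applying Yomdin's entropy estimate (Theorem~\ref{lb8}) to the polynomial $P_{B_+,n-1}[v]$ together with Corollary~\ref{lb10} to $v_{B_+}$ on each $B_{ij}$ with $\varepsilon=2^j$, one covers $v(Z_{ij})$ by $C_*$ intervals in $\R$ of radius $(1+C_{**})2^jr_{ij}$, yielding $\H^1(v(Z_{ij}))\le 2C_*(1+C_{**})2^jr_{ij}$. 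The residual set $Z_\infty$ where $\M\nabla v_{B_+}=\infty$ has $\H^1(Z_\infty)=0$, hence $\H^1(v(Z_\infty))=0$ by Theorem~\ref{th3.3}. Summing over $i,j$ gives $\H^1(v(Z_v\cap B_+\cap S))\le C'\sigma\le C'C_0\|D^nv\|(B_+)$.

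The one genuinely new point, and the step I expect to require the most care, is that $Z_v\cap B\cap S$ is being approximated through its intersection with the open half-ball $B_+$ rather than with a full cube: a priori a point of $S$ lies on the topological boundary of both $B_+$ and $B_-$, so I must argue that the relevant covering and maximal-function estimates are insensitive to this. Here one uses that $\M\nabla v_{B_+}$ is defined on all of $\R^n$ (because $v_{B_+}$ was extended to $\R^n$), that $v=v_{B_+}+P_{B_+,n-1}[v]$ continues to hold on $\bar B_+$ by the sup-estimate~(\ref{bvf1}) of Lemma~\ref{lb3'} (so $v$ is controlled up to and including $S$ on the $B_+$ side), and that the balls $B_{ij}$ are balls in $\R^n$, so Yomdin's and Corollary~\ref{lb10}'s estimates apply on $B_{ij}$ without reference to $B_+$. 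Thus the whole of $Z_v\cap B\cap S\subset\bar B_+$ is covered by the $Z_{ij}$ (plus the null set $Z_\infty$), and the summation goes through exactly as before. Finally, since only $B_+$ entered, the same bound with $B_-$ in place of $B_+$ holds as well; the statement records the $B_+$ version.
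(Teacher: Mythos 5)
Your plan is the one the paper indicates (Lemma~\ref{lb3'} combined with the scheme of Lemma~\ref{lb11}, with $B_{\pm}$ playing the role of the dyadic cube), and the bookkeeping with $E_j$, $\delta_j$, the Adams/Choquet bound, the covering balls $B_{ij}$ and the final summation is all set up correctly. But there is a genuine gap precisely at the step you yourself flag as ``the one genuinely new point,'' and the resolution you sketch does not actually close it. The argument of Lemma~\ref{lb11} needs, on the set $Z_{ij}$, both $(\M\nabla v_{B_+})(x)\le 2^j$ (which you get from $Z_{ij}\subset E_j$, and which feeds Corollary~\ref{lb10}) and $|\nabla P_{B_+,n-1}[v](x)|\le 2^j$ (which feeds Yomdin's entropy bound, Theorem~\ref{lb8}). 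You derive the needed identity $\nabla P_{B_+,n-1}[v](x)=-\nabla v_{B_+}(x)$ ``on $Z_v\cap B_+$,'' i.e.\ on the \emph{open} half-ball. But the target set $Z_v\cap B\cap S$ is disjoint from $B_+$: indeed $B_+\cap S=\emptyset$, so the set $Z_{ij}=Z_v\cap B_+\cap S\cap E_j\cap B_{ij}$ as you literally wrote it is empty, and after correcting it to $Z_v\cap B\cap S\cap E_j\cap B_{ij}$ every point of it lies on $\partial B_+$.

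At such a boundary point $x\in S$, the precise-representative gradient $\nabla v_{B_+}(x)=\lim_{r\to0}\dashint_{B(x,r)}\nabla v_{B_+}$ averages over both sides of $S$ and therefore involves the (Stein-type) extension of $v_{B_+}$ outside $\overline{B_+}$, which is \emph{not} constrained to equal $v-P_{B_+,n-1}[v]$. So the pointwise identity, and in particular the inequality $|\nabla P_{B_+,n-1}[v](x)|\le C(\M\nabla v_{B_+})(x)$ that makes Yomdin applicable, is not automatic on $S$. The three observations you offer in response (that $\M\nabla v_{B_+}$ is defined on all of $\R^n$; that $v=v_{B_+}+P_{B_+,n-1}[v]$ holds on $\overline{B_+}$; that the $B_{ij}$ are full balls) are all true but none of them yields the required bound on $|\nabla P_{B_+,n-1}[v](x)|$. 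What is actually needed is a one-sided argument: e.g.\ use that $x\in Z_v$ gives Fr\'{e}chet differentiability of $v$ with $\nabla v(x)=0$, hence one-sided first-order expansion of $v_{B_+}$ into $B_+$ with linear part $-\nabla P_{B_+,n-1}[v](x)\cdot(y-x)$, and then combine this with a one-sided analogue of Lemma~\ref{lb9'} exploiting that for $r<\delta$ (with $\delta$ as in Lemma~\ref{lb3'}, so that $S$ is nearly flat at scale $r$) the set $B(x,\rho)\cap B_+$ occupies a fixed fraction of $B(x,\rho)$. Without some such step the key inclusion $Z_{ij}\subset\{x\in B_{ij}:|\nabla P_{B_+,n-1}[v](x)|\le 2^j\}$ is unjustified and the Yomdin count does not apply.
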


The next lemma follows from the elementary observation that for
any finite measure $\mu$ we have that $\mu \bigl( \{ x \in \R^n :
\, 0< \dist (x,S)< \varepsilon \} \bigr) \to 0$ as $\varepsilon
\searrow 0$.

\begin{lem}\label{bv6}{\sl
Suppose $v\in\BV_n(\R^n)$ and $S\subset \R^n$ is an
$(n-1)$-dimensional $\CC^1$-smooth compact manifold (without
boundary). Then for any $\varepsilon>0$ there exists a finite
family of balls $B^j=B(z_j,r_j)$, $j=1,\dots,N$, such that $z_j\in
S$, $r_j<\varepsilon$, and
$$S\subset\bigcup\limits_{j=1}^NB^j,\quad\
\sum\limits_{j=1}^N\|D^nv\|(B^j_+)<\varepsilon.$$}
\end{lem}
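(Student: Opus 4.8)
The plan is to combine the elementary fact recalled just before the statement with a cover of $S$ by balls of \emph{bounded overlap}. Write $\mu=\|D^nv\|$; by the definition of $\BV_n(\R^n)$ this is a finite positive Radon measure on $\R^n$. Since $S$ is compact it is closed, so $\dist(x,S)=0$ holds exactly when $x\in S$; hence the open sets $U_\rho=\{x\in\R^n:0<\dist(x,S)<\rho\}$ decrease as $\rho\searrow0$ and satisfy $\bigcap_{\rho>0}U_\rho=\emptyset$ (a point of $S$ is excluded by the strict inequality $0<\dist(x,S)$, while a point $x\notin S$ has $\dist(x,S)>0$ and so leaves $U_\rho$ once $\rho\le\dist(x,S)$). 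By continuity from above of the finite measure $\mu$ it follows that $\mu(U_\rho)\to0$ as $\rho\searrow0$.

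Now fix $\varepsilon>0$. First I would pick $\rho\in(0,\varepsilon)$ so small that $\mu(U_\rho)<\varepsilon/N_0$, where $N_0=N_0(n)$ is the dimensional overlap constant below (and, if convenient for the subsequent application of Corollary~\ref{bv5}, one may also take $\rho<\delta(S)$). Next, let $\{z_1,\dots,z_N\}$ be a maximal $\tfrac\rho3$-separated subset of the compact set $S$, which is finite, and set $B^j=B(z_j,\rho)$. By maximality the concentric balls $B(z_j,\tfrac\rho3)$ already cover $S$, hence so do the $B^j$; and since the $z_j$ are $\tfrac\rho3$-separated, a routine volume-packing estimate produces a purely dimensional constant $N_0(n)$ with $\sum_{j=1}^N\mathbf 1_{B^j}\le N_0(n)$ pointwise on $\R^n$ (the same conclusion could instead be quoted from the Besicovitch covering theorem). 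Because $z_j\in S$, each $x\in B^j$ satisfies $\dist(x,S)\le|x-z_j|<\rho$, which together with $\dist(x,S)>0$ for $x\notin S$ gives $B^j\setminus S\subset U_\rho$; in particular $B^j_+\subset B^j\setminus S\subset U_\rho$ for the connected component $B^j_+$ of $B^j\setminus S$.

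It then remains only to sum, and the overlap bound does the work: using $B^j_+\subset B^j\setminus S$ and $\sum_{j=1}^N\mathbf 1_{B^j\setminus S}\le N_0(n)\,\mathbf 1_{U_\rho}$,
\begin{equation*}
\sum_{j=1}^N\|D^nv\|(B^j_+)\le\sum_{j=1}^N\mu(B^j\setminus S)
=\int_{\R^n}\sum_{j=1}^N\mathbf 1_{B^j\setminus S}\,\dd\mu
\le N_0(n)\,\mu(U_\rho)<\varepsilon,
\end{equation*}
while $r_j=\rho<\varepsilon$ and $S\subset\bigcup_{j=1}^N B^j$ by construction, which is exactly the assertion. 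The one point that genuinely needs care is the bounded-overlap step: a careless finite cover of the $(n-1)$-dimensional set $S$ may involve enormously many balls and make $\sum_j\|D^nv\|(B^j_+)$ large, so it is essential that the cover carry a purely dimensional bound on its multiplicity, which is then absorbed by the smallness of $\mu(U_\rho)$ secured in the first step; everything else is routine.
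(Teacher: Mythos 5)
Your proof is correct and follows exactly the route the paper hints at: the whole content of the lemma is the continuity-from-above observation that $\mu(U_\rho)\to 0$ as $\rho\searrow 0$, and you have simply filled in the bounded-overlap covering detail (maximal $\rho/3$-separated net plus a volume-packing multiplicity constant $N_0(n)$) that the paper leaves implicit but which is genuinely needed to control the sum over overlapping balls. One small sharpening: taking $\rho<\delta(S)$ is not merely convenient but necessary for $B^j_+$ to be well-defined as one of the two connected components of $B^j\setminus S$ in the sense of Lemma~\ref{lb3'}, though this changes nothing in the argument.
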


Combining these results we find the

\begin{cor}\label{corbv8}{\sl
Suppose $v\in\BV_n(\R^n)$ and $S\subset \R^n$ is an
$(n-1)$-dimensional $\CC^1$-smooth compact manifold. Then
$\H^1(v(Z_v\cap S))=0$.}
\end{cor}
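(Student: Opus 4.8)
The plan is to deduce the global assertion from the local one-sided estimate of Corollary~\ref{bv5} by a straightforward covering argument, the crucial input being the existence of a \emph{good} finite cover of $S$ supplied by Lemma~\ref{bv6}.

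Concretely, I would fix $\varepsilon>0$, let $\delta=\delta(S)>0$ be the threshold radius furnished by Corollary~\ref{bv5}, and apply Lemma~\ref{bv6} with the parameter $\min(\varepsilon,\delta)$ in place of $\varepsilon$. This produces finitely many balls $B^j=B(z_j,r_j)$, $j=1,\dots,N$, with $z_j\in S$, $r_j<\min(\varepsilon,\delta)$, $S\subset\bigcup_{j=1}^N B^j$ and $\sum_{j=1}^N\|D^nv\|(B^j_+)<\varepsilon$, where $B^j_+$ is a connected component of $B^j\setminus S$. Since $r_j<\delta$ for every $j$, Corollary~\ref{bv5} is applicable to each $B^j$ and yields $\H^1(v(Z_v\cap B^j\cap S))\le C\|D^nv\|(B^j_+)$ with a constant $C=C(n)$ independent of $j$. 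The cover property gives $Z_v\cap S\subset\bigcup_{j=1}^N(Z_v\cap B^j\cap S)$, so subadditivity of the Hausdorff measure $\H^1$ yields
$$\H^1(v(Z_v\cap S))\le\sum_{j=1}^N\H^1(v(Z_v\cap B^j\cap S))\le C\sum_{j=1}^N\|D^nv\|(B^j_+)<C\varepsilon,$$
and letting $\varepsilon\searrow 0$ completes the argument.

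Since all the analytic content has already been packaged into Lemmas~\ref{lb3'}, \ref{lb11}, \ref{bv6} and Corollary~\ref{bv5}, I do not anticipate a genuine obstacle here; the one point that has to be handled with a little care is the bookkeeping of constants and radii — choosing the radius bound in the application of Lemma~\ref{bv6} to be below the threshold $\delta(S)$ of Corollary~\ref{bv5}, so that the local estimate is legitimately available on \emph{every} ball of the cover, and observing that the constant $C$ in that estimate depends on $n$ only and is therefore uniform over the finite family.
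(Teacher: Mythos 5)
Your proposal is correct and is essentially what the paper intends: the paper merely writes ``Combining these results we find the [corollary],'' and your argument — apply Lemma~\ref{bv6} with parameter $\min(\varepsilon,\delta(S))$ so that Corollary~\ref{bv5} is legitimately available on every ball of the resulting finite cover, then sum the local estimates and let $\varepsilon\searrow0$ — is the intended bookkeeping. The only point worth noting is that the choice of the component $B^j_+$ in the two statements must be made consistently, but this is harmless since the estimate of Corollary~\ref{bv5} holds for either component and Lemma~\ref{bv6} controls both.
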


Recall, that a set $K\subset\R^n$ is called $(n-1)$-rectifiable,
if there exists an at most countable family  of $\CC^1$--surfaces
$S_i\subset\R^n$ of dimension~$(n-1)$ such that
$\H^{n-1}\biggl(K\setminus \bigcup\limits_i S_i\biggr)=0.$

We can therefore reformulate the Corollaries~\ref{corbv8},
\ref{corbv2} in the following form.

\begin{cor}\label{corbv9}{\sl
Suppose $v\in\BV_n(\R^n)$ and $K\subset\R^n$ is an
$(n-1)$-rectifiable set. Then $\H^1(v(Z_v\cap K))=0$.}
\end{cor}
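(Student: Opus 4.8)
The plan is to split an arbitrary $(n-1)$-rectifiable set $K$ into two pieces — the part covered by countably many $\CC^1$-surfaces and the negligible remainder — and to handle each by a result already established. First I would recall the definition: there is an at most countable family of $(n-1)$-dimensional $\CC^1$-surfaces $S_i\subset\R^n$ with $\H^{n-1}\bigl(K\setminus\bigcup_i S_i\bigr)=0$. Write $K_0=K\setminus\bigcup_i S_i$; then $\H^{n-1}(K_0)=0$, so by Corollary~\ref{corbv2} we have $\H^1(v(Z_v\cap K_0))=0$.

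For the remaining part, I would first reduce each surface $S_i$ to a countable union of compact pieces to match the hypothesis of Corollary~\ref{corbv8}. Since an $(n-1)$-dimensional $\CC^1$-surface is $\sigma$-compact and can be covered by countably many compact $(n-1)$-dimensional $\CC^1$-submanifolds without boundary (e.g. images of closed balls under the $\CC^1$ chart maps, after a routine shrinking/doubling argument, or simply a countable union of compact pieces each contained in a single chart and embeddable in a compact boundaryless $\CC^1$ manifold), I may assume that $\bigcup_i S_i=\bigcup_m T_m$ where each $T_m$ is an $(n-1)$-dimensional $\CC^1$-smooth compact manifold without boundary. Applying Corollary~\ref{corbv8} to each $T_m$ gives $\H^1(v(Z_v\cap T_m))=0$ for every $m$.

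Finally I would assemble the pieces using countable subadditivity of $\H^1$. We have
\begin{equation*}
Z_v\cap K\subset (Z_v\cap K_0)\cup\bigcup_m (Z_v\cap T_m),
\end{equation*}
hence
\begin{equation*}
\H^1(v(Z_v\cap K))\le \H^1(v(Z_v\cap K_0))+\sum_m \H^1(v(Z_v\cap T_m))=0,
\end{equation*}
which is the desired conclusion.

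The only genuinely non-routine point is the reduction of an arbitrary $(n-1)$-dimensional $\CC^1$-surface to a countable union of compact boundaryless $\CC^1$-manifolds, so that Corollary~\ref{corbv8} applies verbatim; but this is a standard fact about $\CC^1$-surfaces (cover by countably many coordinate patches, shrink each patch to a compact piece, and realize each compact piece as a submanifold of a slightly larger closed $\CC^1$ hypersurface), and involves no analysis. Everything else is an immediate consequence of the corollaries already proved, together with countable subadditivity of Hausdorff measure.
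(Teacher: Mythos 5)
Your proof is correct and follows essentially the same route the paper has in mind: the authors explicitly state that Corollary~\ref{corbv9} is a ``reformulation'' of Corollaries~\ref{corbv8} and~\ref{corbv2}, i.e.\ exactly the decomposition $K=(K\setminus\bigcup_i S_i)\cup\bigcup_i S_i$, applying Corollary~\ref{corbv2} to the $\H^{n-1}$-null remainder and Corollary~\ref{corbv8} surface by surface, and then summing with countable subadditivity. The one point you treat more carefully than the paper is the passage from the $\CC^1$-surfaces $S_i$ appearing in the definition of rectifiability to the \emph{compact boundaryless} $\CC^1$-manifolds required by Corollary~\ref{corbv8}; the paper silently assumes this reduction, and your chart-by-chart shrinking argument (embedding each compact patch into a slightly larger closed $\CC^1$-hypersurface) is the standard and correct way to fill that small gap.
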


The following fact is well-known.

\begin{ttt}[see \cite{Dor}, Theorems~B and 1]\label{Dor2}
{\sl Suppose that $v\in \BV_{n}(\R^n)$. Then there exists a
decomposition $\R^n=K_v\cup G_v$  with the following properties:
\begin{itemize}

\item[(i)] \ $K_v$ is $(n-1)$-rectifiable;

\item[(ii)] \ each $x\in G_v$ is a Lebesgue point for
$\nabla^{n-1}v$, moreover, $\nabla^{k-2}v$ is differentiable at
$x$ in the following integral sense:
\begin{equation}
\label{llc0}\dashint_{B(x,r)}\bigl|\nabla^{n-2}v(y)-\nabla^{n-2}v(x)-\nabla^{n-1}v(x)\cdot(y-x)\bigr|\,dy=o(r)\quad\mbox{
as } r \searrow 0.\end{equation}
\end{itemize}}
\end{ttt}

Now we are able to prove the following main result:

\begin{ttt}\label{MSBV}{\sl
Suppose $v\in\BV_n(\R^n)$. Then $\H^1(v(Z_v))=0$.}
\end{ttt}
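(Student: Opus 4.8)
The plan is to combine the rectifiable--set estimate with the smooth approximation on the ``good'' part, exactly mirroring how Theorem~\ref{MS} was deduced from Corollaries~\ref{Th_ap2} and~\ref{cor00}, but now using the $\BV_n$ substitutes assembled above. First I would invoke Corollary~\ref{corbv10} to fix a decomposition $\R^n = K_v \cup G_v$ where $K_v$ is $(n-1)$-rectifiable and $G_v = \bigcup_j G_j$ is an at most countable union of compact sets, on each of which $v$ agrees with some $g_j \in \CC^{n-1}(\R^n)$ together with all derivatives up to order $n-1$. Since $Z_v \cap G_v = \bigcup_j (Z_v \cap G_j)$, and a countable union of null sets is null, it suffices to prove $\H^1(v(Z_v \cap G_j)) = 0$ for each fixed $j$, together with $\H^1(v(Z_v \cap K_v)) = 0$ for the rectifiable part.

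The rectifiable part is already done: Corollary~\ref{corbv9} gives $\H^1(v(Z_v \cap K_v)) = 0$ immediately, since $K_v$ is $(n-1)$-rectifiable. For the good part, fix $j$ and write $g = g_j$, $G = G_j$. On $G$ we have $v = g$ and $\nabla v = \nabla g$, so $Z_v \cap G \subset \{x \in G : \nabla g(x) = 0\} \subset Z_g$, the critical set of the $\CC^{n-1}$ function $g$. Here is the one genuine subtlety: the classical Morse--Sard theorem as quoted in the Introduction requires $\CC^k$ smoothness with $k \ge \max(n-m+1,1) = n$ for $m=1$, whereas $g$ is only $\CC^{n-1}$. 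So I cannot simply quote Morse--Sard for $g$. The remedy is to argue as in the proof of Lemma~\ref{lb11} and Corollary~\ref{Th_ap2}: either further refine $g$ on $G$ using the approximation machinery, or — more directly — observe that we may apply Corollary~\ref{Th_ap2} itself in the $\BV$ setting (its $\BV_n$ analog, which holds because Theorem~\ref{Th_ap} for $k=n$ is available for $\BV_l$, $l=n$, $k=l$, as noted in the text). Concretely, the $\BV_n$-version of Corollary~\ref{Th_ap2} yields a set $Z_{0,v}$ with $\Le^n(Z_{0,v}) = 0$ such that $\H^1(v(Z_v \setminus Z_{0,v})) = 0$, and then Lemma~\ref{bv1} together with Lemma~\ref{bv-1} (equivalently Corollary~\ref{corbv2}) handles $v(Z_v \cap Z_{0,v})$, since $\|D^nv\|(Z_{0,v})$ can be made small on a Lebesgue-null set of small $\H^{n-1}_\infty$-content.

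Thus the cleanest route, avoiding any re-derivation, is the following three-line assembly. By the $\BV_n$-analog of Corollary~\ref{Th_ap2}, there is a set $Z_{0,v}$ with $\Le^n(Z_{0,v})=0$ and $\H^1(v(Z_v \setminus Z_{0,v})) = 0$, so $\H^1(v(Z_v)) = \H^1(v(Z_v \cap Z_{0,v}))$. Since $\Le^n(Z_{0,v}) = 0$ we have $\H^n_\infty(Z_{0,v}) = 0$, and by Lemma~\ref{bv-1} (applied as in the proof of Corollary~\ref{corbv2}), $\H^1(v(Z_v \cap Z_{0,v})) = 0$. Combining, $\H^1(v(Z_v)) = 0$. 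The main obstacle to watch is precisely the smoothness gap in the classical Morse--Sard input: one must be careful that the $k=n$ case of the approximation Theorem~\ref{Th_ap}, which supplies a genuinely $\CC^n$ function off a small set, really does transfer to $\BV_n$ — and the excerpt asserts exactly this ($k=l=n$ following from the results of \cite{Dor} and \cite{Liu}), so the argument closes. $\square$
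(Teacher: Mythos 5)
There is a genuine gap, and it occurs at the very point you flagged as the ``one subtlety.'' Your final assembly reads: ``Since $\Le^n(Z_{0,v})=0$ we have $\H^n_\infty(Z_{0,v})=0$, and by Lemma~\ref{bv-1} (applied as in the proof of Corollary~\ref{corbv2}), $\H^1(v(Z_v\cap Z_{0,v}))=0$.'' But Lemma~\ref{bv-1} (and Corollary~\ref{corbv2}) require the exceptional set to have small $\H^{n-1}_\infty$-content, not small $\H^{n}_\infty$-content. A Lebesgue-null set can have positive, even infinite, $\H^{n-1}$-measure (a hyperplane, for instance), so $\Le^n(Z_{0,v})=0$ gives you no control whatsoever over $\H^{n-1}_\infty(Z_{0,v})$, and hence no control over $\|D^nv\|(Z_{0,v})$. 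This is not a technicality: the whole difficulty of $\BV_n$ versus $\WW^{n,1}$ is precisely that $D^nv$ may have a singular part concentrated on a Lebesgue-null set. In the Sobolev case, absolute continuity of $\nabla^n v\,\dd\Le^n$ is what made Corollary~\ref{cor3.4} (and hence Corollary~\ref{cor00}, the ``small $\H^n_\infty$ implies small image'' step) work; the paper explicitly warns that Corollary~\ref{cor3.4} \emph{fails} for general $v\in\BV_n(\R^n)$. Your proposal silently reintroduces exactly the estimate that was withdrawn.

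The paper's proof therefore takes a genuinely different route that never needs to control $\|D^nv\|$ on a merely Lebesgue-null set. After disposing of the $(n-1)$-rectifiable part $K_v$ via Corollary~\ref{corbv9}, it splits $Z_v\cap G_j$ into two pieces according to the higher Peano derivatives: (a) points where some $\nabla^mv\ne0$ for $2\le m\le n-1$ — these lie, by the Implicit Function Theorem applied to $g_j\in\CC^{n-1}$, in a countable union of $\CC^1$-hypersurfaces, hence are $(n-1)$-rectifiable, and Corollary~\ref{corbv8}/\ref{corbv9} again applies; and (b) points where $\nabla^{n-1}v=0$ — here the Coarea Formula gives $\|D^nv\|(\{x\in G_v:\nabla^{n-1}v(x)=0\})=0$, so Lemma~\ref{bv1} applies directly. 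Your plan to fall back on a $\BV_n$-version of Corollary~\ref{Th_ap2} plus Lemma~\ref{bv-1} collapses at the second step; to repair it you would need to show that $\|D^nv\|$ vanishes on the exceptional set produced by the approximation, and that requires an argument of precisely the coarea/rectifiability type the paper actually gives.
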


\begin{proof}
In view of Corollary \ref{corbv2} and Theorem~\ref{bv''} it is
sufficient to prove the target equality $\H^1(v(Z_v))=0$ only for
a case when $v\in\BV_n(\R^n)\cap\CC^{n-2,1}(\R^n)$, i.e.,
$v\in\CC^{n-2}(\R^n)$ and $\nabla^{n-2}v$ satisfies the Lipschitz
condition
\begin{equation}
\label{llc1} |\nabla^{n-2}v(y)-\nabla^{n-2}v(x)|\le
L|y-x|\quad\mbox{ for all }x,y\in \R^n
\end{equation}
and for some constant $L>0$. Consider the sets $K_v$, $G_v$ from
Theorem~\ref{Dor2}. In view of Corollary~\ref{corbv9} we have
\begin{equation}
\label{fbv2} \H^1(v(Z_v\cap K_v))=0.
\end{equation}
So we need only to prove that $\H^1(v(Z_v\cap G_v))=0$.

Take the decomposition (nondisjoint in general) \ $G_v=G_1\cup
G_2\cup G_3$, where
$$G_1=\{x\in G_v:\exists m=2,\dots,n-2\ \ \nabla^mv(x)\ne0\},$$
$$G_2=\{x\in G_v:\nabla^{n-1}v(x)=0\},$$
$$G_3=\{x\in G_v:\nabla v^{n-2}(x)=0, \ \
\nabla v^{n-1}(x)\ne 0\}.$$ Because of  Corollary~\ref{corbv8} and
the Implicit Function Theorem for smooth functions we have
\begin{equation}
\label{fbv3} \H^1(v(Z_v\cap
G_1))\le\sum_{m=2}^{n-2}\H^1\bigl(v\bigl(\{x\in G_v:\nabla
v(x)=\dots=\nabla^{m-1}v(x)=0,\ \
\nabla^mv(x)\ne0\}\bigr)\bigr)=0.
\end{equation}
On the other hand, by the Coarea Formula (see \cite{EG}) \ $
\|D^nv\|(G_2)=0$, \ hence by Lemma~\ref{bv1},
\begin{equation}
\label{fbv5} \H^1(v(Z_v\cap G_2))=0.
\end{equation}
Now estimate $v(Z_v\cap G_3)$.
 From the integral differentiability (\ref{llc0}) and the Lipschitz
 condition~(\ref{llc1}) it follows that $\nabla^{n-2}v$ is differentiable in
the classical sense for each $x\in G_v$, i.e.,
\begin{equation}
\label{llc3}\forall x\in G_v\quad\bigl|\nabla^{n-2}v(y)-
\nabla^{n-2}v(x)-\nabla^{n-1}v(x)\cdot(y-x)\bigr|=o(r)\quad\mbox{
as } r \searrow 0.\end{equation} Let \ ${\bf e}_i$, \
$i=1,\dots,n$, \ be the unit coordinate vectors of $\R^n$. Denote
$$E_{i,j,k}=\bigl\{x\in G_3:\bigl|\nabla v^{n-2}(x+t{\bf
e}_i)\bigr|\ge\frac1j|t|\ \mbox{ for all }
t\in\bigl[-\frac1k,\frac1k\bigr]\bigr\}.$$ By construction,
$$G_3=\bigcup\limits_{j,k\in\mathbb N,\,i=1,\dots,n}E_{ijk}.$$
It is easy to see (using the Lipschitz condition~(\ref{llc1})\,)
that locally each set $E_{ijk}$ is a graph of some Lipschitz
function of $(n-1)$ variables $(x_1,\dots,\widehat
x_i,\dots,x_n)$, i.e., $E_{ijk}$ is $(n-1)$-rectifiable. Then by
Corollary~\ref{corbv9} $\H^1(v(Z_v\cap E_{ijk}))=0$.
\end{proof}

 \noindent
School of Mathematics, Institute for Advanced Study, Einstein
Drive, Princeton N.J.~08540, USA\\
e-mail: {\it bourgain@ias.edu}
\bigskip

\noindent Sobolev Institute of Mathematics, Acad.~Koptyuga pr., 4,
and Novosibirsk State University, Pirogova Str. 2, 630090
Novosibirsk, Russia\\
e-mail: {\it korob@math.nsc.ru}

\bigskip

\noindent
Mathematical Institute, University of Oxford, 24--29 St.~Giles',
Oxford OX1 4AU, England\\
e-mail: {\it kristens@maths.ox.ac.uk}

\end{document}